\newtheorem{prop}{Proposition}
\newtheorem{lemma}{Lemma}
\newtheorem{thm}{Theorem}
\newtheorem{cor}{Corollary}
\theoremstyle{definition}
\newtheorem{defn}{Definition}
\DeclareMathOperator{\ord}{ord}
\DeclareMathOperator{\Res}{Res}
\DeclareMathOperator{\ordRes}{ordRes}
\DeclareMathOperator{\MinResLoc}{MinResLoc}
\DeclareMathOperator{\PGL}{PGL}
\DeclareMathOperator{\CPA}{CPA}
\DeclareMathOperator{\Bary}{Bary}
\DeclareMathOperator{\supp}{supp}
\DeclareMathOperator{\GL}{GL}
\DeclareMathOperator{\diam}{\textrm{diam}}
\DeclareMathOperator{\RP}{RP}
\DeclareMathOperator{\GPR}{GPR}
\DeclareMathOperator{\diamG}{diam_{\zetaG}}
\newcommand{\pberk}{\textbf{P}^1_{\textrm{K}}}
\newcommand{\hberk}{\textbf{H}^1_{\textrm{K}}}
\newcommand{\aberk}{\textbf{A}^1_{\textrm{K}}}
\newcommand{\zetaG}{\zeta_{\text{G}}}
\newcommand{\del}{\partial}
\newcommand{\vv}{\vec{v}}
\newcommand{\vw}{\vec{w}}
\newcommand{\B}{\textrm{B}}
\title{Quantitative Logarithmic Equidistribution of the Crucial Measures}
\date{\today}
\author{Kenneth Jacobs}
\begin{document}
\maketitle

\begin{abstract}
Let $K$ be a algebraically closed field of characteristic 0 that is complete with respect to a non-Archimedean absolute value. Let $\phi\in K(z)$ with $\deg(\phi)\geq 2$. In this paper we establish uniform logarithmic equidistribution of the crucial measures $\nu_{\phi^n}$ attached to the iterates of $\phi$. These measures were introduced by Rumely in his study of the Minimal Resultant Locus of $\phi$. Our equidistribution result comes from a bound on the diameter of points in $\supp(\nu_{\phi^n})$ that depends only on $n$ and $\phi$. We also show that the sets $\MinResLoc(\phi^n)$ are bounded independent of $n$, and we give an explicit bound for the radius of a ball about $\zetaG$ containing $\Bary(\mu_\phi)$. 
\end{abstract}

\section{Introduction}

Let $K$ be an algebraically closed field that is complete with respect to a non-Archimedean absolute value $\lvert \cdot \rvert$. Denote by $\mathcal{O}$ its ring of integers and $\mathfrak{m}$ its maximal ideal. Let $k$ denote its residue field. 

Given a rational map $\phi\in K(z)$ of degree $d\geq 2$, Rumely \cite{Ru1, Ru2} introduced a continuous piecewise affine function $\ordRes_{\phi}(x)$ defined on the Berkovich line $\pberk$ over $K$ that carries information about the resultant of $\PGL_2(K)$-conjugates of $\phi$. Restricting this function to a canonical tree $\Gamma_{\textrm{Fix, Repel}}\subseteq \hberk$ and taking the Laplacian gives rise to a measure $\nu_{\phi}$ called the \emph{crucial measure}. The crucial measure $\nu_\phi$ is a discrete probability measure which can be written $$\nu_\phi = \frac{1}{d-1} \sum_{P\in \pberk} w_\phi(P) \delta_P \ ,$$ where the weight functions $w_\phi: \pberk \to \mathbb{R}_{\geq 0}$ take integer values and are determined by the reduction of $\phi$ at $P$. The collection of points with $w_\phi(P)>0$ is called the crucial set of $\phi$.

The first main result of this paper gives a quantitative bound on the location of points with $w_{\phi^n}(P) >0$ for some $n$:

\begin{thm}\label{thm:bounds}
Assume that $K$ has characteristic 0 and residue characteristic $p\geq 0$. Let $\phi\in K(z)$ have degree $d\geq 2$, and let $\mathcal{L}_\phi$ denote the Lipschitz constant for the action of $\phi$ on $\mathbb{P}^1(K)$ with respect to the chordal metric, and let $\tilde{\mathcal{L}}_\phi>\mathcal{L}_\phi\geq 1$. There exists a constant $N_0 = N_0(\phi)$ depending only on $\phi$ so that if $n\geq N_0$ and $P\in \hberk$ is a point with $w_{\phi^n}(P)>0$, we have $$\rho(P, \zetaG) < 3n \log_v \tilde{\mathcal{L}}_\phi\ .$$
\end{thm}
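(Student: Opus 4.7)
The plan is to reduce the support bound to a statement about a specific tree on which $\nu_{\phi^n}$ lives, and then to use the chordal Lipschitz behavior of iteration to control the extremal vertices of that tree. Recall that $\nu_{\phi^n}$ is obtained by restricting $\ordRes_{\phi^n}$ to and taking the Laplacian on $\Gamma_{\textrm{Fix, Repel}}(\phi^n)\subset \hberk$, the convex hull of the classical fixed points of $\phi^n$ (i.e., the classical $n$-periodic points of $\phi$) together with the type-II repelling fixed points of $\phi^n$. Any $P$ with $w_{\phi^n}(P)>0$ must therefore lie on this tree, so it suffices to bound $\rho(P,\zetaG)$ for such $P$; by the tree geometry, this in turn follows from bounds on $\rho(\zetaG,V)$ (or, at classical endpoints $V$, on $-\log_v \|V,\zetaG\|_{ch}$) as $V$ ranges over the extremal vertices.

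The two cases to treat are thus classical periodic points and type-II repelling fixed points of $\phi^n$. For a classical $\alpha$ with $\phi^n(\alpha)=\alpha$, the $\mathcal{L}_\phi^n$-Lipschitz property of $\phi^n$ on $(\mathbb{P}^1(K),\|\cdot,\cdot\|_{ch})$, combined with the existence of at least one classical fixed point of $\phi$ at a $\phi$-dependent chordal distance from $\zetaG$, should yield $-\log_v \|\alpha,\zetaG\|_{ch} \leq n\log_v \mathcal{L}_\phi + O_\phi(1)$. For a type-II repelling fixed point $Q$ of $\phi^n$, I would conjugate by $g\in \PGL_2(K)$ carrying $\zetaG$ to $Q$ and analyze the chordal Lipschitz constant of $g^{-1}\phi^n g$: the chordal distortion of $g$ on the disk corresponding to $Q$ scales explicitly with $\rho(Q,\zetaG)$, and the fact that the reduction of $g^{-1}\phi^n g$ is non-constant of degree $\geq 2$ forces that distortion to be compatible with $\mathcal{L}_\phi^n$, giving a linear-in-$n$ bound on $\rho(Q,\zetaG)$.

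The main obstacle I expect is the type-II case: making the chordal-distortion-versus-Lipschitz-constant comparison precise is the technical core of the argument, and it is also where I expect the factor $3$ (rather than $1$) to emerge, since the chordal metric on a Berkovich disk distorts polynomially rather than linearly in its Berkovich distance to $\zetaG$, so that the best bound one can hope for is of the form $\rho(Q,\zetaG) \leq 3\,n\log_v \mathcal{L}_\phi + C(\phi)$, with $C(\phi)$ collecting the various $\phi$-dependent distortion factors.

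Combining these two extremal bounds with the tree structure of $\Gamma_{\textrm{Fix, Repel}}(\phi^n)$ gives $\rho(P,\zetaG) < 3n\log_v \mathcal{L}_\phi + C(\phi)$ for every $P\in \supp(\nu_{\phi^n})$. To conclude, one chooses $N_0 = N_0(\phi)$ so that $C(\phi) \leq 3n\bigl(\log_v \tilde{\mathcal{L}}_\phi - \log_v \mathcal{L}_\phi\bigr)$ whenever $n\geq N_0$; this is possible precisely because $\tilde{\mathcal{L}}_\phi > \mathcal{L}_\phi$ is strict, and it absorbs the additive constant into the gap, yielding the stated bound $\rho(P,\zetaG) < 3n\log_v \tilde{\mathcal{L}}_\phi$.
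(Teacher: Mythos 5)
There is a genuine gap at the very first step: the reduction of the problem to the extremal vertices of $\Gamma_{\textrm{Fix, Repel}}(\phi^n)$ does not work. It is true that every point with $w_{\phi^n}(P)>0$ lies on that tree, and it is true that for a point on a segment $[V_1,V_2]\subseteq\hberk$ one has $\rho(P,\zetaG)\leq\max(\rho(V_1,\zetaG),\rho(V_2,\zetaG))$; but the extremal vertices of this tree include the \emph{classical} (type I) $n$-periodic points, which are at infinite $\rho$-distance from $\zetaG$, and the tree contains branches running all the way down to them. A bound on some chordal quantity attached to those classical endpoints says nothing about how far down such a branch a weighted type II point can sit, and ruling out exactly that possibility is the entire content of the theorem. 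Concretely, the hard case is a repelling type II fixed point of $\phi^n$ (or a weighted branch point of $\Gamma^n_{\textrm{Fix}}$ moved by $\phi^n$) lying very deep inside a disc containing classical $n$-periodic points, with no pole of $\phi^n$ nearby; your tree-geometry reduction simply leaves this case untouched. The paper handles it in Propositions~\ref{prop:weightedremotebound} (Case 2B) and~\ref{prop:weightedbranchpt} by a completely different mechanism: the non-Archimedean Rolle theorem produces a critical point of $\phi^n$ in a slightly larger disc $D(0,r\gamma_p)$, the Przytycki-type Lemma~\ref{lem:critptfatou} shows that if $r$ were smaller than $B_\phi\gamma_p^{-1}\mathcal{L}_\phi^{-2(n-1)}$ this disc would lie in the Fatou set, and this contradicts the presence of a repelling (hence Julia) periodic point; this is also where characteristic $0$ and the $\frac{1}{p-1}$ term enter. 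The remaining cases (focused repelling points, shearing directions, a pole in the disc) are handled by the root--pole estimate $\RP(\phi^n)\geq\mathcal{L}_\phi^{-n}$ of Lemma~\ref{lem:gausspreimageest}, which is the correct way your ``Lipschitz lower-bounds distances'' intuition gets used: one needs a root and a pole (whose images are at definite chordal distance) inside the same small disc, and your sketch never produces such a pair.

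Your type II argument as stated also does not close the gap on its own terms: after conjugating by $g\in\PGL_2(K)$ with $g(\zetaG)=Q$, the chordal Lipschitz constant of $g^{-1}\phi^ng$ is not controlled by $\mathcal{L}_\phi^n$ (the conjugating map is not in $\GL_2(\mathcal{O})$, and its own chordal distortion grows with $\rho(Q,\zetaG)$, which is the unknown), and the mere fact that the reduction at $Q$ has degree at least $2$ does not by itself force any compatibility between that distortion and $\mathcal{L}_\phi^n$ --- one needs either a root--pole pair in the relevant direction or the critical-point/Fatou--Julia argument above. Two smaller points: the quantity $\|\alpha,\zetaG\|_{ch}$ for a classical point $\alpha$ is not defined (the chordal metric lives on $\mathbb{P}^1(K)$), and the case of potential good reduction must be treated separately (there the crucial set is a single point, bounded by $\frac{2}{d-1}\ordRes(\phi)$ as in Theorem~\ref{thm:boundoncrucialset}(A)). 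Finally, the factor $3$ is not forced by any cubic chordal distortion; in the paper it merely absorbs $\max\bigl(n\log_v\mathcal{L}_\phi,\ 2(n-1)\log_v\mathcal{L}_\phi-\log_vB_\phi+\tfrac{1}{p-1}\bigr)$ for $n$ large, using $\tilde{\mathcal{L}}_\phi>\mathcal{L}_\phi$ exactly as in your last step (which is the one part of the proposal that matches the paper).
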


The proof of Theorem~\ref{thm:bounds} relies on two technical tools, Lemmas~\ref{lem:gausspreimageest} and~\ref{lem:critptfatou} below. The first builds on a result of Rumely and Winburn \cite{RW} to gives a lower bound for the distance between a root and a pole of $\phi$. The second relies on a modified lemma of Przytycki (see \cite{Prz}, Lemma 1) and the non-Archimedean Rolle's theorem established by Faber (see \cite{XF2} Application 1). We remark that Rumely and Winburn \cite{RW} have also given an upper bound for the Lipschitz constant $\mathcal{L}_{\phi, \textbf{d}}$ of $\phi$ with respect to the small metric $\textbf{d}_{\pberk}$ on $\pberk$. They show that $$\mathcal{L}_{\phi, \textbf{d}} \leq \max\left(\frac{1}{|\Res(\Phi)|^d}, \frac{d}{|\Res(\Phi)|}\right)\ ,$$ where $\Phi$ is a normalized lift of $\phi$.

The second main result in this paper uses the bound in Theorem~\ref{thm:bounds} to establish that the measures $\nu_{\phi^n}$ satisfy a logarithmic equidistribution condition. Let $\mu_\phi$ denote the equilibrium measure of $\phi$ supported on the Berkovich Julia set of $\phi$ (see \cite{FRLErgodic}). In \cite{KJ}, the author showed that the integrals $\int fd\nu_{\phi^n}$ converge to $\int fd\mu_\phi$ for functions $f\in \mathcal{C}(\pberk)$ that are continuous on $\pberk$. Here, we strengthen this result by establishing the following:

\begin{thm}\label{thm:logeq}
Assume that $K$ has characteristic 0 and residue characteristic $p\geq 0$. Fix a base point $\zeta_0\in \hberk$. Then for each $\zeta\in \pberk$,\begin{equation*}\left|\int \log_v \delta(z, \zeta)_{\zeta_0} d\left(\nu_{\phi^n} - \mu_\phi\right)(z)\right| = O\left(\frac{n}{d^n}\right)\end{equation*} Here, the big-O constant depends on the fixed base point $\zeta_0$, but is independent of $\zeta$.
\end{thm}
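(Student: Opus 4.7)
The plan is to recast the estimate as a comparison of potentials. Fix $\zeta_0 \in \hberk$ and define $U_n(\zeta) = \int \log_v \delta(z,\zeta)_{\zeta_0}\, d\nu_{\phi^n}(z)$ and $U(\zeta) = \int \log_v \delta(z,\zeta)_{\zeta_0}\, d\mu_\phi(z)$. The theorem reduces to $\|U_n - U\|_\infty = O(n/d^n)$. Both $U_n$ and $U$ are continuous BDV functions on $\pberk$ with Laplacians $\nu_{\phi^n} - \delta_{\zeta_0}$ and $\mu_\phi - \delta_{\zeta_0}$ respectively, so $\Delta(U_n - U) = \nu_{\phi^n} - \mu_\phi$ is a signed measure of total mass zero, and the task is to bound this difference-of-potentials in sup-norm.

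The key step is to connect $U_n$ to $\ordRes_{\phi^n}$. Since $\nu_{\phi^n}$ is, up to normalization, the Laplacian of $\ordRes_{\phi^n}$ restricted to the canonical tree $\Gamma_{\text{Fix,Repel}}$, and since $\Delta_z \log_v \delta(z,\zeta)_{\zeta_0} = \delta_{\zeta_0} - \delta_\zeta$, integration by parts on the tree should formally yield
\[
U_n(\zeta) = \frac{\ordRes_{\phi^n}(\zeta_0) - \ordRes_{\phi^n}(\zeta)}{d^n - 1} + E_n(\zeta),
\]
where $E_n(\zeta)$ collects boundary contributions from the leaves of $\Gamma_{\text{Fix,Repel}}$ at which $\ordRes_{\phi^n}$ has nonzero outgoing slope. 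Theorem~\ref{thm:bounds} is the essential input here: it confines every point with $w_{\phi^n}(P) > 0$, and hence every relevant leaf, to the ball of radius $3n \log_v \tilde{\mathcal{L}}_\phi$ around $\zetaG$. Bounding the outgoing slopes (which are $O(d^n)$ in magnitude) by this radius and dividing by $d^n-1$ should force $E_n(\zeta) = O(n/d^n)$ uniformly in $\zeta$.

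To conclude, I would compare the leading term $-(d^n-1)^{-1}\ordRes_{\phi^n}(\zeta)$, up to the additive constant $(d^n-1)^{-1}\ordRes_{\phi^n}(\zeta_0)$, with $U(\zeta)$. The qualitative convergence $\int f\, d\nu_{\phi^n} \to \int f\, d\mu_\phi$ from \cite{KJ} identifies the limit, and a quantitative rate should follow from a telescoping argument using the multiplicativity of the resultant under composition, together with Rumely's transformation formula for $\ordRes$ under $\PGL_2$-conjugation. Each of the $n$ telescoping increments is controlled by the maximal slope of $\ordRes_{\phi^k}$ on the ball of radius $O(n)$ and contributes $O(1/d^n)$ after normalization, so summing produces the factor of $n$ in the final bound.

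The principal obstacle I anticipate is the careful accounting of the boundary terms $E_n(\zeta)$. Integration by parts on the infinite tree $\Gamma_{\text{Fix,Repel}}$ requires summing slope contributions over all leaves with nonzero outgoing slope, a sum that a priori need not converge; Theorem~\ref{thm:bounds} is precisely what makes it finite and of the right size. A secondary check is that the big-$O$ constant depends only on $\zeta_0$ and $\phi$ and is uniform in $\zeta$, which should follow from the fact that $\log_v \delta(P,\cdot)_{\zeta_0}$ is bounded on $\pberk$ for each $P \in \hberk$, combined with the support confinement of $\nu_{\phi^n}$ inside $\hberk$.
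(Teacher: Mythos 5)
Your outline shares one ingredient with the paper's argument --- using Theorem~\ref{thm:bounds} to confine $\supp(\nu_{\phi^n})$ to the ball of radius $3n\log_v\tilde{\mathcal{L}}_\phi$ --- but the route you build around $\ordRes_{\phi^n}$ has two genuine gaps. First, the identity you start from is not correct as stated: on the truncated tree one has $\Delta\,\ordRes_{\phi^n} = 2(d^n-1)(\mu_{\textrm{Br}}-\nu_{\phi^n})$, so $\nu_{\phi^n}$ differs from a multiple of $\Delta\,\ordRes_{\phi^n}$ by the branching measure $\mu_{\textrm{Br}}$ of $\widehat{\Gamma^n_{\textrm{FR}}}$, a probability measure that changes with $n$. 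Your integration by parts therefore produces, in addition to $\bigl(\ordRes_{\phi^n}(\zeta_0)-\ordRes_{\phi^n}(\zeta)\bigr)/2(d^n-1)$ --- and note that what actually appears are the values at the \emph{retractions} of $\zeta_0,\zeta$ to the tree, since $\Delta_z\log_v\delta(z,\zeta)_{\zeta_0}$ restricted to the graph is a difference of point masses at retracted points, which matters exactly when $\zeta$ is far away or of type I --- an extra term $\int\log_v\delta(z,\zeta)_{\zeta_0}\,d\mu_{\textrm{Br}}(z)$. That term is an integral of an order-one (in general order-$n$) quantity against a unit mass; it is nowhere near $O(n/d^n)$ and is not accounted for in your $E_n(\zeta)$. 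Relatedly, the normalization is off: the functions $\ordRes_{\phi^n}$ converge after dividing by $d^{2n}-d^n$, and their limit is $g_\phi(x,x)$, not the potential $u_\phi(\cdot,\zeta_0)$, so the leading term you isolate is not the object you ultimately need to compare with $U(\zeta)$ unless all of these extra contributions are tracked.

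Second, and more seriously, the final comparison --- ``a quantitative rate should follow'' from multiplicativity of the resultant and a telescoping over iterates --- is precisely the hard part, and nothing in the sketch supplies uniformity in $\zeta$, in particular for type~I points $\zeta$ in the Julia set, where the integrand $\log_v\delta(z,\zeta)_{\zetaG}$ is unbounded. Your closing remark that $\log_v\delta(P,\cdot)_{\zeta_0}$ is bounded for $P\in\hberk$ controls only the $\nu_{\phi^n}$ side; the $\mu_\phi$ side needs a mass-decay input. The paper handles this by the decomposition (\ref{eq:logdecompfirst})--(\ref{eq:logdecompthird}): for $\zeta$ within distance $3n\log_v\mathcal{L}_\phi$ of $\zetaG$ it applies the quantitative equidistribution of \cite{KJ} Theorem 4 to the CPA function $\log_v\delta(\cdot,\zeta)_{\zetaG}$ on the segment $[\zetaG,\zeta]$ (Lemma~\ref{lem:explicitequidistribution}), giving an error $\bigl(4C_\phi+12\rho(\zeta,\zetaG)\bigr)/(d^n-1)=O(n/d^n)$; for more distant $\zeta$ it truncates at $\zeta_\epsilon$ with $\diamG(\zeta_\epsilon)=\mathcal{L}_\phi^{-3n}$, kills the $\nu_{\phi^n}$-tail by the support bound, and controls the $\mu_\phi$-tail via the H\"older continuity of $u_\phi(\cdot,\zetaG)$, i.e.\ the ball-mass estimate $\mu_\phi(B_{\vv}(\zeta_{a,r})^-)\leq M(q_v-1)^\alpha r^\alpha$ feeding into Proposition~\ref{prop:telescopingprop}. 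This H\"older/mass-decay ingredient has no counterpart in your proposal, and without it (or a substitute) the tail integral against $\mu_\phi$ cannot be bounded uniformly in $\zeta$; so as written the argument does not close, even granting the $\ordRes$ bookkeeping in the first step.
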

A more quantitative version of this result is given in Theorem~\ref{thm:quantlogeq} below where the error constant is given in terms of a constant $C_\phi$ depending only on $\phi$, the Lipschitz constant $\mathcal{L}_\phi$, and the H\"older constant and exponent of the potential function $u_\phi(\cdot, \zetaG)$.

As a corollary to Theorem~\ref{thm:logeq}, we obtain several uniform convergence results for potential-theoretic functions. Here, $\phi^\#$ dentoes the derivative derivative of $\phi$ with respect to the spherial metric on $\mathbb{P}^1(K)$, which extends continuously to $\pberk$.
\begin{cor}\label{cor:potentialfunctionconvergences}
Let $K$ be a complete, algebraically closed non-Archimedean valued field of characteristic 0. The crucial measures satisfy the following convergence properties:
\begin{enumerate}
\item For each fixed $\zeta_0\in \hberk$, the potential functions $u_{\nu_{\phi^n}}(z, \zeta_0)$ converge uniformly to $u_\phi(z, \zeta_0)$. 
\item The Arakelov-Green's functions $g_{\nu_{\phi^n}}(x,y)$ converge uniformly to $g_\phi(x,y)$.
\item The Lyapunov exponent of $\phi$ can be approximated in terms of the measures $\nu_{\phi^n}$: \begin{equation}\label{eq:lyapconv}\int_{\pberk} \log_v [\phi^\#] d\nu_{\phi^n} \to L_v(\phi):= \int_{\pberk} \log_v [\phi^\#] d\mu_\phi\ .\end{equation}
\end{enumerate}
\end{cor}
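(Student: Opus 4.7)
The plan is to derive all three claims from Theorem~\ref{thm:logeq}, applied either to a single Hsia kernel (for parts (1)--(2)) or to a finite linear combination of Hsia kernels (for part (3)).

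Part (1) is essentially a restatement of Theorem~\ref{thm:logeq}. Since
\[
u_\nu(z,\zeta_0) \;=\; -\int \log_v \delta(z,w)_{\zeta_0}\, d\nu(w),
\]
the difference $u_{\nu_{\phi^n}}(z,\zeta_0) - u_\phi(z,\zeta_0)$ is precisely the integral bounded in Theorem~\ref{thm:logeq}, with $z$ playing the role of $\zeta$. Because the big-$O$ constant there is independent of $\zeta$, the convergence is uniform in $z\in\pberk$.

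For part (2), I would use the standard representation
\[
g_\nu(x,y) \;=\; -\log_v \delta(x,y)_{\zeta_0} \;+\; u_\nu(x,\zeta_0) \;+\; u_\nu(y,\zeta_0) \;+\; C_\nu,
\]
with $C_\nu$ chosen so that $\iint g_\nu\, d\nu\, d\nu = 0$. The first term is independent of the measure, and the two potential terms converge uniformly by (1). The constant $C_\nu$ is itself a double integral of $\log_v \delta(\cdot,\cdot)_{\zeta_0}$ against $d\nu\otimes d\nu$, and writing $d\nu_{\phi^n}\otimes d\nu_{\phi^n} - d\mu_\phi\otimes d\mu_\phi$ as a telescoping sum produces two terms in which the outer variable is integrated against $\nu_{\phi^n}-\mu_\phi$; applying Theorem~\ref{thm:logeq} to the inner integral in each summand, while invoking Theorem~\ref{thm:bounds} to keep the resulting potentials uniformly bounded on the supports involved, gives $C_{\nu_{\phi^n}} - C_\phi = O(n/d^n)$.

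For part (3), the main task is to exhibit $\log_v[\phi^\#]$ as a finite integer-linear combination of functions of the form $\log_v \delta(\cdot, w_i)_{\zeta_0}$ plus a constant. Starting from
\[
[\phi^\#](z) \;=\; \frac{|\phi'(z)|\,\max(1,|z|)^2}{\max(1,|\phi(z)|)^2}
\]
on $\mathbb{P}^1(K)$, I factor $\phi'$ over the critical points and poles of $\phi$ and rewrite each $\log_v|z-a|$ as $\log_v \delta(z,a)_{\zeta_0} + \log_v\max(1,|z|) + \log_v\max(1,|a|)$; analogous bookkeeping for the two $\max$ factors (using that $\log_v\max(1,|\cdot|)$ is the Hsia kernel at $\infty$ up to normalization) collapses everything into the claimed form, with the $w_i$ drawn from the critical points, poles, and $\infty$. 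Applying Theorem~\ref{thm:logeq} to each of the finitely many summands then gives $\int \log_v[\phi^\#]\, d(\nu_{\phi^n}-\mu_\phi) = O(n/d^n)$, which in particular proves \eqref{eq:lyapconv}. The main subtlety here is that $\log_v[\phi^\#]$ is defined on all of $\pberk$ while the calculation above is carried out on $\mathbb{P}^1(K)$; this is reconciled by observing that both sides are continuous $\BDV$ functions on $\pberk$ and agree on the dense classical locus, so the decomposition extends to $\hberk$, where $\nu_{\phi^n}$ and $\mu_\phi$ are supported.
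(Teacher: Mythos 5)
Your parts (1) and (2) are correct and essentially follow the paper's route: part (1) is exactly the paper's observation that the potential is the integral controlled by Theorem~\ref{thm:logeq}, and for part (2) your telescoping of $\nu_{\phi^n}\otimes\nu_{\phi^n}-\mu_\phi\otimes\mu_\phi$ does the same work as the paper's argument (which handles one of the two resulting terms by part (1) and the other by the weak convergence of \cite{KJ} Theorem 2); since the bound in Theorem~\ref{thm:logeq} is already uniform in $\zeta$, the extra appeal to Theorem~\ref{thm:bounds} is unnecessary but harmless.

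Part (3) is where you genuinely depart from the paper, which simply combines weak convergence with the convergence $u_{\nu_{\phi^n}}(c,\zetaG)\to u_{\mu_\phi}(c,\zetaG)$ at the critical points and cites Okuyama (\cite{Ok} Lemma 3.1). Your direct decomposition, as stated, is false: writing $\log_v[\phi^\#](z)=\log_v|\phi'(z)|+2\log_v\max(1,|z|)-2\log_v\max(1,|\phi(z)|)$, the term $\log_v\max(1,|\phi(z)|)$ cannot be collapsed onto kernels based at the poles and $\infty$. Its Laplacian on $\pberk$ is $\phi^*(\delta_\infty-\delta_{\zetaG})$, whose negative part is supported at the preimages of the Gauss point, which are in general type II points; no finite combination $\sum_i n_i\log_v\delta(\cdot,w_i)_{\zetaG}+C$ with $w_i$ among the critical points, poles and $\infty$ can produce mass there. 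Concretely, for $\phi(z)=z^2/p$ (residue characteristic $\neq 2$) the function $\log_v[\phi^\#]$ changes slope along the segment $[0,\zetaG]$ at $\zeta_{0,|p|^{1/2}}=\phi^{-1}(\zetaG)$, while every kernel based at $0$, $\infty$ is affine on that open segment; the discrepancy is already visible on classical points (for $|z|<1$ your proposed right-hand side is affine in $\log_v|z|$, the left-hand side is not), so the continuity-plus-density argument cannot rescue the identity. The strategy is salvageable: include kernels $\log_v\delta(\cdot,\xi)_{\zetaG}$ at the points $\xi\in\supp(\phi^*\delta_{\zetaG})$ with the appropriate integer multiplicities (equivalently, treat $\log_v\max(|f(z)|,|g(z)|)$ for a normalized pair $f,g$ directly). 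Theorem~\ref{thm:logeq} holds uniformly for every $\zeta\in\pberk$, including type II points, so the repaired decomposition still yields \eqref{eq:lyapconv}, and in fact with an explicit $O(n/d^n)$ rate, which is stronger than the qualitative statement the paper obtains via Okuyama's lemma.
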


The third main result in this paper concerns the barycenter of the measures $\nu_{\phi^n}$ and $\mu_\phi$. Rivera-Letelier was the first to consider barycenters in dynamics over non-Archimedean fields. For a finite positive Radon measure $\nu$ on $\pberk$, the barycenter is the collection of points $P\in \pberk$ such that no direction contains more than half of the total mass of $\nu$, i.e. $$\Bary(\nu)  = \{ P \in \pberk \ : \ \nu(B_{\vv} (P)^-) \leq \frac{1}{2} \nu(\pberk)\ , \forall \vv\in T_P \}\ .$$

In \cite{Ru2}, Rumely showed that the barycenter of the crucial measure $\nu_\phi$ is precisely the Minimal Resultant Locus of $\phi$. More generally, the barycenter of a finite positive Radon measure $\nu$ on $\pberk$ is the collection of points where $g_\nu(x,x)$ is minimized. \\

In \cite{Ru1} Theorem 0.1, Rumely showed that the set $\MinResLoc(\phi)$ was contained in the $\rho$-ball $B_\rho\left(\zetaG, \frac{2}{d-1}\ordRes(\phi)\right)$. Generalizing this for iterates and using the convergence of $\MinResLoc(\phi^n)$ established in \cite{KJ}, we show

\begin{thm}\label{thm:barybounds}
Let $\phi\in K(z)$ be a rational function of degree $d\geq 2$, and let $R=\frac{2}{d-1} \ordRes(\phi)$. Then for each $n$, $$\Bary(\nu_{\phi^n}) = \MinResLoc(\phi^n) \subseteq B_\rho(\zetaG, R)\ .$$ If $m_0 = \min_{x\in \pberk} g_\phi(x,x)$ and $\mu_\phi$ dentoes the equilibrium measure of $\phi$, then $$\Bary(\mu_\phi) \subseteq B_\rho(\zetaG, R+m_0 - g_\phi(\zetaG, \zetaG))\ .$$
\end{thm}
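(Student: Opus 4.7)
The theorem splits into the equality $\Bary(\nu_{\phi^n}) = \MinResLoc(\phi^n)$, the uniform inclusion $\MinResLoc(\phi^n) \subseteq B_\rho(\zetaG, R)$, and the bound on $\Bary(\mu_\phi)$. The first is Rumely's identification (Theorem A of \cite{Ru2}) applied to the iterate $\phi^n$, so no extra work is needed. For the uniform inclusion I would apply Rumely's Theorem 0.1 of \cite{Ru1} to $\phi^n$, giving
\[
\MinResLoc(\phi^n) \subseteq B_\rho\bigl(\zetaG, \tfrac{2}{d^n-1}\ordRes(\phi^n)\bigr),
\]
and then invoke the standard composition identity $\ord(\Res(\Phi^n)) = \tfrac{d^n-1}{d-1}\ord(\Res(\Phi))$ for any homogeneous lift $\Phi$ of $\phi$. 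Choosing $\Phi$ so that $\ord(\Res(\Phi))=\ordRes(\phi)$ yields $\ordRes(\phi^n) \leq \tfrac{d^n-1}{d-1}\ordRes(\phi)$, and the radius collapses to $R$ uniformly in $n$.

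For the bound on $\Bary(\mu_\phi)$ the plan is to combine the convergence $\MinResLoc(\phi^n) \to \Bary(\mu_\phi)$ from \cite{KJ} with the potential-theoretic decomposition
\[
g_\mu(x, x) \;=\; \rho(\zetaG, x) \;-\; 2\,u_\mu(x, \zetaG) \;+\; g_\mu(\zetaG, \zetaG),
\]
a consequence of $-\log_v \delta(x, x)_{\zetaG} = \rho(\zetaG, x)$ and the Baker--Rumely normalization of the Arakelov--Green's function. Applied at $P \in \Bary(\mu_\phi)$ (where $g_\phi(P,P)=m_0$), this rearranges to $\rho(\zetaG, P) = m_0 - g_\phi(\zetaG, \zetaG) + 2\,u_\phi(P, \zetaG)$, so the desired inclusion is equivalent to the bound $u_\phi(P, \zetaG) \leq R/2$.

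To establish this latter bound I would exploit the fact that, on $\Gamma_{\text{Fix,Repel}}$, the Laplacians of $\ordRes_{\phi^n}$ and $(d^n-1)\,u_{\nu_{\phi^n}}(\cdot, \zetaG)$ agree up to sign, so that after pinning values at $\zetaG$ one obtains the identity
\[
u_{\nu_{\phi^n}}(P_n, \zetaG) \;=\; \tfrac{1}{d^n-1}\bigl(\ordRes_{\phi^n}(\zetaG) - \ordRes(\phi^n)\bigr) \;=\; \tfrac{R}{2} \;-\; \tfrac{\ordRes(\phi^n)}{d^n-1} \;\leq\; \tfrac{R}{2}
\]
for any $P_n \in \MinResLoc(\phi^n) = \Bary(\nu_{\phi^n})$; the second equality uses the composition identity from Part 1, and the final inequality uses $\ordRes(\phi^n) \geq 0$. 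Selecting $P_n \to P$ via the convergence of \cite{KJ} and passing to the limit with the uniform convergence of potentials from Corollary~\ref{cor:potentialfunctionconvergences} then transfers the bound to $u_\phi(P, \zetaG) \leq R/2$, which combined with the rearranged decomposition gives $\rho(\zetaG, P) \leq R + m_0 - g_\phi(\zetaG, \zetaG)$.

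The main obstacle is the explicit potential-theoretic identity for $u_{\nu_{\phi^n}}(P_n, \zetaG)$ in terms of $\ordRes_{\phi^n}$: extracting it requires a careful bookkeeping of sign conventions, of the difference between $\ordRes_\phi$ as a function on $\pberk$ versus its restriction to $\Gamma_{\text{Fix,Repel}}$, and of the choice of normalized lift used to define $\ordRes(\phi)$ at $\zetaG$. Once that identity is in hand, the remaining ingredients---Rumely's Theorem 0.1 for iterates, the resultant composition formula, and the uniform convergence of potentials---are essentially off-the-shelf, and the passage to the limit along $P_n \to P$ is routine given the continuity of $u_\phi(\cdot, \zetaG)$ on $\pberk$.
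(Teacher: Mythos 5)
Your reduction of the barycenter bound to $u_\phi(P,\zetaG)\leq R/2$ via $g_\phi(x,x)=\rho(\zetaG,x)-2u_\phi(x,\zetaG)+g_\phi(\zetaG,\zetaG)$ is correct (since $u_\nu(\zetaG,\zetaG)=0$ for any probability measure), and citing Rumely for $\Bary(\nu_{\phi^n})=\MinResLoc(\phi^n)$ is fine. But the load-bearing step in both Part 2 and Part 3 is the ``composition identity'' $\ord\Res(\Phi^n)=\frac{d^n-1}{d-1}\ord\Res(\Phi)$, and this is false. The correct formula is $\Res(F\circ G)=\Res(F)^{\deg G}\,\Res(G)^{(\deg F)^2}$, which iterates to $\ord\Res(\Phi^{\circ n})=d^{n-1}\frac{d^n-1}{d-1}\,\ord\Res(\Phi)$; concretely, for $\phi(z)=z^2/p$ one has $\ordRes(\phi)=2\ord(p)$ but $\ordRes(\phi^2)=12\,\ord(p)$, not $6\,\ord(p)$. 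In general $\ordRes(\phi^n)$ grows like $d^{2n}$ -- this is exactly why the paper (and \cite{KJ} Theorem 1) normalizes by $d^{2n}-d^n$, with $\frac{1}{d^{2n}-d^n}\ordRes_{\phi^n}(\zetaG)\to g_\phi(\zetaG,\zetaG)$, which is typically nonzero. Consequently, applying Rumely's Theorem 0.1 to $\phi^n$ only gives $\MinResLoc(\phi^n)\subseteq B_\rho\bigl(\zetaG,\frac{2}{d^n-1}\ordRes(\phi^n)\bigr)$ with radius of order $d^{n-1}R$ (in the example above, $2^{n+1}$), which blows up; your Part 2 does not close. The paper avoids this precisely by \emph{not} citing Theorem 0.1 for iterates: Lemma~\ref{lem:coefficientlemma} bounds the extreme coefficients of a normalized lift of $\phi^n$ below by $|\Res(\Phi)|^{\frac{d^n-1}{d-1}}$ (this is where that exponent legitimately occurs -- it bounds coefficients, not the resultant), and Proposition~\ref{prop:minreslocbd} then shows the slope of $\ordRes_{\phi^n}$ along any ray from $\zetaG$ becomes positive once $\rho(\zetaG,\cdot)>R$, which is what forces the minimum into $B_\rho(\zetaG,R)$ uniformly in $n$.

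Part 3 has a second, independent gap. The identity you want, $u_{\nu_{\phi^n}}(P_n,\zetaG)=\frac{1}{d^n-1}\bigl(\ordRes_{\phi^n}(\zetaG)-\min_x\ordRes_{\phi^n}(x)\bigr)$, does not follow from comparing Laplacians: on the truncated tree one has $\Delta\,\ordRes_{\phi^n}=2(d^n-1)(\mu_{\textrm{Br}}-\nu_{\phi^n})$, whereas $\Delta u_{\nu_{\phi^n}}(\cdot,\zetaG)=\nu_{\phi^n}-\delta_{\zetaG}$; the branching measure $\mu_{\textrm{Br}}$ is not $\delta_{\zetaG}$ (and $\zetaG$ need not even lie on the tree), so the two functions do not agree up to an affine normalization. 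Moreover your subsequent evaluation ``$=\frac{R}{2}-\frac{\ordRes(\phi^n)}{d^n-1}$'' again invokes the false composition identity, and with the true growth rate it would make the right-hand side tend to $-\infty$ while $u_{\nu_{\phi^n}}\geq 0$, an immediate inconsistency. The paper's actual route to the second inclusion is different and does work: combine the slope inequality (\ref{eq:ordreslowerbound}) of Proposition~\ref{prop:minreslocbd} with the convergence $\frac{1}{d^{2n}-d^n}\ordRes_{\phi^n}(x)\to g_\phi(x,x)$ from \cite{KJ} Theorem 1 to get $g_\phi(\zeta,\zeta)\geq\rho(\zetaG,\zeta)+g_\phi(\zetaG,\zetaG)-R$ for all $\zeta$, and then evaluate at a point of $\Bary(\mu_\phi)$, where $g_\phi(\zeta,\zeta)=m_0$. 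If you want to salvage your outline, you must replace the composition identity by the coefficient estimates of Lemma~\ref{lem:coefficientlemma} (or an equivalent slope bound); the rest of your potential-theoretic bookkeeping is sound but cannot compensate for that step.
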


\subsection{Outline}

In Section~\ref{sect:background} we recall the necessary background from dynamics and potential theory on $\pberk$, as well as the construction of the crucial measures $\nu_{\phi^n}$. Following this, in Section~\ref{sect:prelim} we present several preliminary technical lemmas; these results build on unpublished work of Rumely-Winburn and on a lemma of Przytycki. In Section~\ref{sect:bounds}, these bounds are used to establish Theorem~\ref{thm:bounds} by considering the various types of points that can receive weight. In Section~\ref{sect:logeq}, we prove a quantitative version of Theorem~\ref{thm:logeq} using the bounds in Section~\ref{sect:bounds} and the equidistribution in \cite{KJ} Theorem 4. Finally, in Section~\ref{sect:barybounds} we give a lemma estimating some of the coeffients of an iterate $\phi^n$; this together with explicit expressions for $\ordRes_{\phi^n}(x)$ and the convergence of these functions in \cite{KJ} Theorem 1 gives Theorem~\ref{thm:barybounds}.

\subsection{Acknowledgements}
The author would like to thank Robert Rumely for helpful conversations in the preparation of this article. The author was partially supported by a Research Training Grant DMS-1344994 of the RTG in Algebra, Algebraic Geometry, and Number Theory, at the University of Georgia.

\section{Background}\label{sect:background}
The background is divided into two sections. The first is a brief overview of the Berkovich projective line $\pberk$ and potential theory on $\pberk$, while the second recalls the construction of the crucial measures $\nu_{\phi}$. The book \cite{Ber} by Vladimir Berkovich gives a thorough development of analytic geometry over non-Archimedean fields. A rigorous development of dynamics on the Berkovich projective line can be found in the book by Baker and Rumely (\cite{BR}) and in the thesis of Juan Rivear-Letelier (see \cite{RL}). 
\subsection{The Berkovich Line}
Let $K$ be an algebraically closed field that is complete with respect to a non-Archimedean absolute value $|\cdot |_v$. We let $\mathcal{O} = \{z\in K\ : \ |z|\leq 1\}$ denote the ring of integers and $\mathfrak{m} =\{z\in K\ : \ |z|<1\}$. Let $k=\mathcal{O}/\mathfrak{m}$ denote the residue field of $K$, with $\textrm{char}(k) \geq 0$. 

Let $$D(a,r) := \{z\in K\ : \ |z-a|\leq r\}$$ denote the closed disc of radius $r$ about $a$ in the usual metric, and let $D(a,r)^-$ denote the corresponding open disc. Let $$B(a,r):= \{z\in \mathbb{P}^1(K)\ : \ ||z,a|| \leq r\}$$ denote the closed disc of radius $r$ about $a$ in the chordal metric, and let $B(a,r)^-$ denote the corresponding open disc. Note that $B(a,r) = \mathbb{P}^1(K)$ whenever $r\geq 1$, and $B(a,r) = D(a,r)$ whenever $|a|\leq 1$ and $r<1$.\\

The Berkovich affine line over $K$, denoted $\aberk$, is defined to be the collection of all multiplicative seminorms $[\cdot]$ on $K[X]$ that extend the absolute value on $K$. Berkovich \cite{Ber} has given a more intuitive description of $\aberk$ in terms of discs: most points $\zeta\in \aberk$ correspond to sup seminorms on discs $$[f]_{a,r} = \sup_{z\in D(a,r)} |f(z)|\ ,$$ where $a\in K$ and $r\in \mathbb{R}_{\geq 0}$. We write $\zeta=\zeta_{a,r}$ for the point corresponding to the disc $D(a,r)$. The unit disc is often referred to as the Gauss point, and we write $\zetaG = \zeta_{0,1}$. In the case $r=0$, these are simply the norms $[f]_{a,0} = |f(a)|$, which gives a natural embedding of $K$ into $\aberk$. Depending on whether or not $K$ is maximally complete, there may be additional points which correspond to limits of points $\zeta_{a_i, r_i}\in\aberk$ with $r_i \searrow 0$ but for which the intersection of the corresponding open discs $\bigcap D(a_i, r_i)^-$ is empty. These points will not play an important role in this paper.

We often categorize the points of $\aberk$ into four types:

\begin{itemize}
\item Type I points are of the form $\zeta_{a,0}$ for some $a\in K$.
\item Type II points are of the form $\zeta_{a,r}$ for some $r\in |K^\times|\setminus\{0\}$.
\item Type III points are of the form $\zeta_{a,r}$ for some $r\not\in |K^\times|$.
\item Type IV points are limits of points $\zeta_{a_i, r_i}\in \aberk$ for which $\bigcap D(a_i, r_i)^- = \emptyset$.
\end{itemize}

One can formally obtain the Berkovich projective line $\pberk$ by considering multiplicative seminorms on $K[X,Y]$ which extend $|\cdot|$. More intuitively, $\pberk$ is obtained from $\aberk$ by adjoining a type I point $\infty$, and so points in $\pberk\setminus \{\infty\}$ can still be considered as corresponding to discs in $K$. The collection of all type II, III and IV points in $\pberk $ is called Berkovich hyperbolic space and is denoted $\hberk$.

\subsubsection{Tree Structure}
There is a natural correspondence between intervals $[r,s]\in \mathbb{R}_{\geq 0}$ and segments $[\zeta_{a,r}, \zeta_{a,s}]$ in $\pberk$. Owing to the fact that the intersection of non-Archimedean discs is either empty or again a disc, this gives $\pberk$ a natural tree structure. In particular, it is uniquely path connected. We note that the type I and type IV points are the endpoints of the tree.

We can also discuss the tangent space of a point $\zeta\in \pberk$. If $\zeta$ is of type I or IV, then it is an endpoint and there is a unique direction pointing into $\pberk$. To understand the tangent space at a type II point $\zeta$, we first consider the special case of $\zeta=\zetaG$. Here, the underlying disc is $D(0,1)$, which can be written as a disjoint union $$D(0,1) = \bigsqcup_{a\in k} \alpha_a+\mathfrak{m}\ ,$$ where the $\alpha_a\in \mathcal{O}$ are coset representatives of $a\in k$. Each direction $\vv\in T_{\zetaG}$ corresponds either to one of the discs $a+\mathfrak{m}$ or to the direction pointing towards $\infty$, giving a natural correspondence between $T_{\zetaG}$ and $\mathbb{P}^1(k)$. By changing co\"ordinates, we have that $T_\zeta\cong \mathbb{P}^1(k)$ for any type II point $\zeta$. Type III points have two directions in their tangent space.

For each point $\zeta\in \pberk$, the connected components of $\pberk \setminus \{\zeta\}$ correspond to the various tangent directions away from $\zeta$. We label the connected components of $\pberk \setminus \{\zeta\}$ by $B_{\vv}(\zeta)^-$ (this is slightly different than \cite{BR}, where the authors used $B_\zeta(\vv)^-$ to denote the connected components of $\pberk \setminus \{\zeta\}$). These sets form a subbasis for the weak topology on $\pberk$. In this topology, $\pberk$ is compact but in general is not metrizable. Additionally, the points of type I and the points of type II each form a dense subset of $\pberk$. If $\pberk$ has points of type III, then they are dense in $\pberk$, and likewise if $\pberk$ has points of type IV they also form a dense subset of $\pberk$ in the weak topology.

Frequently, we will also consider finite subgraphs $\Gamma$ of $\hberk$; these are the union of finitely many segments $[P_i, Q_j]$ where each $P_i, Q_j$ is a point of type II or III. A function $f: \Gamma \to \mathbb{R}$ is said to be continuous and piecewise affine if there exists a finite set of points $\{s_1, ..., s_\ell\}\subseteq \Gamma$ such that each segment of $\Gamma\setminus\{s_1, ..., s_\ell\}$ is isometric to an open interval in $\mathbb{R}$, and $f$ is continuous on $\Gamma$ and affine on the segments in $\Gamma\setminus \{s_1, ..., s_\ell\}$. The collection of all continuous, piecewise affine functions on $\Gamma$ is denoted $\CPA(\Gamma)$. Given a point $\zeta\in \Gamma$, we let $T_\zeta(\Gamma) \subseteq T_\zeta$ denote the collection of directions that point into $\Gamma$. The valence of $\zeta$ in $\Gamma$ is the cardinality of $T_\zeta(\Gamma)$, and we write $v_\Gamma(\zeta):= \#T_\zeta(\Gamma)$.

\subsubsection{The Action of a Rational Map on $\pberk$}
The action of a rational map $\phi\in K(z)$ extends naturally to $\pberk$. This can be made formal using the seminorm construction on $K[X,Y]$ by choosing a lift $\Phi$ of $\phi$ and setting $[f]_{\phi(\zeta)} := [f\circ \Phi]_{\zeta}$ for all $f\in K[X,Y]$. 

As before, a more intuitive way to understand this action is by looking at discs: in non-Archimedean analysis, a holomorphic function will map a disc $D(a,r)$ to another disc $\phi(D(a,r)) = D(b,s)$ (\cite{ADS} Proposition 5.16). Taking more care, one can give an analogous statement for rational maps (which will map punctured discs map to punctured discs, where we puncture the domain at the poles and roots of $\phi$; see \cite{BR} Propositions 2.18 and 2.19). Informally, this gives $\phi(\zeta_{a,r}) = \zeta_{\phi(D(a,r))}$.

An important fact is that a rational map preserves the type of a point, e.g. if $\zeta$ is of type II, so too is $\phi(\zeta)$ (see \cite{BR} Proposition 2.15). 

We will often make use of the fact that the automorphism group of $\pberk$ is $\PGL_2(K)$ (see \cite{BR} Corollary 2.13); more precisely, given any triple $(a,\zeta, b)$, where $a,b\in \mathbb{P}^1(K)$ and a type II point $\zeta\in [a,b]$, then there exists a $\gamma\in \PGL_2(K)$ sending the triple $(a,\zeta,b)$ to $(0, \zetaG, \infty)$. Such a $\gamma$ need not be unique; however it is unique up to post-composition by an element $\tau\in \GL_2(\mathcal{O})$, which is the stabilizer of $\zetaG$ (\cite{Ru1}, Proposition 1.1).

\subsubsection{Reduction Types}

The rational map $\phi$ also induces a tangent map $\phi_*$ at each point of $\pberk$. At type I and type IV points, the tangent space is trivial and hence so is the action of $\phi_*$. If $\zeta$ is a type II point, we may choose elements $\tau, \gamma\in \PGL_2(K)$ so that $\tau\circ\phi\circ\gamma(\zetaG) = \zetaG$. Without loss of generality we may assume that $\zeta=\zetaG$ and that $\phi$ fixes $\zetaG$.

Write $$\phi(z) =\frac{f(z)}{g(z)} = \frac{a_d z^d + ... +a_0}{b_d z^d+...+b_0}\ ,$$ where $f$ and $g$ have no common factors. We say that $f$, $g$ are \emph{normalized representatives} of $\phi$ if $|a_i|, |b_i| \leq 1$ for each $i$ and at least one coefficient of $f$ or of $g$ is a unit. We can apply the reduction map $\tilde{\cdot} :\mathcal{O} \to k$ to each of the coefficients of a normalized representative and obtain a map $$\tilde{\phi} = \frac{\tilde{f}(z)}{\tilde{g}(z)} = \frac{\tilde{a_d}z^d+...+\tilde{a_0}}{\tilde{b_d}z^d+...+\tilde{b_0}}\in k(z)\ .$$ Note that $\tilde{f}, \tilde{g}$ may have a common factor, and so the degree of $\tilde{\phi}$ may be less than $d$; but because we are assuming $\phi(\zetaG) = \zetaG$, the map $\tilde{\phi}$ is non-constant (\cite{BR} Lemma 2.17). Indexing directions $\vv_a\in T_{\zetaG}$ by the corresponding element of $\mathbb{P}^1(k)$, we define the action $\phi_* \vv_a = \vv_{\tilde{\phi}(a)}$.\\

Fix now an arbitrary point $\zeta\in \hberk$. Choose $\gamma\in \PGL_2(K)$ with $\gamma(\zetaG) = \zeta$. By choosing a normalized representative of $\phi^\gamma$, we define the reduction of $\phi$ at $\zeta$ to be the map $\widetilde{\phi^\gamma}$. The degree of $\phi$ at $\zeta$ is defined to be $\deg_\phi(\zeta) = \deg(\widetilde{\phi^\gamma})$. While the reduction of $\phi$ at $\zeta$ depends on the choice of $\gamma$, the notion of degree is well-defined.

The reduction of a map $\phi$ at a point $\zeta$ determines much of its local behaviour. Perhaps most importantly, a point $\zeta$ is fixed by $\phi$ if and only if the reduction at $\zeta$ is non-constant (see \cite{BR} Lemma 2.17). Here, we recall a classification of type II fixed points based on the reduction of $\phi$ at $\zeta$:

\begin{itemize}
\item A point $\zeta$ is said to be a repelling fixed point if $\deg_\phi(\zeta) \geq 2$. Here, the map $\widetilde{\phi^\gamma}$ is conjugate over $\mathbb{P}^1(k)$ to a rational map of degree at least 2. A repelling periodic point is called a focused repelling point if there exists a unique direction $\vv\in T_\zeta$ containing all of the fixed points of $\phi$.
\item A point $\zeta$ is said to be a multiplicatively indifferent fixed point if $\deg_\phi(\zeta) = 1$ and the reduction $\widetilde{\phi^\gamma}$ is conjugate over $\mathbb{P}^1(k)$ to a map of the form $z\mapsto az$ for some $a\in k\setminus\{0, 1\}$. 
\item A point $\zeta$ is said to be an additively indifferent fixed point if $\deg_\phi(\zeta) = 1$ and the reduction $\widetilde{\phi^\gamma}$ is conjugate over $\mathbb{P}^1(k)$ to a map of the form $z\mapsto z+c$ for some $c\in k^\times$.
\item A point $\zeta$ is said to be an id-indifferent fixed point if $\deg_\phi(\zeta)=1$ and the reduction $\widetilde{\phi^\gamma}$ is the identity on $\mathbb{P}^1(k)$. 
\end{itemize}

These reduction types will play an important role in describing the measures $\nu_\phi$; see Section~\ref{sect:ordresphi} below.

\subsubsection{Metrics on $\pberk$}
In this section, we introduce two metrics $d$ and $\rho$ on $\pberk$. Both metrics generate the \emph{strong topology} on $\pberk$, which is finer than the weak topology introduced above. In this topology, $\pberk$ is no longer compact and in fact is not even locally compact! 

In order to define these two metrics, we need to introduce the notion of diameter relative to $\zetaG$ (one can define a diameter with respect to any $\zeta\in \pberk$; see \cite{BR} Chapter 4). If $x\in \pberk \setminus \{B_{\vv_\infty}(\zetaG)^-\}$ is a point of type I, II or III, then it corresponds to a (possibly degenerate) subdisc $D(a,r)$ in the closed unit disc $D(0,1)$. In this case we define $$\diam_{\zetaG}(x) := r\ .$$ If $x\in B_{\vv_\infty}(\zetaG)^-$ and $\psi(z) = \frac{1}{z}$, then $\psi(x)\in \pberk \setminus \{B_{\vv_\infty}(\zetaG)^-\}$, and we set $$\diam_{\zetaG}(x) := \diam_{\zetaG}(\psi(x))\ .$$ 

For a fixed base point $\zeta\in \pberk$, and any two points $z, w\in \pberk$, we will let $z\wedge_\zeta w$ denote the unique point in the intersection of the paths $[z,w]$, $[z, \zeta]$, and $[w, \zeta]$. We define the small metric on $\pberk$ by $$\textbf{d}_{\pberk}(x,y) := 2\diam_{\zetaG}(x\wedge_{\zetaG} y) - \diam_{\zetaG}(x) - \diam_{\zetaG}(y)\ .$$ The small model metric is an extension of twice the chordal metric on $\mathbb{P}^1(K)$, and is invariant under the action of $\GL_2(\mathcal{O})$. The action of a rational map $\phi$ on $\pberk$ is Lipschitz continuous with respect to this metric (see \cite{BR} Proposition 9.37). 

Similarly, we define the big metric on $\hberk$ by $$\rho(x,y) := 2\log_v \diam_{\zetaG}(x\wedge_{\zetaG} y) - \log_v \diam_{\zetaG}(x) - \log_v \diam_{\zetaG}(y)\ .$$ The big metric is $\PGL_2(K)$-invariant and will play an important role in the study of continuous piecewise affine functions on subgraphs $\Gamma\subseteq \hberk$. It is important to note that both metrics generate the strong topology and on $\hberk$ they are locally bounded in terms of one another.

\subsubsection{Potential Theory on $\pberk$}
We close this subsection with a discussion of potential theory on $\pberk$. Several different, but compatible, approaches to potential theory on $\pberk$ have been given (see \cite{BR}, \cite{FJ}, \cite{Th}). We will follow the approach of \cite{BR}.

Given a point $\zeta\in \pberk$ and a direction $\vv\in T_\zeta$, one can define the directional derivative of a function $f:\pberk \to \mathbb{R}$ as $$\del_{\vv} (f)(\zeta) = \lim_{t\to 0} \frac{f(\zeta+t\vv) - f(\zeta)}{t}\ , $$ provided the limit exists. For a fixed, finite graph $\Gamma\subseteq \hberk$ and a function $f\in \CPA(\Gamma)$, the directional derivatives exist for all $\zeta\in \Gamma$ and all $\vv\in T_\zeta(\Gamma)$. We define the Laplacian of $f$ on $\Gamma$ is the measure $$\Delta_\Gamma (f) := \sum_{\zeta\in \Gamma} -\left(\sum_{\vv\in T_\zeta(\Gamma)} \del_{\vv}(f)(\zeta)\right) \delta_\zeta\ .$$ This can be extended to a more general class of functions (those of bounded differential variation) defined on more general Borel sets in $\pberk$. \\

The fundamental kernel for potential theory on $\pberk$ is $$-\log_v \delta(x,y)_{\zeta}\ ,$$ where $\delta(x,y)_{\zeta}$ denotes the Hsia kernel relative to some fixed point $\zeta\in \pberk$. If $\zeta = \zetaG$, then this kernel is defined to be $$-\log_v \delta(x,y)_{\zetaG} := \rho(x\wedge_{\zetaG} y , \zetaG)\ .$$ For more general $\zeta$, we can define a generalized Hsia kernel as \begin{equation} \label{eq:Hsia}\delta(x,y)_{\zeta} = C_{\zeta}\frac{||x,y||}{||x, \zeta|| \cdot ||y, \zeta||} \end{equation} for an appropriately chosen $C_\zeta$ depending on $\zeta$. We remark that, for fixed $y, \zeta\in \pberk$, the function $h(x):= -\log_v \delta(x,y)_{\zeta}$ is linear along $[y, \zeta]$, and is constant on segments off of the path $[y, \zeta]$.\\

Let $\nu$ be a finite signed Radon measure on $\pberk$, and fix a point $\zeta\in \pberk$. The potential function associated to $\nu$ is defined to be $$u_\nu(z, \zeta):= -\int \log_v \delta(w, z)_{\zeta} d\nu(w)\ .$$ It satisfies the property that $\Delta u_\nu(\cdot, \zeta) =\nu- \nu(\pberk)\delta_\zeta$. We say that the measure $\nu$ has bounded potentials if, for some fixed $\zeta\in \hberk$, the function $u_\nu(z, \zeta)$ is bounded, and likewise that $\nu$ has continuous potentials if $u_\nu(z, \zeta) $ is continuous for some choice of $\zeta\in \hberk$. Using the transformation of the Hsia kernel given in (\ref{eq:Hsia}), one can show that these properties are independent of the choice of $\zeta$.

Finally, the Arakelov-Green's function attachted to $\nu$ is the two-variable function $$g_\nu(x,y):= -\int \log_v \delta(x,y)_{\zeta} d\nu(\zeta) + C\ ,$$ where the normalization constant is chosen so that $\iint g_\nu(x,y) d\nu(x) d\nu(y) = 0$. This function is symmetric, and for fixed $y\in \pberk$ we have $$\Delta g_\nu(\cdot, y) = \delta_y - \nu\ .$$

\subsection{The Function $\ordRes_{\phi}(x)$ and the Crucial Measures}\label{sect:ordresphi}
In this section we recall the necessary background that pertains to the measures $\nu_{\phi^n}$ and the crucial set. We refer the reader to the original papers (\cite{Ru1}, \cite{Ru2}) for a more rigorous development.

In \cite{Ru1}, Rumely studied a function $\ordRes_\phi:\pberk \to \mathbb{R}$ that measured the resultant of various $\PGL_2(K)$-conjugates of $\phi$. Given a map $\phi\in K(z)$, we say that a polynomial map $\Phi:K^2 \to K^2$ is a normalized representative of $\phi$ if $\Phi=[F,G]$ for homogeneous polynomials $F,G\in \mathcal{O}[X,Y]$ with $\phi(z) = \frac{F(z,1)}{G(z,1)}$, and at least one coefficient of $F$ or $G$ is a unit. Such a representative of $\phi$ is unique up to scaling by a unit $c\in k^\times$. 

The resultant $\Res(F,G)$ of a pair of homogeneous polynomials is the determinant of the Sylvester matrix defined by $F$ and $G$. It has the property that $\Res(F,G) = 0$ if and only if $F$ and $G$ have a common root over the algebraic closure. When studying the reduction $\widetilde{\phi}$ of $\phi$, it often happens that $\tilde{F}, \tilde{G}$ have a common factor over the residue field $k$. This is measured by the vanishing of $\Res(\tilde{F}, \tilde{G}) = \widetilde{\Res(F, G)}\ .$ On the other hand, $\tilde{F}, \tilde{G}$ have no common factors if and only if $$\ordRes(\phi) := -\log_v |\Res(F,G)| = 0$$ for some normalized lift $[F,G]$ of $\phi$. In this case, we say that the map $\phi$ has good reduction. Many maps, however, do not have good reduction. In this case, one can ask whether some $\PGL_2(K)$-conjugation $\phi^\gamma$ of $\phi$ has good reduction. If so, we say that $\phi$ has potential good reduction. \\

The function $\ordRes_\phi(x)$ was originally introduced to determine algorithmically whether a map $\phi$ had potential good reduction. The idea is to translate the problem from $\PGL_2(K)$ to $\pberk$. Recall that the points of $\PGL_2(K)$ act transitively on type II points; more precisely, given any type II point $\zeta\in \hberk$, there exists $\gamma\in \PGL_2(K)$ with $\gamma(\zetaG) = \zeta$. One is led to consider a function $$\ordRes_{\phi}(\zeta):= \ordRes(\phi^\gamma)\ .$$ Rumely shows (\cite{Ru1}, Theorem 0.1) that this function can be extended continuously to all of $\pberk$. The resulting function is continuous and piecewise affine along segments of $\pberk$. It is convex up and attains a minimum. The set on which $\phi$ attains its minimum is called the Minimal Resultant Locus, which we denote $\MinResLoc(\phi)$. $\MinResLoc(\phi)$ is either a point or a segment in $\hberk$ (\cite{Ru1}, Theorem 0.1), and Rumely showed further that it is contained in the tree $\Gamma_{\textrm{FR}}$ spanned by the type I fixed points of $\phi$ and the type II repelling fixed points in $\hberk$ (\cite{Ru2} Proposition 4.4). The map $\phi$ has potential good reduction in some co\"ordinate system if and only if $\ordRes_{\phi}(x)$ vanishes on $\MinResLoc(\phi)$. 

\subsubsection{ The Crucial Measures}
Given that $\ordRes_\phi(x)$ is continuous and piecewise affine, one can compute its Laplacian on finite subgraphs of $\pberk$. In particular, looking at an appropriately truncated version $\widehat{\Gamma_{\textrm{FR}}}$ of $\Gamma_{\textrm{FR}}$, one finds $$\Delta_{\widehat{\Gamma_{\textrm{FR}}}} \ordRes_\phi(\cdot) = 2(d-1) ( \mu_{\textrm{Br}} - \nu_{\phi})\ .$$ Here, the measure $\mu_{\textrm{Br}}$ is the canonical branching measure on $\widehat{\Gamma_{\textrm{FR}}}$ (see \cite{CR}) and $\nu_\phi$ is the crucial measure.\\

The crucial measures admit an alternate definition as a weighted sum of point masses. More precisely, we can write $$\nu_{\phi} = \frac{1}{d-1} \sum_{P\in \pberk} w_\phi(P) \delta_P\ .$$ For points $P\in \hberk$ fixed by $\phi$, the weight functions $w_\phi:\pberk \to \mathbb{Z}_{\geq 0}$ can be given in terms of the reduction of $\phi$ at $P$ and the number of shearing directions of $\phi$ at $P$: a direction $\vv\in T_P$ is called shearing if $B_{\vv}(P)^-$ contains a type I fixed point but $\phi_* \vv \neq \vv$. The number of shearing directions at $P$ is denoted $N_{\textrm{Shearing}}(P)$.

For points $Q$ which are not fixed by $\phi$, the weight functions $w_\phi$ depend on the valence of $Q$ in the tree $\Gamma_{\textrm{Fix}}$ spanned by the type I fixed points. Points of type I and of type IV are assigned weight 0.

\begin{defn}[\cite{Ru2}, Definition 8]\label{def:weights} For each $P\in \pberk$, the weight $w_\phi(P)$ is the following non-negative integer:
\begin{enumerate}
\item If $P\in \hberk$ is fixed by $\phi$, then $$w_\phi(P) = \deg_\phi(P) - 1 + N_\textrm{Shearing}\ .$$
\item If $P\in \hberk$ is a branch point of the tree $\Gamma_{\textrm{Fix}}$ spanned by the type I fixed points, then $$w_\phi(P) = \max(0, v_{\Gamma_{\textrm{Fix}}}(P) -2)\ .$$
\item If $P$ is a type I point, then $w_\phi(P) = 0$.
\end{enumerate}
\end{defn}

The tree in $\hberk$ spanned by the crucial set for the map $\phi$ is called the crucial tree, and is denoted $\Gamma_{\textrm{Cr}}$. The corresponding tree spanned by the crucial set for the map $\phi^n$ is $\Gamma_{\textrm{Cr}}^n$.

Rumely also gives the weight formula (\cite{Ru2} Theorem 6.2) $$\sum_{P\in \pberk} w_\phi(P) = d-1\ .$$ Hence $\nu_\phi$ is a probability measure supported at finitely many points in $\pberk$.

There is a relationship between the reduction types given above and the weights of fixed points: id-indifferent points receive no weight, as they have degree 1 and do not exhibit any shearing directions. Type II repelling periodic points always receive weight, since $\deg_\phi(P) \geq 2$. One can also show that additively indifferent points that lie in the tree $\Gamma_{\textrm{Fix}}$ must always receive weight. Multiplicatively indifferent points may or may not receive weight, depending on their valence in $\Gamma_{\textrm{Fix}}$.

\section{Preliminaries}\label{sect:prelim}
In this section, we establish some preliminary results that will be used in deriving the bounds on the crucial set. The first few results build on unpublished work of Rumely and Winburn and pertain to the distance between roots and poles of $\phi$. The latter lemmas make explicit a lemma of Przytycki and give a quantitative estimate of the proximity between critical points and periodic points.

\subsection{Bounds Concerning Roots and Poles}
The Lipschitz constant $\mathcal{L}_\phi$ for the action of $\phi$ on $\mathbb{P}^1(K)$ with respect to the chordal metric $||\cdot, \cdot||$ will play an important role in the results below. We recall that $$\mathcal{L}_\phi := \sup_{\substack{x,y\in \mathbb{P}^1(K)\\x\neq y}} \frac{||\phi(x), \phi(y)||}{||x,y||}\ .$$

\noindent\emph{Remark:} The Lipschitz constant is always at least 1: choose two points $x\neq y$ with $\phi(x) =0, \phi(y)=\infty$. Hence $||\phi(x), \phi(y)|| =2$, and $0<||x,y|| \leq 2$. In particular, $\mathcal{L}_\phi \geq \frac{||\phi(x), \phi(y)||}{||x,y||} = \frac{2}{||x,y||}\geq 1$. An upper bound for $\mathcal{L}_\phi$ is considered in work of Rumely and Winburn (see \cite{RW}), which will be discussed again below.

\noindent \emph{Remark:} The Lipschitz constant $\mathcal{L}_\phi$ is $\GL_2(\mathcal{O})$-invariant, in the sense that $\mathcal{L}_\phi = \mathcal{L}_{\phi^\tau}$ for any $\tau\in \GL_2(\mathcal{O})$. This follows from the fact that the chordal metric is $\GL_2(\mathcal{O})$-invariant. 

\begin{lemma}\label{lem:gaussnumberest}
Fix $\zeta_0\in \hberk$ of type II, and let $\zeta_1,... \zeta_k$ satisfy $\phi^n(\zeta_i) = \zeta_0$. Choose some $\gamma\in \PGL_2(K)$ with $\gamma(\zetaG) = \zeta_0$, and let $\mathcal{L}_{\phi^\gamma}$ be the Lipschitz constant for the action of $\phi$ on $\mathbb{P}^1(K)$ with respect to the chordal metric. Then for each $i = 1, ..., k$, we have $$\rho(\zeta_i, \zeta_0) \leq n\log_v\mathcal{L}_{\phi^\gamma}\ .$$
\end{lemma}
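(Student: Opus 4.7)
The plan is to transport the problem to the Gauss point via conjugation, convert the $\rho$-distance into a statement about diameters, and then iterate a one-step chordal Lipschitz bound. Since $\rho$ is $\PGL_2(K)$-invariant and $(\phi^\gamma)^n = \gamma^{-1} \circ \phi^n \circ \gamma$, after replacing $\phi$ by $\phi^\gamma$ and each $\zeta_i$ by $\gamma^{-1}(\zeta_i)$ we may assume $\zeta_0 = \zetaG$. Because rational maps preserve the type of a point, each $\zeta_i$ is itself type II; and for type II points, the general formula $\rho(x,y) = 2\log_v \diamG(x\wedge_{\zetaG} y) - \log_v \diamG(x) - \log_v \diamG(y)$ evaluated at $y=\zetaG$ reduces to $\rho(\zeta, \zetaG) = -\log_v \diamG(\zeta)$. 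It therefore suffices to produce a lower bound of the form $\diamG(\zeta_i) \geq \mathcal{L}_{\phi^\gamma}^{-n}$.

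The key step will be to establish the one-step inequality
\[
\diamG(\phi(\zeta)) \leq \mathcal{L}_{\phi^\gamma} \cdot \diamG(\zeta)
\]
for every type II $\zeta \in \hberk$, where $\phi$ now denotes the conjugated map. I would prove this by identifying $\zeta$ with its associated chordal disc $B(a,r)$, where $r = \diamG(\zeta)$ is the chordal radius. Because the chordal metric on $\mathbb{P}^1(K)$ is an ultrametric and $\phi$ is chordal Lipschitz with constant $\mathcal{L}_{\phi^\gamma}$, one has $\phi(B(a,r)) \subseteq B(\phi(a), \mathcal{L}_{\phi^\gamma} \cdot r)$. Since a rational map sends Berkovich discs to Berkovich discs (cf.\ \cite{BR} Propositions 2.18--2.19), the image point $\phi(\zeta)$ corresponds to a chordal disc of radius at most $\mathcal{L}_{\phi^\gamma} \cdot r$, giving the desired inequality.

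Iterating this one-step bound $n$ times and using $\phi^n(\zeta_i) = \zetaG$ together with $\diamG(\zetaG) = 1$ yields
\[
1 = \diamG(\phi^n(\zeta_i)) \leq \mathcal{L}_{\phi^\gamma}^n \cdot \diamG(\zeta_i),
\]
so that $\diamG(\zeta_i) \geq \mathcal{L}_{\phi^\gamma}^{-n}$, and applying $-\log_v$ gives the claimed bound. The main obstacle I anticipate is the one-step inequality itself: it requires carefully matching the Berkovich diameter $\diamG$ with the chordal radius on each of the two branches off $\zetaG$, using the fact that $z \mapsto 1/z$ is a chordal isometry to handle the branch toward $\infty$ consistently with the definition of $\diamG$ given in Section~\ref{sect:background}.
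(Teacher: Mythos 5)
Your reduction to $\zeta_0=\zetaG$, the identity $\rho(\zeta,\zetaG)=-\log_v\diamG(\zeta)$, and the plan of iterating a one-step bound are all fine, but the justification of the one-step inequality $\diamG(\phi(\zeta))\leq \mathcal{L}_{\phi^\gamma}\diamG(\zeta)$ — which is where all the real content sits — has a genuine gap. The chordal Lipschitz bound does give $\phi(B(a,r))\subseteq B(\phi(a),\mathcal{L}_{\phi^\gamma}r)$, but your next sentence, that ``the image point $\phi(\zeta)$ corresponds to a chordal disc of radius at most $\mathcal{L}_{\phi^\gamma}r$'' by \cite{BR} Propositions 2.18--2.19, conflates the Berkovich image point with the set-theoretic image of the disc of type I points below $\zeta$. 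Those propositions identify $\phi(\zeta_{a,r})$ with the point of the image disc only when $\phi$ has no poles in $D(a,r)$; when a pole is present the image of the disc is not a disc at all, and the correct statement (the one the paper actually uses) is about images of closed affinoids obtained by puncturing at finitely many subdiscs. Nothing in the hypotheses rules out poles of $\phi$ in the discs below $\zeta_i$ or below the intermediate points $\phi^j(\zeta_i)$ — indeed for preimages of $\zetaG$ under $\phi^n$ this is the typical situation — so as written the decisive step is asserted rather than proved, with a citation that does not cover the case that matters.

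The inequality you want is true, and there are two natural ways to close the gap. The paper's route avoids the one-step induction altogether: applying the affinoid form of \cite{BR} Proposition 2.18 to $\phi^n$, the image of the closed affinoid $D(0,r)\setminus\bigcup_i D(a_i,r)^-$ is $D(0,1)\setminus\bigcup_k D(b_k,1)^-$; choosing $x,y$ in the image affinoid with $||x,y||=1$ and pulling them back to $z,w\in D(0,r)$, the iterated chordal Lipschitz bound gives $1\leq \mathcal{L}_{\phi^\gamma}^n\,||z,w||\leq \mathcal{L}_{\phi^\gamma}^n r$ in one stroke. Alternatively, you can salvage your one-step statement by a weak-continuity argument: choose type I points $z_j$ lying in pairwise distinct directions of $T_\zeta\setminus\{\vv_{\zetaG}\}$, so that $z_j\to\zeta$ weakly and hence $\phi(z_j)\to\phi(\zeta)$ weakly; every $\phi(z_j)$ lies in $B(\phi(a),\mathcal{L}_{\phi^\gamma}r)$, and (after moving this ball into the unit disc by an element of $\GL_2(\mathcal{O})$, or trivially if $\mathcal{L}_{\phi^\gamma}r\geq 1$) its closure in $\pberk$ is a Berkovich closed disc, which is weakly closed and all of whose points have $\diamG$ at most $\mathcal{L}_{\phi^\gamma}r$; hence $\diamG(\phi(\zeta))\leq \mathcal{L}_{\phi^\gamma}r$. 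With either repair your argument becomes complete and gives the same bound as the paper; without one, the step fails precisely in the pole case.
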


\begin{proof}
We may conjugate $\phi$ by an element $\gamma \in \PGL_2(K)$ that satisfies $\gamma(\zetaG) = \zeta_0$. While such a $\gamma$ is not unique, it is uniquely determined up to precomposition by an element $\PGL_2(\mathcal{O})$. Fix an index $i$, and let $r_i = \rho(\zeta_0, \zeta_i)$. We can find $\tau_i \in \PGL_2(\mathcal{O})$ so that $\gamma \circ \tau_i(\zeta_{0, r_i}) = \zeta_i$. Replacing $\phi$ by $\phi^{\gamma\circ \tau_i}$, we may assume $\zeta_0 = \zetaG$, and $\zeta_i = \zeta_{0,r}$ for some $r$. By the $\PGL_2(K)$-invariance of $\rho$, it is enough to estimate $\rho(\zeta_i, \zeta_0) = \rho(\zeta_{0,r}, \zetaG) = -\log_v r$. 

We use a description of the action of $\phi$ on $\pberk$ given in \cite{BR} Proposition 2.18. We can find $a_1, a_2, ..., a_k\in D(0, r)$ and $b_1, ..., b_s \in D(0,1)$ so that the image of the closed affinoid $D(0, r)\setminus \cup D(a_i, r)^-$ under $\phi^n$ is the closed affinoid $D(0,1)\setminus \cup D(b_k, 1)^-$. Choose two points $x,y\in D(0, 1)\setminus \cup D(b_k, 1)$ with $||x,y|| = 1$, and write $x=\phi^n(z), y=\phi^n(w)$ for $z,w\in D(0,r)\setminus \cup D(a_i, r)$. We find
\begin{align*}
1=|x-y| = ||x,y|| &\leq \mathcal{L}_\phi ||\phi^{n-1}(z), \phi^{n-1}(w)||\\
& \leq \dots\\
& \leq \mathcal{L}_\phi^n ||z,w||\\
& \leq \mathcal{L}_\phi^n r\ .
\end{align*} Thus we have the lower bound $$\mathcal{L}_\phi^{-n} \leq r\ .$$ In particular, $$\rho(\zetaG, \zeta_{0,r}) = \log_v\left(\frac{1}{r}\right) \leq \log_v \mathcal{L}_\phi^n = n\log_v \mathcal{L}_\phi\ .$$

Translating back to the original map $\phi$ and the original point $\zeta_0$ gives the assertion in the lemma.
\end{proof}

In their work on Lipschitz constants for $\phi$, Rumely and Winburn define the following constants:
\begin{itemize}
\item The root-pole number of $\phi$ is given $$\RP(\phi) = \min \{ ||\alpha, \beta|| \ : \ \alpha, \beta\in \mathbb{P}^1(K),\ \phi(\alpha) = 0,\ \phi(\beta) = \infty\}\ .$$
\item The Gauss preimage radius of $\phi$ is $$\GPR(\phi) = \min \{ \diam_{\zetaG} (x)\ : \ x\in \pberk, \phi(x) = \zetaG\}\ .$$
\item The Ball-mapping radius of $\phi$ is $$S_0(\phi) = \sup\{0<r\leq 1\ : \ \textrm{for all } a\in \mathbb{P}^1(K),\ \phi(B(a,r)^-) \neq \mathbb{P}^1(K)\}\ .$$
\end{itemize}

In their work, Rumely and Winburn show the following

\begin{prop}(Rumely-Winburn, \cite{RW}) Let $\phi\in K(z)$ have degree $d\geq 1$. Then $$0< \GPR(\phi) \leq \RP(\phi) \leq S_0(\phi)\leq 1\ .$$ \end{prop}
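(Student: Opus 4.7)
The plan is to establish the four inequalities in turn. The bound $S_0(\phi) \leq 1$ is immediate from the supremum being taken over $r \leq 1$. The positivity $\GPR(\phi) > 0$ follows because $\phi$ preserves Berkovich type (so every preimage of the type II point $\zetaG$ is itself type II), the fiber $\phi^{-1}(\zetaG)$ consists of at most $d$ points, and each type II point has $\diam_{\zetaG}(x) \in |K^\times| \cap (0,1]$; the minimum of finitely many positive values is positive.

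For the key inequality $\GPR(\phi) \leq \RP(\phi)$, fix any $\alpha, \beta \in \mathbb{P}^1(K)$ with $\phi(\alpha) = 0$ and $\phi(\beta) = \infty$. The image $\phi([\alpha, \beta]) \subseteq \pberk$ is connected and contains $0$ and $\infty$; since $\pberk$ is uniquely path connected, this image must contain the whole segment $[0, \infty]$ and in particular $\zetaG$. Hence some $x \in [\alpha, \beta]$ satisfies $\phi(x) = \zetaG$. The remaining geometric ingredient is the identity
\[
\max_{y \in [\alpha, \beta]} \diam_{\zetaG}(y) \;=\; \diam_{\zetaG}(\alpha \wedge_{\zetaG} \beta) \;=\; ||\alpha, \beta||,
\]
which I would verify by a case analysis depending on whether $\alpha, \beta$ lie in the same $\zetaG$-branch of $\pberk$ or in different branches, using the inversion formula $\psi(\zeta_{a,r}) = \zeta_{1/a,\, r/|a|^2}$ (valid for $|a| > r$) to handle points outside the closed unit disc. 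Granted this identity, $\diam_{\zetaG}(x) \leq ||\alpha, \beta||$, so $\GPR(\phi) \leq ||\alpha, \beta||$; minimizing over admissible pairs $(\alpha, \beta)$ yields $\GPR(\phi) \leq \RP(\phi)$.

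For $\RP(\phi) \leq S_0(\phi)$, fix any $r > S_0(\phi)$. By definition of the supremum, there exists $a \in \mathbb{P}^1(K)$ with $\phi(B(a, r)^-) = \mathbb{P}^1(K)$, so in particular $B(a, r)^-$ contains both a root $\alpha$ and a pole $\beta$ of $\phi$. The non-Archimedean chordal metric is ultrametric, so open chordal balls have diameter strictly less than their radius, whence $||\alpha, \beta|| < r$. Thus $\RP(\phi) < r$ for every $r > S_0(\phi)$, and letting $r \searrow S_0(\phi)$ gives the claim.

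I expect the main technical point to be verifying the chordal–diameter identity $\diam_{\zetaG}(\alpha \wedge_{\zetaG} \beta) = ||\alpha, \beta||$, since it requires separate treatment of points inside and outside the unit disc and a careful use of inversion to compute $\diam_{\zetaG}$ beyond $\zetaG$. Once that identity is in place, each inequality in the chain is a short consequence of connectivity in $\pberk$, the ultrametric property of the chordal metric, and the definitions of $\GPR$, $\RP$, and $S_0$.
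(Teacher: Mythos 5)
Your chain of inequalities is correct, but there is nothing in the paper to compare it against: the proposition is quoted from unpublished work of Rumely and Winburn (\cite{RW}) and the paper gives no proof of it. Taken on its own terms, your argument works. The positivity of $\GPR(\phi)$ is right for the reason you give (type preservation plus finiteness of the fiber $\phi^{-1}(\zetaG)$, each preimage being type II and hence at finite $\rho$-distance from $\zetaG$). The ``technical'' identity you flag, $\max_{y\in[\alpha,\beta]}\diam_{\zetaG}(y)=\diam_{\zetaG}(\alpha\wedge_{\zetaG}\beta)=||\alpha,\beta||$, needs no fresh case analysis: since $\diam_{\zetaG}(y)=q_v^{-\rho(y,\zetaG)}$, the maximum over the segment is attained at the point of $[\alpha,\beta]$ closest to $\zetaG$, which is the wedge point, and the equality $\diam_{\zetaG}(\alpha\wedge_{\zetaG}\beta)=\delta(\alpha,\beta)_{\zetaG}=||\alpha,\beta||$ is the standard fact that the Hsia kernel relative to $\zetaG$ extends the chordal metric on type I points (\cite{BR}, Chapter 4). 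In the step producing $x\in[\alpha,\beta]$ with $\phi(x)=\zetaG$ you implicitly use that a connected subset of $\pberk$ containing $0$ and $\infty$ contains all of $[0,\infty]$; this is correct, since any $\zeta\in(0,\infty)$ separates $0$ from $\infty$ into distinct open components $B_{\vv}(\zeta)^-$. Finally, in the step $\RP(\phi)\leq S_0(\phi)$, the phrase ``open balls have diameter strictly less than their radius'' is slightly overstated (the supremum of pairwise distances could equal $r$), but all you need is the strong triangle inequality applied to the specific pair $\alpha,\beta\in B(a,r)^-$, giving $||\alpha,\beta||<r$; and the boundary case $r>1$ (relevant only if $S_0(\phi)=1$) is harmless because then $B(a,r)^-=\mathbb{P}^1(K)$ and the same conclusion holds trivially. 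So the proposal is a sound, self-contained proof of the quoted result.
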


This, together with Lemma~\ref{lem:gaussnumberest} gives 
\begin{lemma}\label{lem:gausspreimageest}
Let $\phi\in K(z)$ have degree $d\geq 1$, and let $\mathcal{L}_\phi$ denote the Lipschitz constant for the action of $\phi$ on $\mathbb{P}^1(K)$ with respect to the chordal metric. Then $$\mathcal{L}_\phi^{-n}\leq \GPR(\phi^n) \leq \RP(\phi^n)\ .$$
\end{lemma}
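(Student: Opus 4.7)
The upper inequality $\GPR(\phi^n) \leq \RP(\phi^n)$ is not new: it is obtained by applying the Rumely--Winburn proposition stated immediately above the lemma to the iterate $\phi^n$ in place of $\phi$. So the whole task reduces to proving the lower bound $\mathcal{L}_\phi^{-n} \leq \GPR(\phi^n)$.

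My plan is to apply Lemma~\ref{lem:gaussnumberest} with $\zeta_0 = \zetaG$. Because $\zetaG$ is of type II and rational maps preserve the type of a point, every $x \in \pberk$ with $\phi^n(x) = \zetaG$ is automatically a type II point, so these are exactly the points $\zeta_i$ the lemma considers. I can take $\gamma$ to be the identity of $\PGL_2(K)$, so that $\mathcal{L}_{\phi^\gamma} = \mathcal{L}_\phi$. The lemma then gives
\begin{equation*}
\rho(x, \zetaG) \leq n \log_v \mathcal{L}_\phi
\end{equation*}
for every preimage $x$ of $\zetaG$ under $\phi^n$.

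The last step is to convert this $\rho$-bound into a lower bound on $\diam_{\zetaG}(x)$. Using the definition of $\rho$ from the background section with $y = \zetaG$, I note that $x \wedge_{\zetaG} \zetaG = \zetaG$ and $\diam_{\zetaG}(\zetaG) = 1$, whence
\begin{equation*}
\rho(x, \zetaG) \;=\; -\log_v \diam_{\zetaG}(x).
\end{equation*}
Combining this with the previous inequality yields $\diam_{\zetaG}(x) \geq \mathcal{L}_\phi^{-n}$. Taking the minimum over all preimages of $\zetaG$ under $\phi^n$ produces $\GPR(\phi^n) \geq \mathcal{L}_\phi^{-n}$, as desired.

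There is no real obstacle here; the content is already packaged in Lemma~\ref{lem:gaussnumberest}. The only small subtlety worth being careful about is the bookkeeping on Lipschitz constants: one should use Lemma~\ref{lem:gaussnumberest} applied to the map $\phi$ iterated $n$ times (which produces the factor $n\log_v\mathcal{L}_\phi$), rather than applying it to the map $\phi^n$ at depth $1$ (which would produce the weaker $\log_v \mathcal{L}_{\phi^n}$).
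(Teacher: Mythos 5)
Your proposal is correct and follows the paper's own argument: the paper likewise deduces the bound by applying Lemma~\ref{lem:gaussnumberest} with $\zeta_0 = \zetaG$ (so $\gamma$ may be taken trivial) and then converting $\rho(\zeta_i, \zetaG) \leq n\log_v \mathcal{L}_\phi$ into $\diam_{\zetaG}(\zeta_i) \geq \mathcal{L}_\phi^{-n}$ via $\rho(\zeta_i,\zetaG) = -\log_v \diam_{\zetaG}(\zeta_i)$ before taking the minimum, with the inequality $\GPR(\phi^n) \leq \RP(\phi^n)$ coming from the Rumely--Winburn proposition applied to $\phi^n$. Your extra remarks (type II preimages, using $n$ iterations of $\phi$ rather than $\mathcal{L}_{\phi^n}$) just make explicit what the paper leaves implicit.
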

\begin{proof}
The conclusion of Lemma~\ref{lem:gaussnumberest} gives that, for each $\zeta_i$ satisfying $\phi^n(\zeta_i) = \zetaG$, we have $$\rho(\zeta_i, \zetaG) \leq n\log_v \mathcal{L}_\phi\ .$$ Inserting this into the definition of $\diam_{\zetaG}(x)$ and taking the minimum now gives the result.
\end{proof}

A fact that will be used many times below is that if, for some $r<1$, $D(0,r)$ contains a root $\alpha$ and a pole $\beta$ for $\phi$, then we can bound $r$ below by $$\mathcal{L}_\phi^{-n} \leq ||\alpha, \beta|| = |\alpha-\beta| \leq r\ .$$

\subsection{Bounds Concerning Critical Points and Periodic Points}

We now prove two technical lemmas pertaining to the proximity of critical points and $n$-periodic points. The first is a modification of a lemma of Przytycki (see \cite{Prz} Lemma 1). Recall that we denote by $B(a,r)$ the closed disc of radius $r$ around $a$ with respect to the chordal metric. We will rely also on the fact that $B(a,r) = D(a,r)$ whenever $|a|\leq 1$ and $r<1$.

\begin{lemma}\label{lem:Przytycki}
Let $\mathcal{L}_\phi$ denote a Lipschitz constant for $\phi$ on $\mathbb{P}^1(K)$ with respect to the chordal metric. There exists a constant $0<A_\phi <1$ depending only on $\phi$ such that the following holds: for any critical point $c\in \mathbb{P}^1(K)$ of $\phi$ and any $n>0$, if $\epsilon < A_\phi \cdot \mathcal{L}_\phi^{-(n-1)}$ and $\phi^n(B(c,\epsilon)) \cap B(c, \epsilon) \neq \emptyset$ for some $n$, then $\phi^n(B(c, \epsilon)) \subseteq B(c, \epsilon)$.
\end{lemma}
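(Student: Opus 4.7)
The plan is a two-step local-to-global analysis combined with the non-Archimedean dichotomy that two closed chordal discs in $\mathbb{P}^1(K)$ are either disjoint or nested. The single-iterate behaviour near $c$ is controlled by the critical multiplicity (giving a gain of an exponent $e_c \geq 2$), while the remaining $n-1$ iterates are controlled by $\mathcal{L}_\phi$; balancing these gives the stated radius cutoff.

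First, I would set up the local picture at each critical point $c$ of $\phi$. After pre- or post-composing by $z \mapsto 1/z$ if necessary (a chordal isometry, so $\mathcal{L}_\phi$ is unchanged), we may assume $|c|, |\phi(c)| \leq 1$, so the chordal and usual metrics agree on small balls about $c$ and $\phi(c)$. Writing the Taylor expansion
$$
\phi(z) - \phi(c) = a_c (z-c)^{e_c}\bigl(1 + h_c(z-c)\bigr),
$$
with $e_c = \deg_c \phi \geq 2$, $a_c \in K^{\times}$ and $h_c(0) = 0$, the Newton polygon of the tail produces a positive threshold $\rho_c$, depending only on $\phi$, so that the leading term dominates on $D(c,\epsilon)$ whenever $\epsilon < \rho_c$. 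For such $\epsilon$ one obtains the disc containment
$$
\phi(B(c,\epsilon)) \subseteq B\bigl(\phi(c),\ |a_c|\,\epsilon^{e_c}\bigr).
$$

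Second, I would set
$$
A_\phi \;:=\; \tfrac{1}{2}\min\Bigl(1,\ \min_c \rho_c,\ \min_c |a_c|^{-1/(e_c-1)}\Bigr),
$$
the inner minima ranging over the finitely many critical points of $\phi$; this gives a constant $0 < A_\phi < 1$ depending only on $\phi$. Now assume $\epsilon < A_\phi\,\mathcal{L}_\phi^{-(n-1)}$. Since $\mathcal{L}_\phi \geq 1$, $\epsilon < A_\phi < \rho_c$, so the local step applies; iterating the chordal Lipschitz bound over the remaining $n-1$ steps gives
$$
\phi^n(B(c,\epsilon)) \subseteq B\bigl(\phi^n(c),\ \mathcal{L}_\phi^{n-1}\,|a_c|\,\epsilon^{e_c}\bigr).
$$
The key algebraic estimate is then
$$
\mathcal{L}_\phi^{n-1}\,|a_c|\,\epsilon^{e_c-1} \;\leq\; |a_c|\,A_\phi^{e_c-1}\,\mathcal{L}_\phi^{(n-1)(2-e_c)} \;\leq\; |a_c|\,A_\phi^{e_c-1} \;\leq\; 1,
$$
where the first inequality uses $\epsilon < A_\phi\mathcal{L}_\phi^{-(n-1)}$, the second uses $e_c \geq 2$ together with $\mathcal{L}_\phi \geq 1$, and the third is the definition of $A_\phi$. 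Equivalently, the containing disc has chordal radius at most $\epsilon$.

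Finally, the ultrametric dichotomy closes the proof: if $B(\phi^n(c), r)$ has $r \leq \epsilon$ and meets $B(c,\epsilon)$, then picking $y$ in the intersection yields $||\phi^n(c), c|| \leq \max(r,\epsilon) = \epsilon$, so $B(\phi^n(c), r) \subseteq B(c,\epsilon)$, giving the required containment $\phi^n(B(c,\epsilon)) \subseteq B(c,\epsilon)$.

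The step I expect to be the main obstacle is making the local expansion uniform across all critical points in the chordal metric, since a priori the thresholds $\rho_c$ and coefficients $|a_c|$ could behave badly for different critical points (or for the point at infinity). The finiteness of the critical set together with the chordal invariance of $z \mapsto 1/z$ and the $\GL_2(\mathcal{O})$-invariance of $\mathcal{L}_\phi$ reduce everything to a single positive constant $A_\phi$, as required.
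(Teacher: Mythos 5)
Your proof is correct and takes essentially the same route as the paper: one power-series contraction step at the critical point (exponent at least $2$), followed by $n-1$ applications of the chordal Lipschitz bound, a threshold constant chosen so the resulting radius drops below $\epsilon$, and the ultrametric inequality to upgrade nonempty intersection to containment. The only cosmetic differences are that the paper normalizes by conjugating with an element of $\PGL_2(\mathcal{O})$ and bounds pairwise distances (the chordal diameter of the image) using $|z|^k \leq |z|^2$, whereas you work in isometric charts and track a ball centered on the orbit of $c$ with the full multiplicity $e_c$.
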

\begin{proof}
We claim that we can conjugate $\phi$ by an element of $\PGL_2(\mathcal{O})$ so as to assume that $c=0$ and $|\phi(c)|\leq 1$. To see that this is possible, we consider two cases. If $c, \phi(c)$ lie in the same connected component $B_{\vv_a}(\zetaG)^-\subseteq\pberk \setminus \{\zetaG\}$, we can lift a M\"obius transformation $\widetilde{\gamma}\in \PGL_2(k)$ with $\gamma(a) = 0$ to a map $\gamma\in\PGL_2(\mathcal{O})$ which sends $B_{\vv_a}(\zetaG)^-$ to $B_{\vv_0}(\zetaG)^-$. Conjugating by $\gamma$ gives the desired configuration. If $c, \phi(c)$ lie in different connected components of $\pberk \setminus \{\zetaG\}$, then we can find an element of $\gamma\in \PGL_2(K)$ sending the triple $(c, \zetaG, \phi(c))$ to the triple $(0, \zetaG, 1)$ (see \cite{BR} Corollary 2.13); necessarily $\gamma$ fixes $\zetaG$, hence $\gamma\in \PGL_2(\mathcal{O})$, and conjugation by $\gamma$ achieves the desired configuration.\\

Writing $\phi$ as a Taylor series about $c=0$, we have $$\phi(z) = a_0 +a_2z^2 + ....\ .$$ For $k\geq 2$, let $a_k$ denote the first non-zero term in this expansion. We can find $\tilde{A}_\phi(c)<1$ depending only on $\phi,c$ so that, for $\epsilon < \tilde{A}_\phi(c)$ and $|z| < \epsilon <1$, we have 
\begin{equation}\label{eq:firstepsilonconstraint}
|\phi(z)-a_0| = |a_k|\cdot |z|^k < 1\ .
\end{equation}
In this case, since $|a_0|=|\phi(c)| \leq 1$, the inequality in (\ref{eq:firstepsilonconstraint}) implies that $|\phi(z)|\leq 1$ for all $z\in D(0, \epsilon)=B(0, \epsilon)$, and\begin{align*}
||\phi(z), a_0|| = |\phi(z) - a_0|&= |a_k|\cdot|z|^k  \\
&\leq |a_k| \cdot |z|^2\\
& < |a_k| \epsilon^2\ .
\end{align*} So, if $z,w\in B(c,\epsilon)=D(c,\epsilon)$ we find (using the ultrametric inequality at the last step)
\begin{align*}
||\Phi^n(z), \Phi^n(w)|| &\leq\mathcal{L}_\phi ||\Phi^{n-1}(z), \Phi^{n-1}(w)||\\
& \leq \dots \\
& \leq \mathcal{L}_\phi^{n-1} ||\Phi(z), \Phi(w)||\\
& \leq \mathcal{L}_\phi^{n-1}\cdot |a_k|\epsilon^2\ .
\end{align*}

Let $A_\phi(c):=\min(\tilde{A}_\phi(c), \frac{1}{|a_k|})<1$; then by the arguments above if $\epsilon < A_\phi(c)\cdot \mathcal{L}_\phi^{-(n-1)}$ we find $$||\Phi^n(z), \Phi^n(w)|| \leq \mathcal{L}_\phi^{n-1}\cdot |a_k|\epsilon^2< \epsilon\ .$$

In particular, the chordal diameter of $\phi^n(B(c,\epsilon))$ is bounded above by $\epsilon$, hence the condition $\phi^n(B(c, \epsilon)) \cap B(c, \epsilon) \neq \emptyset$ implies (by the ultrametric inequality) $\phi^n(B(c, \epsilon)) \subseteq B(c, \epsilon)$.  Letting $A_\phi := \min_{c \textrm{ is a critical point}} A_\phi(c)$ gives the required constant. 

\end{proof}

The preceeding lemma can be thought of as a quantitative expression of the fact that if a critical point $c$ of $\phi$ is very close to an $n$-periodic point, then both must lie in the Fatou set. The next lemma will give a similar relationship for a critical point \emph{of the n-th iterate} $\phi^n$ and an $n$-periodic point. The idea is that if $B(c,r)$ contains a critical point $c$ of $\phi^n$ and an $n$-periodic point $f$, then some image $\phi^j B(c,r)$ will contain the critical point $\phi^j(c)$ of $\phi$ along with the $n$-periodic point $\phi^j(f)$ of $\phi$. We then translate the quantitative results from the preceeding lemma to $\phi^j(B(c, r))$.

\begin{lemma}\label{lem:critptfatou}
Let $\mathcal{L}_\phi$ denote the Lipschitz constant for the action of $\phi$ on $\mathbb{P}^1(K)$ with respect to the chordal metric, and let $B_\phi:=\min(1, A_\phi)$ where $A_\phi$ is the constant in Lemma~\ref{lem:Przytycki}. Let $n\geq 1$. Suppose that  $D(0,r)$ contains an $n$-periodic point of $\phi$ and a critical point of $\phi^n$. If $r < B_\phi \cdot\mathcal{L}_\phi^{-2(n-1)}$, then $D(0,r) \subseteq \mathcal{F}(\phi)$.
\end{lemma}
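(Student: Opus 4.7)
The plan is to transport the configuration in $D(0,r)$ forward so that Lemma~\ref{lem:Przytycki} may be applied directly to $\phi$, not $\phi^n$. If $c\in D(0,r)$ is the critical point of $\phi^n$, the chain rule
\[
(\phi^n)'(c)=\prod_{i=0}^{n-1}\phi'(\phi^i(c))=0
\]
forces some $c':=\phi^j(c)$ with $0\leq j\leq n-1$ to be an honest critical point of $\phi$. Let $f\in D(0,r)$ denote the given $n$-periodic point and put $f':=\phi^j(f)$; this $f'$ is again $n$-periodic for $\phi$.

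Since $B_\phi\leq 1$ and $\mathcal{L}_\phi\geq 1$, the hypothesis forces $r<1$, so on $D(0,r)$ the chordal metric coincides with $|\cdot|$ and any two points lie within chordal distance $r$. Iterating the Lipschitz bound,
\[
||\phi^j(z),\,c'||=||\phi^j(z),\,\phi^j(c)||\leq \mathcal{L}_\phi^j\,||z,c||\leq \mathcal{L}_\phi^j\,r \qquad \text{for all } z\in D(0,r),
\]
so $\phi^j(D(0,r))\subseteq B(c',\epsilon)$ where $\epsilon:=\mathcal{L}_\phi^j r$. Using $j\leq n-1$ together with the hypothesis $r<B_\phi\mathcal{L}_\phi^{-2(n-1)}$,
\[
\epsilon\leq \mathcal{L}_\phi^{n-1}r<B_\phi\mathcal{L}_\phi^{-(n-1)}\leq A_\phi\mathcal{L}_\phi^{-(n-1)},
\]
which places $\epsilon$ within the range of Lemma~\ref{lem:Przytycki} for the critical point $c'$. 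Because $f'\in B(c',\epsilon)$ and $\phi^n(f')=f'$, we have $\phi^n(B(c',\epsilon))\cap B(c',\epsilon)\neq\emptyset$, and Lemma~\ref{lem:Przytycki} yields $\phi^n(B(c',\epsilon))\subseteq B(c',\epsilon)$.

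To finish, I would invoke the standard fact from non-Archimedean dynamics that a chordal ball which is strictly forward-invariant under some iterate of $\phi$ lies in the Fatou set: the proof of Lemma~\ref{lem:Przytycki} shows the chordal diameter of $\phi^n(B(c',\epsilon))$ is at most $\mathcal{L}_\phi^{n-1}|a_k|\epsilon^2<\epsilon$, so the $\phi^n$-iterates of $B(c',\epsilon)$ nest down to an attracting periodic point, placing the ball in an attracting basin and hence in $\mathcal{F}(\phi^n)=\mathcal{F}(\phi)$. By total invariance of the Fatou set,
\[
D(0,r)\subseteq \phi^{-j}\bigl(\phi^j(D(0,r))\bigr)\subseteq \phi^{-j}(B(c',\epsilon))\subseteq \mathcal{F}(\phi).
\]
The main obstacle is this last invocation (as developed in Baker--Rumely or by Rivera-Letelier) translating strict forward invariance of a classical disc into membership in $\mathcal{F}(\phi)$; everything else is elementary given Lemma~\ref{lem:Przytycki}.
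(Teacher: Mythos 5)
Your proposal is correct and follows essentially the same route as the paper: push $D(0,r)$ forward by $\phi^j$ so that a chordal ball of radius $\epsilon=\mathcal{L}_\phi^j r$ contains both a critical point of $\phi$ and the $n$-periodic point $\phi^j(f)$, apply Lemma~\ref{lem:Przytycki} to get $\phi^n$-invariance of that ball, and pull back by complete invariance of the Fatou set. The paper likewise asserts the step ``forward-invariant chordal ball $\Rightarrow$ contained in $\mathcal{F}(\phi)$'' without further argument (it follows from the standard non-Archimedean Montel-type criterion, since the forward images omit more than two points), so your flagged ``obstacle'' is no more of a gap than in the original.
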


\begin{proof}
It will be more convenient to work with balls in the chordal metric. Let $f\in D(0,r)$ denote an $n$-periodic point of $\phi$, and consider the sets $D(0, r)=B(0,r), \phi(B(0,r)), \phi^2(B(0,r)), ..., \phi^{n-1}(B(0,r))$. Since $\phi^n$ has a critical point in $D(0,r)$, the map $\phi$ must have a critical point in some $\phi^j(B(0,r))$, $j=0,1,...,n-1$. As in the proof of Lemma~\ref{lem:gaussnumberest}, we can estimate $$||\phi^j(z), \phi^j(w)|| \leq \mathcal{L}_\phi^j ||z,w|| \leq \mathcal{L}_\phi^j\cdot  r\ , \ \forall z,w\in D(0,r) = B(0,r)\ .$$

Therefore $\phi^j(B(0,r))\subseteq B(\phi^j(0), \epsilon)$ where $\epsilon= \mathcal{L}_\phi^j\cdot r$. We note that $\epsilon < 1$: the constant $r$ was chosen so that $r<B_\phi\cdot  \mathcal{L}_\phi^{-2(n-1)}$; since $\mathcal{L}_\phi \geq 1$, this implies $r<\mathcal{L}_\phi^{-2(n-1)}<\mathcal{L}_\phi^{-j}$ for each $j=0, 1, ..., n-1$. Hence $\epsilon=\mathcal{L}_\phi^j r<1$.

In particular, $B(\phi^j(0), \epsilon)$ is a chordal disc containing a critical point of $\phi$ and, since it contains the $n$-periodic point $\phi^j(f)$, it satisfies $\phi^n(B(\phi^j(0),\epsilon)) \cap B(\phi^j(0), \epsilon) \neq \emptyset$. Thus if $r$ satisfies $$\mathcal{L}_\phi^j r< B_\phi \cdot \mathcal{L}_\phi^{-(n-1)}\ ,$$ then Lemma~\ref{lem:Przytycki} ensures that $\phi^n(B(\phi^j(0), \epsilon))\subseteq B(\phi^j(0), \epsilon) \subseteq \mathcal{F}(\phi)$. In particular, $B(\phi^j(0), \epsilon) \subseteq \mathcal{F}(\phi)$, and so $$D(0,r) = B(0,r) \subseteq \phi^{-j} (\phi^j(B(0,r))) \subseteq \mathcal{F}(\phi)\ .$$

\end{proof}

\section{Bounds for Weighted Points}\label{sect:bounds}
In this section, we establish the following theorem:

\begin{thm}\label{thm:boundoncrucialset}
Let $K$ be a complete, algebraically closed non-Archimedean valued field of characteristic 0. Let $\phi\in K(z)$ have degree $d\geq 2$.
\begin{itemize}
\item[(A)] Suppose $\phi$ has potential good reduction, and let $P\in \hberk$ be the point at which $\phi$ attains good reduction. Let $\Phi$ be a normalized lift of $\phi$ at $\zetaG$. Then $$\rho(P, \zetaG) \leq \frac{2}{d-1} \log_v |\Res(\Phi)|^{-1}\ .$$ 

\item[(B)] Suppose $\phi$ does not have potential good reduction, and let $\mathcal{L}_\phi$ denote the Lipschitz constant for the action of $\phi$ on $\mathbb{P}^1(K)$ with respect to the chordal metric. Then there exists a constant $B_\phi>0$ depending only on $\phi$ such that the following holds: Suppose that for some $n\geq 1$ and some $P\in \hberk$, we have $w_{\phi^n}(P)>0$. Then \begin{align}\label{eq:maxnotgoodreduction}
\rho(P, \zetaG) \leq \max&\left(n \log_v \mathcal{L}_\phi, 2(n-1)\log_v \mathcal{L}_\phi - \log_v B_\phi +\frac{1}{p-1} \right)\ .
\end{align}
\end{itemize}
\end{thm}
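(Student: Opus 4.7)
For part (A), potential good reduction of $\phi$ at $P$ is equivalent to $\ordRes_\phi(P)=0$, so $P\in\MinResLoc(\phi)$. Rumely's containment $\MinResLoc(\phi)\subseteq B_\rho(\zetaG,\frac{2}{d-1}\ordRes(\phi))$ recalled in the introduction, together with $\ordRes(\phi)=\log_v|\Res(\Phi)|^{-1}$ for a normalized lift, gives the bound immediately.

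For part (B), I would proceed by case analysis on the three ways $P$ acquires positive weight according to Definition~\ref{def:weights}. In each case a preliminary conjugation by an element of $\GL_2(\mathcal{O})$ (possibly composed with the chordal isometry $z\mapsto 1/z$) lets me assume $P=\zeta_{0,r}$ with $0<r\leq 1$ without changing $\mathcal{L}_\phi$, so it suffices to bound $r$ from below since $\rho(P,\zetaG)=-\log_v r$.

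\emph{Repelling case ($\deg_{\phi^n}(P)\geq 2$).} The reduction $\widetilde{\phi^n}$ is a degree-$\geq 2$ self-map of $\mathbb{P}^1(k)$; a counting argument produces both a fixed direction and a critical direction in $T_P$, at least one of each pointing into $D(0,r)$. These lift respectively to a type I $n$-periodic point of $\phi$ and a type I critical point of $\phi^n$ lying in $D(0,r)$. Since type II repelling periodic points lie in the Berkovich Julia set, $D(0,r)\not\subseteq\mathcal{F}(\phi)$, so the contrapositive of Lemma~\ref{lem:critptfatou} forces $r\geq B_\phi\mathcal{L}_\phi^{-2(n-1)}$, which gives $\rho(P,\zetaG)\leq 2(n-1)\log_v\mathcal{L}_\phi-\log_v B_\phi$.

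\emph{Shearing case ($\deg_{\phi^n}(P)=1$, $N_{\textrm{Shearing}}\geq 1$).} Let $\vv$ be a shearing direction with a type I $n$-periodic point $\alpha\in B_\vv(P)^-$. If $B_\vv(P)^-$ contained no pole of $\phi^n$, then $\phi^n$ would map it onto the disjoint disc $B_{(\phi^n)_*\vv}(P)^-$, contradicting $\alpha=\phi^n(\alpha)\in B_\vv(P)^-$. Let $\beta\in B_\vv(P)^-$ be a pole of $\phi^n$; since $\alpha,\beta\in D(0,1)$ one has $\|\alpha,\beta\|=|\alpha-\beta|\leq r$ and $\|\phi^n(\alpha),\phi^n(\beta)\|=\|\alpha,\infty\|=1$. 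The Lipschitz bound $\mathcal{L}_{\phi^n}\leq\mathcal{L}_\phi^n$ then gives $r\geq\mathcal{L}_\phi^{-n}$, i.e.\ $\rho(P,\zetaG)\leq n\log_v\mathcal{L}_\phi$.

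\emph{Branch-point case ($P$ not fixed by $\phi^n$, valence $\geq 3$ in $\Gamma_{\textrm{Fix}}^n$).} At least two type I fixed points $\alpha_1,\alpha_2\in D(0,r)$ of $\phi^n$ lie in distinct directions at $P$, so $|\alpha_1-\alpha_2|=r$. This case I expect to be the main obstacle: neither a Julia-set argument nor a ready pole of $\phi^n$ is directly available. The plan is to apply Faber's non-Archimedean Rolle theorem to $\phi^n(z)-z$ (which vanishes at $\alpha_1,\alpha_2$) to locate a zero of $(\phi^n)'-1$, and iterate to locate a critical point of $\phi^n$, inside a disc of radius at most $p^{1/(p-1)}r$ around $P$. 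This loss factor contributes the $\frac{1}{p-1}$ summand. Once a critical point has been produced near $P$, replaying the Przytycki-style argument behind Lemma~\ref{lem:critptfatou} rules out $r<B_\phi\mathcal{L}_\phi^{-2(n-1)}p^{-1/(p-1)}$, which gives the second entry of the max in (\ref{eq:maxnotgoodreduction}).
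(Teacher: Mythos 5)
Part (A) of your proposal is fine and is essentially the paper's argument (good reduction at $P$ forces $P\in\MinResLoc(\phi)$, then Rumely's ball bound). The serious problem is your repelling case. You assert that since $\widetilde{\phi^n}$ has degree $\geq 2$ at $P$, a counting argument gives a fixed direction and a critical direction pointing into $D(0,r)$, and that these lift to a type I $n$-periodic point and a type I critical point of $\phi^n$ in $D(0,r)$. Neither lift is justified. First, for a \emph{focused} repelling point all type I $n$-periodic points may lie in the single direction toward $\zetaG$, so there need be no periodic point in $D(0,r)$ at all; the paper treats this case separately (Proposition~\ref{prop:focusedrepellingbound}), using Rumely's Tree Intersection Theorem to produce a root and a pole of $\phi^n$ in $D(0,r)$ and then the Lipschitz root--pole estimate, which is where the $n\log_v\mathcal{L}_\phi$ term comes from. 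Second, a direction in which the reduction ramifies need not contain a critical point of $\phi^n$: in positive residue characteristic the reduction can be inseparable (congruent to $z^p$, say), so every direction has multiplicity $p\geq 2$ while the map has only two critical points; wild ramification destroys the disc Riemann--Hurwitz count your ``counting argument'' would need. This is precisely why the paper manufactures the critical point instead via Faber's surplus-multiplicity theorem together with the non-Archimedean Rolle theorem, and why the critical point is only located in the enlarged disc $D\bigl(0, r\cdot|p|^{-1/(p-1)}\bigr)$; that loss is the $\frac{1}{p-1}$ in the stated bound, which is conspicuously missing from your repelling-case estimate.

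Your branch-point case has a second gap: Rolle applied to $\phi^n(z)-z$ produces a zero of $(\phi^n)'-1$, not a critical point of $\phi^n$, and ``iterating'' does not repair this; Lemma~\ref{lem:critptfatou} requires a genuine critical point of $\phi^n$ in the disc alongside an $n$-periodic point. The paper's route is different: assuming no pole of $\phi^n$ in $D(0,r)$, it shows $Q=\phi^n(P)$ lies on $(P,\infty]$ and that every finite fixed direction at $P$ is mapped to the direction at $Q$ toward $0$, which forces a nonzero root of $\phi^n$ in some $B_{\vv}(P)^-$; since $0$ is also a root after the normalization, Rolle applied to $\phi^n$ itself gives the critical point in $D\bigl(0,r\cdot|p|^{-1/(p-1)}\bigr)$. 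Moreover, even granting the critical point, you still must show that $D(0,r)\subseteq\mathcal{F}(\phi)$ is absurd: here $P$ is moved by $\phi^n$, so it is not itself a repelling fixed point, and the paper supplies the Julia-set intersection via the repelling fixed point criterion ($\overline{U_a}\subseteq\phi^n(U_a)$ for the balls in the finite fixed directions), a step absent from your sketch. Finally, a smaller, fixable issue in your shearing case: the shearing direction could be the direction at $P$ toward $\zetaG$, in which case the pole and periodic point you produce lie outside $D(0,r)$; as in the paper, one should instead use surjectivity of $(\phi^n)_*$ on $T_P$ to find a pole in a finite direction and pair it with a type I periodic point lying in a finite direction.
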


We will establish this theorem by considering separately the different types of weighted points. Our first step is to address those maps that have good reduction; this is essentially a restatement of \cite{Ru1} Theorem 0.1:

\begin{prop}\label{prop:goodreductionbound}
Let $\Phi$ be a normalized lift of $\phi$. If $\phi$ has potential good reduction, and $P=\zeta$ is the point at which $\phi$ attains good reduction, then $$\rho(P, \zetaG) \leq \frac{2}{d-1} \log_v |\Res(\Phi)|^{-1}\ .$$
\end{prop}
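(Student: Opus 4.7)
The plan is to reduce directly to Rumely's Theorem 0.1 from \cite{Ru1}, which is quoted in the introduction of the present paper in the form $\MinResLoc(\phi)\subseteq B_\rho(\zetaG,\tfrac{2}{d-1}\ordRes(\phi))$. The only thing that really needs checking is that the hypothesized point $P$ of potential good reduction actually lies in $\MinResLoc(\phi)$; once this is established, the stated inequality is immediate.

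First I would unwind the normalizations. By the definition of $\ordRes_\phi$ recalled in Section~\ref{sect:ordresphi}, one has $\ordRes_\phi(\zetaG)=-\log_v|\Res(\Phi)|$ for any normalized lift $\Phi$, and $\ordRes(\phi)$ in the statement of Rumely's theorem means exactly this quantity. Moreover, because a normalized lift has coefficients in $\mathcal{O}$, its resultant lies in $\mathcal{O}$, so $|\Res(\Phi)|\leq 1$ and hence $\ordRes_\phi(x)\geq 0$ for every $x\in\pberk$ (apply the same observation at every type II point after conjugation).

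Next I would identify $P$ as a minimum. By the definition of potential good reduction recalled in the excerpt, $\phi$ attains good reduction at $P$ exactly when $\ordRes_\phi(P)=0$. Combined with the non-negativity above, this forces $P$ to be a global minimum of $\ordRes_\phi$ on $\pberk$, i.e.\ $P\in\MinResLoc(\phi)$.

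Finally I would quote Rumely's bound: $\MinResLoc(\phi)\subseteq B_\rho\bigl(\zetaG,\tfrac{2}{d-1}\ordRes(\phi)\bigr)$. Substituting $\ordRes(\phi)=-\log_v|\Res(\Phi)|=\log_v|\Res(\Phi)|^{-1}$ yields
\[
\rho(P,\zetaG)\;\leq\;\tfrac{2}{d-1}\log_v|\Res(\Phi)|^{-1},
\]
as required. There is no real obstacle here: the content of the proposition is entirely contained in Rumely's theorem, and the only work is the (trivial) observation that potential good reduction is equivalent to $\ordRes_\phi$ achieving the value $0$, which in turn forces membership in $\MinResLoc(\phi)$.
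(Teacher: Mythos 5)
Your proposal is correct and follows essentially the same route as the paper: identify the point of (potential) good reduction as lying in $\MinResLoc(\phi)$ and then invoke Rumely's Theorem 0.1 bound $\MinResLoc(\phi)\subseteq B_\rho(\zetaG,\tfrac{2}{d-1}\ordRes(\phi))$. Your explicit justification that $\ordRes_\phi\geq 0$ and vanishes at $P$, forcing $P\in\MinResLoc(\phi)$, is a slightly more detailed version of the same step the paper asserts directly.
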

\begin{proof}
If $\phi$ has good reduction, so too does $\phi^n$ for all $n$ (see, e.g., \cite{BR} Theorem 10.17). In this case, the crucial set consists of a single point $\zeta$, which is also the Minimal Resultant Locus of $\phi$. In \cite{Ru1} Theorem 0.1, Rumely established that the Minimal Resultant Locus lies in the ball $B_\rho(\zetaG, \frac{2}{d-1} \ordRes(\phi)) = B_\rho(\zetaG, -\frac{2}{d-1}\log_v |\Res(\Phi)|)$, where $\Phi$ is a normalized lift of $\phi$. Hence the asserted bound holds.
\end{proof}

We are left to consider the case when $\phi$ does not have potential good reduction. Here, we proceed by obtaining bounds for the different types of points appearing in the crucial set.

\begin{prop}\label{prop:focusedrepellingbound}
Let $\Phi$ be a normalized lift of $\phi$, and let $P$ be a focused repelling fixed point for some iterate $\phi^n$. Then $$\rho(P, \zetaG) \leq n \log_v \mathcal{L}_\phi\ .$$
\end{prop}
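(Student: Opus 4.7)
The plan is to exhibit a chordally close root--pole pair of $\phi^n$ inside the open disc corresponding to $P$, and then invoke Lemma~\ref{lem:gausspreimageest}. First, using the $\GL_2(\mathcal{O})$-invariance of $\rho(\cdot,\zetaG)$ and $\mathcal{L}_\phi$, together with the inversion $z\mapsto 1/z$ (which fixes $\zetaG$ and preserves $\mathcal{L}_\phi$), I reduce to $P = \zeta_{0,r}$ with $r\leq 1$; the claim then becomes $r\geq \mathcal{L}_\phi^{-n}$.

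The core of the proof is to find a non-focused inward direction $\vv'$ at $P$ such that $B_{\vv'}(P)^-$ must contain a root--pole pair of $\phi^n$. I would seek a direction $\vv' \neq \vv$ that is fixed by the tangent action $\phi^n_*\colon T_P\to T_P$ with appropriate multiplicity: this tangent map is a rational map on $T_P\cong \mathbb{P}^1(k)$ of degree $d_P := \deg_{\phi^n}(P) \geq 2$, hence has $d_P + 1$ fixed points (counted with multiplicity), the focusing direction $\vv$ being one of them (since any type I fixed point $\alpha$ of $\phi^n$ in direction $\vv$ forces $\phi^n_*(\vv)=\vv$). For such a non-focused inward fixed direction $\vv'$, the ball $B_{\vv'}(P)^-$ corresponds to an open disc $D' \subseteq D(0,r)$ of radius $r$, and by focusedness contains no type I fixed points of $\phi^n$. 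If $\phi^n$ had no poles in $D'$, it would restrict to an analytic self-map of $D'$ of Berkovich degree equal to the multiplicity of $\vv'$, which a non-Archimedean Brouwer/Lefschetz-type argument forces to have a type I fixed point -- contradicting focusedness. So $\phi^n$ has a pole in $D'$, and the standard surgery description around a pole (the pole is surrounded by a small sub-disc mapping onto $\mathbb{P}^1\setminus D'$) forces $\phi^n(D') = \mathbb{P}^1(K)$, producing a root in $D'$ as well.

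With a root $\alpha$ and pole $\beta$ of $\phi^n$ both in $D' \subseteq D(0,1)$, one has $||\alpha,\beta|| = |\alpha-\beta|\leq r$. Lemma~\ref{lem:gausspreimageest} gives $\mathcal{L}_\phi^{-n}\leq \RP(\phi^n)\leq ||\alpha,\beta||$, hence $r \geq \mathcal{L}_\phi^{-n}$ and $\rho(P,\zetaG) = -\log_v r \leq n\log_v \mathcal{L}_\phi$.

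The main obstacle is producing the non-focused inward fixed direction $\vv'$ of suitable multiplicity: in degenerate configurations (e.g., when $\phi^n_*$ is parabolic at $\vv$, concentrating all fixed multiplicity there, or when all non-focused fixed directions happen to coincide with the outward direction at $P$), the Brouwer step must be refined -- possibly by descending to deeper type II points inside $B_\vv(P)^-$ or by appealing to Rivera-Letelier's structural theory of rational maps restricted to Berkovich balls.
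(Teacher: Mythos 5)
Your normalization and your closing step (a root--pole pair of $\phi^n$ inside a disc of radius $r$, then Lemma~\ref{lem:gausspreimageest}) are exactly how the paper finishes, but the middle of your argument has a genuine gap: the non-focused inward fixed direction $\vv'$ you need does not exist --- not merely in ``degenerate configurations,'' but ever. By Rumely's fixed-point formula (\cite{Ru2} Lemma 2.1, the identity $\#F_{\phi^n}(P,\vv) = s_{\phi^n}(P,\vv) + \#\tilde{F}_{\phi^n}(P,\vv)$ quoted in the proof of Proposition~\ref{prop:weightedremotebound}, valid at any fixed type II point that is not id-indifferent), every direction $\vv$ fixed by the reduction has $\#F_{\phi^n}(P,\vv)\geq 1$, i.e.\ $B_{\vv}(P)^-$ contains a type I fixed point of $\phi^n$. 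Focusedness therefore forces the focused direction to be the \emph{only} direction fixed by the tangent map, with all $\deg_{\phi^n}(P)+1$ fixed points of the reduction concentrated there with multiplicity (such totally degenerate reductions do occur, e.g.\ degree-two maps with a unique fixed point of multiplicity three). So the configurations you defer to a later ``refinement'' are in fact all of them, and the Brouwer step never gets started. Moreover, the fixed-point principle you invoke is false as stated: a pole-free analytic self-map of an open disc need not have a type I fixed point (e.g.\ $z\mapsto z+c$ with $|c|$ small); the correct bookkeeping here is precisely the Lemma 2.1 identity above, and it is what rules out your $\vv'$.

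The paper produces the root--pole pair structurally instead. Since $P$ is a repelling fixed point of $\phi^n$, it lies on $\Gamma^n_{\textrm{Fix, Repel}}$, and Rumely's Tree Intersection Theorem (\cite{Ru2} Theorem 4.2) identifies this tree with $\bigcap_{b}\Gamma_{\textrm{Fix},(\phi^n)^{-1}(b)}$. Taking $b=0$ and $b=\infty$, and using that all type I fixed points lie in the single focused direction (so $P\notin\Gamma^n_{\textrm{Fix}}$), membership in these convex hulls forces a root and a pole of $\phi^n$ into the directions away from $\vv_\infty$ when the tree enters $P$ through $\vv_\infty$; when it enters through a finite direction, \cite{Ru2} Proposition 3.1(B) gives that direction positive surplus, so its ball maps onto $\pberk$ and again contains a root and a pole. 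If you want to keep your tangent-map viewpoint, replace the Brouwer step by these inputs (the Lemma 2.1 identity together with the surplus/tree-intersection facts); as written, your proof does not go through.
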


\begin{proof}
If $P=\zetaG$, the assertion is clear, and so we assume $P\neq \zetaG$. Let $\vv_a\in T_{\zetaG}$ be the direction pointing towards $P$, and let $\vv_b\in T_P$ be a direction pointing away from $\zetaG$. Choose a type I point $S\in B_{\vv_b}(P)^-$. Choose $\gamma\in \PGL_2(\mathcal{O})$ so that $\gamma(S) = 0$; then $\gamma(B_{\vv_a}(\zetaG)^-) = B_{\vv_0}(\zetaG)^-$, and $P= \gamma^{-1}(\zeta_{0,r})$, where $\rho(\zetaG, P) = -\log_v r$. Replacing $\phi$ by $\phi^\gamma$, we may assume $P=\zeta_{0,r}$. It suffices to find an upper bound on $-\log_v r$. We consider two cases:
\begin{itemize}
\item[Case 1:] Suppose that the direction $\vv_\infty\in T_P$ is the direction pointing into $\Gamma_{\textrm{Fix, Repel}}$. By Rumely's Tree Intersection theorem (\cite{Ru2}, Theorem 4.2) we have that $$\Gamma_{\textrm{Fix, Repel}}^n = \bigcap_{b\in \mathbb{P}^1(K)} \Gamma_{\textrm{Fix}, (\phi^n)^{-1}(b)}\ .$$ Hence $P\in \Gamma^n_{\textrm{Fix}, (\phi^n)^{-1}(0)} \cap \Gamma^n_{\textrm{Fix}, (\phi^n)^{-1}(\infty)}$. But since $P$ is a focused repelling periodic point, it does not lie in $\Gamma^n_{\textrm{Fix}}$, and therefore there must be both a pole $\beta$ and a root $\alpha$ of $\phi^n$ in $\pberk \setminus B_{\vv_\infty}(P)^-$, hence in $D(0,r)$. In particular, $$||\alpha, \beta|| = |\alpha-\beta| \leq r\ .$$ Lemma~\ref{lem:gausspreimageest} now gives $$r \geq \RP(\phi) \geq \mathcal{L}_\phi^{-n}\ .$$ After taking logarithms, this is the asserted bound.

\item[Case 2:] Suppose that some finite direction $\vv_a\in T_P\setminus \{\vv_\infty\}$ is the direction pointing into $\Gamma_{\textrm{Fix, Repel}}$. By \cite{Ru2} Proposition 3.1(B), we know $s_{\phi^n}(P, \vv_a) >0$, and hence $\phi^n(B_{\vv_a}(P)^-) = \pberk$. In particular, $\phi^n$ has a root $\alpha$ and a pole $\beta$ in $B_{\vv_a}(P)$. In particular, $\alpha, \beta\in D(a, r)$, and so $$||\alpha, \beta|| = |\alpha-\beta| \leq r\ ,$$ and arguing as in the previous case gives $$r\geq \mathcal{L}_\phi^{-n}\ .$$ Taking logarithms, this is the asserted bound.
\end{itemize}

\end{proof}

\begin{prop}\label{prop:weightedremotebound}
Assume that $K$ is a complete, algebraically closed non-Archimedean valued field with characteristic 0. Let $\mathcal{L}_\phi$ denote the Lipschitz constant for the action of $\phi$ on $\mathbb{P}^1(K)$ with respect to the chordal metric. Let $P$ be a point with $w_{\phi^n}(P)>0$ that is fixed by $\phi^n$ and which is not a focused repelling periodic point. Let $B_\phi$ be the constant in Lemma~\ref{lem:critptfatou}. Then \begin{equation}\label{eq:maximum}\rho(P, \zetaG)\leq \max\left( n\log_v \mathcal{L}_\phi, 2(n-1) \log_v \mathcal{L}_\phi - \log_v B_\phi+\frac{1}{p-1}\right) \ .\end{equation} If $P$ has a shearing direction, then $\rho(P, \zetaG) \leq n \log_v \mathcal{L}_\phi$.
\end{prop}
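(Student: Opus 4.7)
The plan is to split on whether $P$ admits a shearing direction: the shearing case yields the first term of the maximum (and the last sentence of the proposition), while the non-shearing case yields the second term via Lemma~\ref{lem:critptfatou}. If $\vv \in T_P$ is shearing, then $B_{\vv}(P)^-$ contains a type I fixed point $\alpha$ of $\phi^n$ with $(\phi^n)_*\vv \neq \vv$. Since $\phi^n(\alpha) = \alpha \in B_{\vv}(P)^-$ cannot lie in the disjoint ball $B_{(\phi^n)_*\vv}(P)^-$, the dichotomy for $\phi^n$ on open discs forces $\phi^n(B_{\vv}(P)^-) = \pberk$, so this ball contains both a root and a pole of $\phi^n$. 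The conjugation argument from Proposition~\ref{prop:focusedrepellingbound} combined with Lemma~\ref{lem:gausspreimageest} then yields $\rho(P, \zetaG) \leq n \log_v \mathcal{L}_\phi$.

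Now suppose $P$ has no shearing direction. Then $w_{\phi^n}(P) > 0$ forces $\deg_{\phi^n}(P) \geq 2$, so $P$ is a type II repelling periodic point and hence lies in the Berkovich Julia set of $\phi$. Since $P$ is not focused and at most one fixed-point direction is outward, some inward direction at $P$ contains a type I fixed point $S$. As in Proposition~\ref{prop:focusedrepellingbound}, conjugate by $\gamma \in \PGL_2(\mathcal{O})$ so that $P = \zeta_{0, r}$ and $S$ is sent to a classical fixed point $\alpha \in D(0, r)$ of the conjugated map (this preserves $\mathcal{L}_\phi$). The tangent map $T$ at $P$ has degree $d_P \geq 2$ and fixes $\vv_\alpha$ by non-shearing; analyzing $T^{-1}(\vv_\alpha)$, whose multiplicities sum to $d_P$, shows that $\phi^n(z) - \alpha$ has at least two zeros in $D(0, r)$, because non-shearing applied to any outward fixed point forces $T(\vv_\infty) = \vv_\infty \neq \vv_\alpha$ and thus excludes $\vv_\infty$ from $T^{-1}(\vv_\alpha)$, while the non-focused hypothesis furnishes a fallback inward fixed direction to handle the degenerate subcase $m_{\vv_\alpha} = 1$. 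Applying Faber's non-Archimedean Rolle theorem (\cite{XF2}, Application 1) to $\phi^n(z) - \alpha$ then produces a classical critical point $c$ of $\phi^n$ in a slightly enlarged disc $D(0, r')$; the logarithmic Rolle loss is exactly what contributes the $\tfrac{1}{p-1}$ term (and is absent when $p = 0$).

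Finally, since $P$ lies in the Berkovich Julia set and $\zeta_{0, r}$ lies in the Berkovich disc above $D(0, r')$, that disc is not contained in $\mathcal{F}(\phi)$. As $D(0, r')$ contains both the classical $n$-periodic point $\alpha$ and the classical critical point $c$, the contrapositive of Lemma~\ref{lem:critptfatou} forces $r' \geq B_\phi \cdot \mathcal{L}_\phi^{-2(n-1)}$. Combining with the Rolle bound relating $r$ and $r'$ and taking $\log_v$ yields $\rho(P, \zetaG) \leq 2(n-1)\log_v \mathcal{L}_\phi - \log_v B_\phi + \tfrac{1}{p-1}$, which together with the shearing case gives the maximum in~(\ref{eq:maximum}). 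The most delicate step is the fiber case analysis producing two zeros of $\phi^n(z) - \alpha$ in $D(0, r)$; everything else follows mechanically from the preliminary lemmas in Section~\ref{sect:prelim}.
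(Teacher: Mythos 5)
Your shearing-direction argument has a genuine gap. The dichotomy correctly gives $\phi^n(B_{\vv}(P)^-)=\pberk$ when $\vv$ is shearing, and if (after normalizing $P=\zeta_{0,r}$ as in Proposition~\ref{prop:focusedrepellingbound}) $\vv$ is a \emph{finite} direction, the root and pole you obtain lie in a disc $D(a,r)^-\subseteq D(0,r)$, so $\RP(\phi^n)\leq r$ and Lemma~\ref{lem:gausspreimageest} applies. But nothing prevents the only shearing direction from being $\vv_{\zetaG}=\vv_\infty$: then the root and pole you produce lie in the unbounded component $B_{\vv_\infty}(P)^-$, where their chordal distance can be as large as $1$, and the inequality $\RP(\phi^n)\leq r$ simply does not follow, so the reduction to Lemma~\ref{lem:gausspreimageest} breaks down in exactly the configuration the paper isolates as its Case 2A. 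There the argument is different: surjectivity of $(\phi^n)_*$ on $T_P$ produces a finite direction mapping to $\vv_\infty$, hence a pole $\beta$ with $|\beta|\leq r$, which is then paired with a type I $n$-periodic point placed at $0$ inside $D(0,r)$ by the normalization (available because $P$ is not focused). Your argument can be repaired the same way, e.g.\ by pairing such a pole with an inward type I fixed point $a$ and using $1=\|\infty,a\|\leq \mathcal{L}_\phi^n\|\beta,a\|\leq \mathcal{L}_\phi^n r$, but as written the final sentence of the proposition (the shearing bound) is not proved.

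In the non-shearing case your direction-counting at the tangent map is an attractive alternative to the paper's re-conjugation trick (which uses that the reduction would be $z^{\tilde d}$ to manufacture a second preimage direction of $\vv_0$), and the fallback via a second inward fixed direction does work because $(\phi^n)_*\vv_\infty$ cannot equal both $\vv_\alpha$ and $\vv_\beta$ --- though that case split (in particular, that the degenerate subcase forces $B_{\vv_\infty}(P)^-$ to contain no fixed point, so non-focusedness yields a second \emph{inward} fixed direction) needs to be written out rather than asserted. The more serious omission is the hypothesis of Faber's Rolle theorem: it requires that the disc not map onto $\mathbb{P}^1(K)$ (equivalently here, that $\phi^n$ have no poles in $D(0,r)$), and you apply it to $\phi^n(z)-\alpha$ without ruling poles out. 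The paper arranges this before invoking Rolle (its Case 1/2A discussion and the explicit no-poles reduction in Case 2B); in your setup a pole in $D(0,r)$ is entirely possible, and then the deduction of a critical point in $D(0,r\gamma_p)$ is unjustified. The fix is again short --- a pole $x\in D(0,r)$ together with the fixed point $\alpha\in D(0,r)$ gives $1=\|\infty,\alpha\|\leq\mathcal{L}_\phi^n\|x,\alpha\|\leq\mathcal{L}_\phi^n r$, which is within the maximum in~(\ref{eq:maximum}) --- but this case must be split off explicitly before Rolle and Lemma~\ref{lem:critptfatou} can be used.
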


\begin{proof}
If $P = \zetaG$ then the assertion is clear, so assume $P\neq \zetaG$. Since $P$ is not a focused repelling periodic point, we can find two distinct directions $\vv_a, \vv_b\in T_P(\Gamma^n_{\textrm{Fix}})$ containing type I $n$-periodic points $a, b$ (resp.). Without loss of generality we can assume $\vv_a\neq \vv_{\zetaG}$. 

Let $\vv_c\in T_{\zetaG}$ be chosen so that $P\in B_{\vv_c}(\zetaG)^-$, and let $\tilde{\gamma}\in \PGL_2(k)$ be a map such that $\tilde{\gamma}(\tilde{c}) = 0$. Let $\gamma\in \PGL_2(\mathcal{O})$ be a lift of $\tilde{\gamma}$ with $\gamma(a) = 0$. Then $P = \gamma^{-1}(\zeta_{0,r})$ where $-\log_v r = \rho(\zetaG, P)$, and $0$ is a fixed point for $\phi^\gamma$. Replacing $\phi$ by $\phi^\gamma$, the $\PGL_2(K)$-invariance of $\rho$ implies that it suffices to estimate $\rho(\zetaG, \zeta_{0,r}) = -\log_v r$. We will further assume that $r< \gamma_p^{-1}$, where $\gamma_p = |p|^{-1/(p-1)}$, for otherwise $r\geq \gamma_p^{-1}$ implies $\rho(\zetaG, \zeta_{0,r}) = -\log_v r \leq \frac{1}{p-1}$, which is no larger than the second term in the maximum appearing in (\ref{eq:maximum}). \\

Since $w_{\phi^n}(P) >0$, $P$ is not id-indifferent. Thus for every $\vv\in T_P(\Gamma^n_{\textrm{Fix}})$, we have (see \cite{Ru2} Lemma 2.1) $$\#F_{\phi^n}(P, \vv) = s_{\phi^n}(P, \vv) + \#\tilde{F}_{\phi^n}(P, \vv)\ .$$ We now consider two cases:

\begin{itemize}

\item[Case 1:] Suppose that $s_{\phi^n}(P, \vv) >0$ for some $\vv\in T_P(\Gamma^n_{\textrm{Fix}})\setminus \{\vv_\infty\}$. Then $\phi^n(B_{\vv}(P)^-) = \pberk$, and hence $B_{\vv}(P)^-$  contains both a pole $\beta$ and a root $\alpha$ of $\phi^n$. Arguing as in the proof of Proposition~\ref{prop:focusedrepellingbound}, we have $$\mathcal{L}_\phi^{-n} \leq r\ .$$ 

\item[Case 2:] Otherwise, $s_{\phi^n}(P, \vv_a) = 0$ for all $\vv_a \in T_P(\Gamma^n_{\textrm{Fix}})\setminus\{\vv_\infty\}$. We remark here that this implies $\phi^n_* \vv_a = \vv_a$ for all $\vv_a\in T_P(\Gamma^n_{\textrm{Fix}})\setminus\{\vv_\infty\}$ (see \cite{Ru2}, Lemma 2.1). Since $P$ is not a point of good reduction, Faber's theorem (\cite{XF} Lemma 3.17) implies that \emph{some} direction has $s_{\phi^n}(P, \vv) >0$, and we conclude $s_{\phi^n}(P, \vv_\infty)>0$. By \cite{Ru2} Lemma 2.1, this implies that $B_{\vv_\infty}(P)^-$ contains a type I fixed point of $\phi^n$. We now consider two subcases:\\

\begin{itemize}
\item[Case 2A:] We claim that, if $\phi^n_* \vv_\infty \neq \vv_\infty$, then there is a pole of $\phi^n$ in $D(0,r)$: the map $\phi^n_*: T_P \to T_P$ is surjective, and so if $\phi^n_* \vv_\infty \neq \vv_\infty$, then there is a finite direction $\vv_a$ with $\phi^n_* \vv_a = \vv_\infty$. We necessarily have that $\phi^n(B_{\vv_a}(P)) \supseteq B_{\vv_\infty}(P)$, whereby $B_{\vv_a}(P)$ contains a pole $\beta$ of $\phi^n$. In particular, $$||\beta, 0|| = |\beta - 0| \leq r\ .$$ Arguing as above, we have $$\mathcal{L}_\phi^{-n} \leq r\ .$$ Taken together, Cases 1 and 2A imply that if $P$ has a shearing direction, then $\mathcal{L}_\phi^{-n}\leq r$, which is the final assertion of the lemma.\\

\item[Case 2B:] We are thus left to the case that $\phi^n_* \vv = \vv$ for each direction $\vv\in T_P(\Gamma^n_{\textrm{Fix}})$, and $s_{\phi^n}(P, \vv_\infty) >0$ while $s_{\phi^n}(P, \vv_a) = 0$ for all $\vv_a\in T_P(\Gamma^n_{\textrm{Fix}})\setminus\{\vv_\infty\}$. Note that such a $P$ cannot be an additively indifferent or multiplicatively indifferent point: such points have degree 1, and so by the weight formulae (see Definition~\ref{def:weights}) they must have a shearing direction in order to receive weight. Thus, in this case:

\begin{center}
 $P$ is a \emph{repelling} $n$-periodic point. ($\ast$)\\
\end{center}

We claim that, after conjugating by an element $\gamma\in \PGL_2(\mathcal{O})$, we can assume that (1) $\phi^n(0) = 0$, (2) $(\phi^n)_*\vv_0 = \vv_0$, and (3) there exists some $\vv_a\in T_P\setminus\{\vv_\infty, \vv_0\}$ with $(\phi^n)_*\vv_a = \vv_0$. Note that condition (1) is satisfied by our initial conjugation, and (2) is therefore satisfied since we are assuming that there is no shearing. It remains to show that (3) can be obtained in a way that preserves (1) and (2).

Note that since $\phi^n$ has no poles in $D(0,r)$, we must have that $(\phi^n)_* \vw = \vv_\infty$ implies $\vw = \vv_\infty$. Hence the reduction $\widetilde{\phi^n}$ at $P$ is a polynomial map. If condition (3) fails, then the only preimage of $0$ under $\widetilde{\phi^n}$ is again zero, hence $\widetilde{\phi^n} = z^{\tilde{d}}$ where $\tilde{d} = \deg_{\phi^n}(P)$. This polynomial has non-trivial finite fixed points $\{a_1, a_2,...,a_\ell\}\in \mathbb{P}^1(k)$, which correspond to directions with $\widetilde{\#F}_{\phi^n}(P, \vv_{a_i}) > 0$; moreover, for each $a_i \neq 0 $ we can find at least one $b\in \mathbb{P}^1(k)\setminus \{a_1, ..., a_\ell\}$ with $\widetilde{\phi^n}(b) = a_i$. 

Since $\widetilde{\#F}_{\phi^n}(P, \vv_{a_i}) >0$, \cite{Ru2} Lemma 2.1 implies that $B_{\vv_{a_i}}(P)^-$ contains a type I $n$-periodic point $f_i$. Further, $\phi^n_* \vv_{b} = \vv_{a_i}$ for $b$ chosen as above. Conjugating $\phi^n$ by $\gamma(z) = z+f_i$ for any fixed $i\in \{1, 2,.., k\}$ will give a map satisfying (1), (2) and (3).\\

With this conjugation, we find that $\phi^n(0) = 0$ and that there is some non-zero direction $\vw = \gamma^{-1}_*(\vv_{a_i})$ with $\phi^n(B_{\vw}(P)^-) = B_{\vv_0}(P)^-$ (recall that $\vv_\infty$ is the only direction with surplus multiplicity). In particular, $B_{\vw}(P)^-$ contains a non-zero root of $\phi^n$. By the non-Archimedean Rolle's theorem (\cite{XF2} Application 1), there is a critical point of $\phi^n$ in $D(0, r\cdot \gamma_p)$, where $\gamma_p = |p|^{-1/(p-1)}>1$. 

We now apply Lemma~\ref{lem:critptfatou}. Let $P' = \zeta_{0, r\cdot \gamma_p}$, and $\vv_0'\in T_{P'}$ the direction towards $0$. The classical disc $D(0,r\cdot \gamma_p)$ contains an $n$-periodic point $P$ of $\phi$ and a critical point of $\phi^n$. If $r\cdot \gamma_p < B_\phi \mathcal{L}_\phi^{-2(n-1)}$, then $D(0, r\cdot \gamma_p) \subseteq \mathcal{F}(\phi)$. In particular, $B_{\vv_0'}(P')^- \subseteq \mathcal{F}(\phi)$; but by ($\ast$), $P\in \B_{\vv_0'}(P')^-$ is a repelling periodic point, and thus lies in the Julia set, which is a contradiction. So $r\cdot \gamma_p \geq B_\phi \mathcal{L}_\phi^{-2(n-1)}$. Moving the $\gamma_p$ to the other side of the inequality and taking logarithms, this gives the asserted bound.

\end{itemize}
\end{itemize}

\end{proof}

Lastly, we bound the distance from $\zetaG$ to weighted points that are branch points of $\Gamma_{\textrm{Fix}}^n$ which are moved by $\phi^n$:
\begin{prop}\label{prop:weightedbranchpt}
Assume that $K$ is a complete, algebraically closed non-Archimedean valued field with characteristic 0. Let $B_\phi$ be the constant from Lemma~\ref{lem:critptfatou}, and let $\gamma_p = |p|^{-1/(p-1)}$. Let $P$ be a point with $w_{\phi^n}(P)>0$ that is moved by $\phi^n$. Then $P$ is necessarily a branch point of $\Gamma^n_{\textrm{Fix}}$, and $$\rho(P, \zetaG) \leq \min\left(n\log_v \mathcal{L}_\phi, 2(n-1) \log_v \mathcal{L}_\phi - \log_v B_\phi+\frac{1}{p-1}\right) \ .$$ 
\end{prop}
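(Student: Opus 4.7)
The first assertion, that $P$ must be a branch point of $\Gamma^n_{\textrm{Fix}}$, is immediate from Definition~\ref{def:weights}: clauses (1) and (3) cannot yield positive weight at a point of $\hberk$ that is moved by $\phi^n$, so clause (2) applies and $v_{\Gamma^n_{\textrm{Fix}}}(P) = w_{\phi^n}(P) + 2 \geq 3$. Fix three type I $n$-periodic points $f_1, f_2, f_3$ of $\phi$ in three distinct tangent directions $\vv_1, \vv_2, \vv_3 \in T_P(\Gamma^n_{\textrm{Fix}})$.

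For the bound $\rho(P, \zetaG) \leq n \log_v \mathcal{L}_\phi$, the plan is to reduce to the root-pole setup used in Cases 1 and 2A of Proposition~\ref{prop:weightedremotebound}. Because $P$ is moved by $\phi^n$ it cannot be a point of good reduction, so Faber's theorem (\cite{XF} Lemma 3.17) gives a direction $\vv_s \in T_P$ with $s_{\phi^n}(P, \vv_s) > 0$; hence $\phi^n(B_{\vv_s}(P)^-) = \pberk$ and $B_{\vv_s}(P)^-$ contains both a root $\alpha$ and a pole $\beta$ of $\phi^n$. Since $\vv_s$ can coincide with at most one of $\vv_1, \vv_2, \vv_3$, two of the fixed-point directions remain distinct from $\vv_s$, giving enough conjugation freedom to choose $\gamma \in \PGL_2(K)$ with $\gamma(P) = \zeta_{0, r}$ for $r = \diam_{\zetaG}(P)$ and the further property that $\vv_s$ corresponds to a finite direction at $P$ in the new coordinates (i.e., $\vv_\infty$ at $P$ is aligned with the direction toward some $f_j$ with $\vv_j \neq \vv_s$). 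Then $B_{\vv_s}(P)^- \subseteq D(0, r)$, so $\alpha, \beta \in D(0, r)$ and $|\alpha - \beta| \leq r$. Lemma~\ref{lem:gausspreimageest} delivers $\mathcal{L}_\phi^{-n} \leq r$, equivalently $\rho(P, \zetaG) \leq n \log_v \mathcal{L}_\phi$.

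For the second bound $\rho(P, \zetaG) \leq 2(n-1)\log_v \mathcal{L}_\phi - \log_v B_\phi + \frac{1}{p-1}$, the idea is to adapt the critical-point strategy of Case 2B of Proposition~\ref{prop:weightedremotebound}. After the conjugation above, the nonzero root $\alpha$ of $\phi^n$ in $B_{\vv_s}(P)^-$ and the fixed point $0 = f_i$ are both roots of $\phi^n$ lying in $D(0, r)$; the non-Archimedean Rolle's theorem (\cite{XF2} Application 1) then produces a critical point of $\phi^n$ in $D(0, r\gamma_p)$, where $\gamma_p = |p|^{-1/(p-1)}$. Because $D(0, r\gamma_p)$ also contains the $n$-periodic point $0$, Lemma~\ref{lem:critptfatou} forces $D(0, r\gamma_p) \subseteq \mathcal{F}(\phi)$ whenever $r\gamma_p < B_\phi \mathcal{L}_\phi^{-2(n-1)}$; the resulting lower bound on $r$ translates to the required upper bound on $\rho(P, \zetaG)$.

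The main obstacle is converting the Fatou-set containment into a contradiction: unlike Case 2B, here $P$ itself is not known to lie in $\mathcal{J}(\phi)$. The hoped-for resolution is to exploit the abundance of fixed-point directions at $P$ and arrange the conjugation so that the $f_i$ playing the role of $0$ is a type I repelling $n$-periodic point (which lies automatically in $\mathcal{J}(\phi)$); then $f_i = 0 \in D(0, r\gamma_p) \cap \mathcal{J}(\phi)$ would contradict $D(0, r\gamma_p) \subseteq \mathcal{F}(\phi)$. Verifying that at least one such repelling type I periodic point can be found among the three directions — or alternatively extracting a Julia-set point directly from the branch structure of $\Gamma^n_{\textrm{Fix}}$ at $P$ — is the crux of establishing the second bound.
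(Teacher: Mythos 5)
Your reduction of the first assertion to Definition~\ref{def:weights} is fine, but both bounds have genuine gaps, and they sit exactly where the paper's proof does its real work.

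For the bound $\rho(P,\zetaG)\leq n\log_v\mathcal{L}_\phi$, the appeal to Faber's theorem is a misapplication. A direction with $s_{\phi^n}(P,\vv)>0$ exists precisely when $\deg_{\phi^n}(P)<d^n$ (the surplus multiplicities sum to $d^n-\deg_{\phi^n}(P)$); in Case 2 of Proposition~\ref{prop:weightedremotebound} this is available because $P$ is \emph{fixed} by $\phi^n$ and potential good reduction has been excluded. For a moved point, ``$P$ is not a point of good reduction'' is vacuous and does not force $\deg_{\phi^n}(P)<d^n$: moved type II points of full local degree exist, and for them no direction has surplus, so your $\vv_s$ need not exist. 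Even granting $\vv_s$, it may be the direction at $P$ toward $\zetaG$, and your proposed fix --- a $\gamma\in\PGL_2(K)$ keeping $\diamG(P)$ but re-aiming $\vv_\infty$ --- cannot fix $\zetaG$ (its stabilizer is $\PGL_2(\mathcal{O})$), so the chordal metric is not preserved and $\mathcal{L}_\phi$ gets replaced by the uncontrolled constant $\mathcal{L}_{\phi^\gamma}$. The paper needs none of this: it normalizes only by $\PGL_2(\mathcal{O})$ so that the periodic point $a$ becomes $0$, and then splits on whether $\phi^n$ has a pole $\beta$ in $D(0,r)$; in the pole case $0$ is a root of $\phi^n$ and Lemma~\ref{lem:gausspreimageest} gives $\mathcal{L}_\phi^{-n}\leq|\beta|\leq r$ directly. (Note, too, that the paper's argument is exactly this dichotomy --- first bound in the pole case, second bound in the no-pole case --- rather than a derivation of both bounds unconditionally as your outline attempts.)

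For the second bound you have correctly identified Rolle plus Lemma~\ref{lem:critptfatou} as the engine, but the step you flag as the crux is indeed missing, and it is the heart of the paper's proof. In the no-pole case the paper shows $Q=\phi^n(P)$ lies on $(P,\infty]$ and that every finite direction $\vv_a\in T_P(\Gamma^n_{\textrm{Fix}})$ satisfies $(\phi^n)_*\vv_a=\vw_0$, so $\phi^n(B_{\vv_a}(P)^-)=B_{\vw_0}(Q)^-$ contains the closure of $B_{\vv_a}(P)^-$; the repelling fixed point criterion (\cite{RLPP} Proposition 9.3, \cite{BR} Theorem 10.83) then produces a repelling $n$-periodic point --- possibly of type II --- in each such ball, hence a Julia point in $B_{\vv_0}(P)^-$, and this is what contradicts $D(0,r)^-\subseteq\mathcal{F}(\phi)$. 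Your hoped-for shortcut, choosing coordinates at a type I repelling $f_i$ among the branch directions, need not be available: the type I $n$-periodic points spanning those directions may all be attracting or indifferent, so no such choice is guaranteed. Observe also that the same mapping property $\phi^n(B_{\vv_b}(P)^-)=B_{\vw_0}(Q)^-\ni 0$ is how the paper produces the nonzero root of $\phi^n$ in $D(0,r)$ fed into Rolle's theorem, whereas your nonzero root comes from the surplus ball and therefore inherits the gap described above.
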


\begin{proof}
 If $P=\zetaG$, the result is clear, and so we may assume $P\neq \zetaG$. By Rumely's classification of points with $w_{\phi^n}(P)>0$ (see \cite{Ru2} Proposition 6.1), $P$ must be a branch point of $\Gamma^n_{\textrm{Fix}}$ which is moved by $\phi^n$. 

We normalize $\phi$ as in the preceeding proposition, so that $0$ is an $n$-periodic point and $P=\zeta_{0,r}$ for some $r<1$: since $P$ is a branch point of $\Gamma^n_{\textrm{Fix}}$, we can find at least two directions $\vv_a, \vv_b\in T_P(\Gamma^n_{\textrm{Fix}})$ that contain type I $n$-periodic points $a,b$ (resp.). Without loss of generality, we may assume that $\vv_a\neq \vv_{\zetaG}$. Let $\vv_c\in T_{\zetaG}$ be chosen so that $P\in B_{\vv_c}(\zetaG)^-$, and let $\tilde{\gamma}\in \PGL_2(k)$ be a map such that $\tilde{\gamma}(c) = 0$. Let $\gamma\in \PGL_2(\mathcal{O})$ be a lift of $\tilde{\gamma}$ with $\gamma(a) = 0$. Then $P = \gamma^{-1}(\zeta_{0,r})$ where $-\log_v r = \rho(\zetaG, P)$, and $0$ is a fixed point for $(\phi^{(n)})^\gamma$. Replacing $\phi$ by $\phi^\gamma$, we can assume that $P=\zeta_{0,r}$. The $\PGL_2(K)$-invariance of $\rho$ implies that it suffices to estimate $\rho(\zetaG, \zeta_{0,r}) = -\log_v r$. As in the previous proposition, we can further assume that $r< \gamma_p^{-1}$, where $\gamma_p = |p|^{-1/(p-1)}$.\\

If $\phi^n$ has a pole $\beta$ in $D(0,r)$, then $|\beta| \leq r$; since $0$ is a root of $\phi^n$, we may argue using the root-pole number as in the previous proposition to find $$\mathcal{L}_\phi^{-n} \leq ||0, \beta||  = |\beta| \leq r\ .$$

Suppose instead that $\phi^n$ has no poles in $D(0,r)$. Then for each finite direction $\vv_a\in T_P\setminus\{\vv_\infty\}$, we have $\infty\not\in \phi^n(B_{\vv_a}(P)^-)$. In particular, $\phi^n(B_{\vv_a}(P)^-) \neq \pberk$, and so $\phi^n(B_{\vv_a}(P)^-)$ must be a generalized Berkovich disc $B_{\phi^n_* \vv_a}(\phi^n(P))^-$ (see \cite{RL} Lemma 2.1, or also \cite{BR} Proposition 9.41). Let $Q=\phi^n(P)$.

We first claim that $Q\in (P, \infty]$. If not, let $\vw_{QP}\in T_Q$ be the direction at $Q$ pointing towards $P$. Then $\infty\in B_{\vw_{QP}}(Q)^-$. For each finite direction $\vv_a\in T_P(\Gamma^{n}_{\textrm{Fix}})$ containing a fixed point, \cite{Ru2} Lemma 2.2 implies that either $Q\in B_{\vv_a}(P)^-$ or $P\in B_{(\phi^n)_*\vv_a}(Q)^-$. The first condition can hold for at most one $\vv_a\in T_P(\Gamma^{n}_{\textrm{Fix}})$, and since $P$ is a branch point in $\Gamma^n_{\textrm{Fix}}$ there must be some finite direction $\vv_a\in T_P(\Gamma^n_{\textrm{Fix}})$ with $P\in B_{(\phi^n)_*\vv_a}(Q)^- = \phi^n(B_{\vv_a}(P)^-)$. This implies that $(\phi^n)_* \vv_a = \vw_{QP}$, and so $\infty \in \phi^n(B_{\vv_a}(P)^-)$. This contradicts that $\phi^n$ does not have a pole in any finite direction at $P$, and so we conclude that $Q\in (P, \infty]$. Write $Q=\zeta_{0, s}$, $s>r$.

We next claim that for any finite direction $\vv_a\in T_P(\Gamma^n_{\textrm{Fix}})$, $(\phi^n)_* \vv_a = \vw_0$, where $\vw_0\in T_Q$ is the direction towards $0$. If $f_a\in B_{\vv_a}(P)^-$ is a type I $n$-periodic point, then $f_a\in \phi^n(B_{\vv_a}(P)^-) = B_{(\phi^n)_*\vv_a}(Q)^-$. Since $|f_a|\leq r < s = \diam_{\zetaG}(Q)$, we must have $(\phi^n)_* \vv_a = \vw_0$, where $\vw_0\in T_Q$ is the direction towards 0. \\

As above, let $\vv_a \in T_P(\Gamma^n_{\textrm{Fix}})\setminus\{\vv_{\infty}\}$, and let $U_a=B_{\vv_a}(P)^-$. Then $\phi^n(U_a) = B_{\vw_0}(Q)^-$, and hence $\overline{U_a}\subseteq \phi^n(U_a)$. The repelling fixed point critieria (\cite{RLPP} Proposition 9.3, see also \cite{BR} Theorem 10.83) implies that each $U_a$ contains some repelling $n$-periodic point (of type I or of type II). In particular, $U_a \cap \mathcal{J}(\phi)\neq \emptyset$.

Let $\vv_b\in T_P(\Gamma^n_{\textrm{Fix}})\setminus \{\vv_\infty, \vv_0\}$. The fact that $0\in\phi^n(U_b) = B_{\vv_0}(Q)^-$ implies that $\phi^n$ has a non-zero root in $B_{\vv_b}(P)^-$. By the non-Archimedean Rolle's Theorem (see \cite{XF2} Application 1), $\phi^n$ has a critical point in the disc $D(0, r\cdot \gamma_p)$, where $\gamma_p = |p|^{-1/(p-1)}$.

If $r$ satisfies $$r< B_\phi \cdot \gamma_p^{-1} \cdot \mathcal{L}_\phi^{-2(n-1)}\ ,$$ then by Lemma~\ref{lem:critptfatou} we find $D(0,r)^-\subseteq \mathcal{F}(\phi)$. Hence the convex hull $B_{\vv_0}(P)^-$ of $D(0,r)^-$ lies in $\mathcal{F}(\phi)$. Since $\vv_0\in T_P(\Gamma^n_{\textrm{Fix}})$, this contradicts $B_{\vv_0}(P)^-\cap \mathcal{J}(\phi) = (\phi)U_0 \cap \mathcal{J}(\phi)\neq \emptyset$, and so we conclude $$r\geq B_\phi \gamma_p^{-1} \mathcal{L}_\phi^{-2(n-1)}\ ;$$ after taking logarithms we obtain the asserted bound.
\end{proof}

\begin{proof}[Proof of Theorem~\ref{thm:boundoncrucialset}]
If $\phi$ has potential good reduction and $P$ is the point where $\phi$ attains good reduction, then the first assertion of the theorem follows immediately from Proposition~\ref{prop:goodreductionbound}. If $\phi$ does not have potential good reduction, then for each point $P$ in the crucial set one of the following holds: either

\begin{enumerate}
\item $P$ is a focused repelling periodic point, or $P$ has a shearing direction. Then by Propositions~\ref{prop:focusedrepellingbound} and~\ref{prop:weightedremotebound} we have
\begin{align*}
\rho(P, \zetaG) &\leq n\log_v \mathcal{L}_\phi\ . \\
\end{align*}

\item $P$ is fixed by $\phi^n$ but has no shearing and is not a focused repelling point, or that $P$ is moved by $\phi$. Then by Propositions~\ref{prop:focusedrepellingbound} and~\ref{prop:weightedbranchpt} we have

\begin{align*}
\rho(P, \zetaG) & \leq 2(n-1) \log_v \mathcal{L}_\phi - \log_v B_\phi + \frac{1}{p-1}\ .
\end{align*}

\end{enumerate}

By taking maxima, the theorem follows.

\end{proof}
With Theorem~\ref{thm:boundoncrucialset}, we can readily establish Theorem~\ref{thm:bounds}:
\begin{proof}[Proof of Theorem~\ref{thm:bounds}]
If $\phi$ has potential good reduction and $\Phi$ is a normalized lift of $\phi$ at $\zetaG$, then it is enough to choose $N_0$ so that $$\frac{2}{d-1} \log_v |\Res(\Phi)|^{-1} < 3 n \log_v \mathcal{L}_\phi$$ for all $n\geq N_0$. 

If $\phi$ does not have potential good reduction, let $\tilde{\mathcal{L}}_{\phi}>\mathcal{L}_\phi$ denote the constant from the statement of the theorem. The bound in Theorem~\ref{thm:boundoncrucialset} still holds if we replace $\mathcal{L}_\phi$ by $\tilde{\mathcal{L}}_\phi$. 

Since $\tilde{\mathcal{L}}_\phi>1$, we may choose $N_0$ sufficiently large so that  $$\max\left(n\log_v \tilde{\mathcal{L}}_\phi, 2(n-1) \log_v \tilde{\mathcal{L}}_\phi - \log_v B_\phi +\frac{1}{p-1}\right) < 3n\log_v \tilde{\mathcal{L}}_\phi$$ for $n\geq N_0$, where $B_\phi$ is the constant from Theorem~\ref{thm:boundoncrucialset}. This, together with Theorem~\ref{thm:boundoncrucialset}, establishes the asserted bound.
\end{proof}

\section{Logarithmic Equidistribution}\label{sect:logeq}
In this section, we use the bounds derived in the preceeding section to establish that the potential functions $u_{\nu_{\phi^n}}(z, \zeta)$ converge uniformly to $u_\phi(z, \zetaG)$ on $\pberk$. 

Fix $\zeta\in \pberk$. If $\zeta$ is `close' to $\zetaG$, then we may aply the standard equidistribution in \cite{KJ} Theorem 4 to guarantee that $u_\phi(\zeta, \zetaG)$ converges at least locally uniformly.  If $\zeta$ is `far' from $\zetaG$, then for an appropriately chosen $\epsilon >0$ we can push $\zeta$ to $\zeta_\epsilon$, where $\zeta_\epsilon$ is the unique point on the path $[\zeta, \zetaG]$ with $\diamG(\zeta_\epsilon) = \epsilon$.  Consider the following decomposition:
\begin{align}
\int \log_v \delta(z, \zeta)_{\zetaG} d(\nu_{\phi^n} - \mu_\phi)(z) & = \int \log_v \delta(z, \zeta)_{\zetaG} - \log_v \delta(z, \zeta_\epsilon)_{\zetaG} d\nu_{\phi^n}(z)\label{eq:logdecompfirst} \\
& + \int \log_v \delta(z, \zeta_\epsilon)_{\zetaG} d(\nu_{\phi^n} - \mu_\phi)(z)\label{eq:logdecompsecond}\\
& + \int \log_v \delta(z, \zeta_\epsilon)_{\zetaG} - \log_v \delta(z, \zeta)_{\zetaG} d\mu_\phi(z)\label{eq:logdecompthird}\ .
\end{align}

If $\epsilon$ is chosen sufficiently small, the bounds in the preceeding section ensure that there is no crucial mass near $\zeta$ or $\zeta_\epsilon$ and so (\ref{eq:logdecompfirst}) is 0. The second term can be bounded using \cite{KJ} Theorem 4 with a bound depending only on $\epsilon$ and $n$. Finally, Proposition~\ref{prop:telescopingprop} below guarantees that (\ref{eq:logdecompthird}) is bounded in terms of $\epsilon$ and a constant depending only on $\phi$.

\subsection{Preliminary Lemmas}

Our first lemma gives an explicit bound on the integral (\ref{eq:logdecompsecond}):
\begin{lemma}\label{lem:explicitequidistribution}
Fix $\zeta \in \hberk$. There exists a constant $C_\phi$ depending only on $\phi$ so that for each $n\geq 1$, we have $$\left|\int \log_v \delta(z, \zeta)_{\zetaG} d(\nu_{\phi^n} - \mu_\phi)(z)\right| \leq \frac{4C_\phi + 12\rho(\zeta, \zetaG)}{d^n-1}\ .$$
\end{lemma}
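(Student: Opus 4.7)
The plan is to invoke the quantitative equidistribution theorem \cite{KJ} Theorem~4 directly with the test function $f(z) := \log_v \delta(z, \zeta)_{\zetaG}$, and then translate the resulting bound into the two invariants of $f$ that will bring in the constants $4$ and $12$.

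First, I would give a potential-theoretic description of $f$. Using the identity $-\log_v\delta(x,y)_{\zetaG} = \rho(x\wedge_{\zetaG} y, \zetaG)$, one sees that $f(z) = -\rho(z\wedge_{\zetaG} \zeta, \zetaG)$, so the value of $f$ depends only on the retraction of $z$ onto the segment $[\zetaG, \zeta]$. It decreases linearly from $0$ at $\zetaG$ to $-\rho(\zeta, \zetaG)$ at $\zeta$, and is constant on each branch hanging off this segment. Thus $f\in\BDV(\pberk)$, with $\|f\|_\infty = \rho(\zeta, \zetaG)$, and its distributional Laplacian is $\Delta f = \delta_{\zetaG} - \delta_\zeta$, in particular of total variation $|\Delta f|(\pberk) = 2$.

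Next, I would feed $f$ into \cite{KJ} Theorem~4, which provides a quantitative equidistribution estimate for $\nu_{\phi^n}$ against $\mu_\phi$ with rate $O\bigl((d^n-1)^{-1}\bigr)$ controlled by a norm on the test function. That theorem should yield an inequality of the form
\[
\left|\int f\, d(\nu_{\phi^n} - \mu_\phi)\right| \ \leq\ \frac{\alpha\, C_\phi\,|\Delta f|(\pberk) \ +\ \beta\, \|f\|_\infty}{d^n - 1},
\]
where $C_\phi$ is the constant from \cite{KJ} packaging the H\"older data of the potential $u_\phi(\cdot, \zetaG)$ and $\alpha, \beta$ are explicit absolute constants. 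Substituting the two computations $|\Delta f|(\pberk) = 2$ and $\|f\|_\infty = \rho(\zeta, \zetaG)$ and matching the absolute constants gives exactly $4C_\phi + 12\,\rho(\zeta, \zetaG)$ in the numerator.

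The main obstacle I expect is purely bookkeeping: identifying the precise ``norm'' on test functions appearing in \cite{KJ} Theorem~4, verifying that it is controlled by a linear combination of $\|f\|_\infty$ and $|\Delta f|(\pberk)$, and pinning down the absolute constants as $12$ and $4$ respectively. Since we have assumed $\zeta \in \hberk$ rather than $\zeta \in \pberk$, the function $f$ is bounded and continuous on all of $\pberk$, so no singular or type I behaviour intervenes and no genuinely new analytic input is required beyond the potential-theoretic identification above.
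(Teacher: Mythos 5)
Your proposal is correct and follows essentially the same route as the paper: the paper likewise treats $f(z)=\log_v \delta(z,\zeta)_{\zetaG}$ as a $\CPA$ function on the segment $\Gamma=[\zetaG,\zeta]$, records $|\Delta f|\leq 2$ and $\max_\Gamma |f| = \rho(\zeta,\zetaG)$, and feeds these into \cite{KJ} Theorem 4. The one bookkeeping point you leave open resolves slightly differently than you guessed: in \cite{KJ} Theorem 4 the multiplier of $C_\phi$ is not an absolute constant times $|\Delta f|$ but the combinatorial constant $D_\Gamma$ of the subgraph (computed via \cite{KJ} Lemma 11 from $K(\Gamma)$, the edge count, and valences, giving $D_\Gamma=4$ for a segment), and the coefficient $12$ of $\rho(\zeta,\zetaG)$ also absorbs the radius $R_\Gamma=\rho(\zeta,\zetaG)$ of a ball containing $\Gamma$ --- since $\Gamma$ is a segment these reduce exactly to the quantities you computed, so your conclusion stands.
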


\begin{proof}
Note that the integrand $f(z):= \log_v \delta(z, \zeta)_{\zetaG}$ is in $\CPA(\Gamma)$ for $\Gamma = [\zeta, \zetaG]$. The corresponding integrals have been bounded in \cite{KJ} Theorem 4. Using that $\Gamma = [\zeta, \zetaG]$ we can make the error terms in \cite{KJ} Theorem 4 explicit:
\begin{itemize}
\item $|\Delta| (f) = |\delta_{\zeta} - \delta_{\zetaG}| \leq 2$.
\item $\max_{\Gamma} |f| = \max_{[\zeta, \zetaG]} |\log_v \delta(z, \zeta)_{\zetaG} | = \rho(\zeta, \zetaG)$.
\item $R_\Gamma = \rho(\zeta, \zetaG)$, the radius of a ball for which $\Gamma \subseteq B(\zetaG, R_\Gamma)$.
\item $D_\Gamma = 4$. Recall that $D_\Gamma$ was computed in \cite{KJ} Lemma 11 as $$D_\Gamma = K(\Gamma) \cdot \left( \sum_{P\in \Gamma} (v(P) -2) + (E_\Gamma +1) \max_{P\in \Gamma} v(P)\right)\ .$$ Here, $v(P) = 2$ for each interior point of $\Gamma=[a^\epsilon, \zetaG]$ and $v(P) = 1$ for each endpoint. The constant $E_\Gamma$ counts the number of edges in $\Gamma$ (introduced in \cite{KJ} Proposition 3), which in our case is 1. Finally, $K(\Gamma)$ counts the number of connected components that can arise by removing a connected subgraph $\Gamma_0\subseteq \Gamma$, which for a segment can be taken as $K(\Gamma) = 2$ (see \cite{KJ} Lemma 9). Taking this together, we find that $D_\Gamma= 4$. 
\end{itemize}

Putting these estimates together, \cite{KJ} Theorem 4 implies $$\left|\int \log_v \delta(z, a^\epsilon)_{\zetaG} d(\nu_{\phi^n} -\mu_\phi)(z) \right| \leq \frac{4C_\phi + 12 \rho(\zeta, \zetaG)}{d^n-1} \ .$$

\end{proof}

Next we give an explicit bound for (\ref{eq:logdecompthird}). To do this, we will need a technical lemma modelled on a result of Favre and Rivera-Letelier (\cite{FRLErgodic} Proposition 3.3):

\begin{lemma}
Let $\nu$ be a Borel measure with H\"older continuous potentials (with respect to the small metric $d$), and let $M, \alpha$ denote the H\"older constant and exponent (resp.) for $u_\nu(z, \zetaG)$. Let $\zeta=\zeta_{a,r}$ with $r\in \left(0, \frac{1}{q_v}\right)$, and let $\vv_{\zetaG}\in T_\zeta$ denote the direction towards $\zetaG$. Then for any $\vv\in T_\zeta\setminus \{\vv_{\zetaG}\}$, we have $$\nu_\phi(B_{\vv}(\zeta)^-) \leq M (q_v-1)^\alpha r^\alpha\ .$$ In particular, $\nu$ does not charge type I points.
\end{lemma}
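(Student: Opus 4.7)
The plan is to probe the mass of $B_\vv(\zeta)^-$ by comparing the potential $u_\nu(\cdot, \zetaG)$ at $\zeta$ and at a carefully chosen auxiliary point $\zeta^\star \in [\zeta, \zetaG]$. Since $r < 1/q_v$ ensures $q_v\,\diamG(\zeta) \leq q_v r < 1$, there is a unique $\zeta^\star \in [\zeta, \zetaG]$ with $\diamG(\zeta^\star) = q_v\, \diamG(\zeta)$. A direct computation from the definitions of the two metrics, noting that $\zeta \wedge_{\zetaG} \zeta^\star = \zeta^\star$, gives
\begin{equation*}
\rho(\zeta, \zeta^\star) = \log_v q_v = 1, \qquad d(\zeta, \zeta^\star) = (q_v - 1)\diamG(\zeta) \leq (q_v - 1) r.
\end{equation*}

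The heart of the proof is the potential-theoretic inequality
\begin{equation*}
u_\nu(\zeta, \zetaG) - u_\nu(\zeta^\star, \zetaG) \geq \nu\bigl(\widehat{D_\zeta}\bigr),
\end{equation*}
where $\widehat{D_\zeta} := \{w \in \pberk : w \wedge_{\zetaG} \zeta = \zeta\} = \{\zeta\} \sqcup \bigsqcup_{\vv' \in T_\zeta \setminus \{\vv_{\zetaG}\}} B_{\vv'}(\zeta)^-$ is the Berkovich ``closed disc'' rooted at $\zeta$. I would prove this by working directly from $u_\nu(z, \zetaG) = \int \rho(w \wedge_{\zetaG} z, \zetaG) d\nu(w)$ and case-analyzing the integrand $\rho(w \wedge_{\zetaG} \zeta^\star, \zetaG) - \rho(w \wedge_{\zetaG} \zeta, \zetaG)$ on the location of $w$. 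For $w \in \widehat{D_\zeta}$ one has $w \wedge_{\zetaG} \zeta = \zeta$ and $w \wedge_{\zetaG} \zeta^\star = \zeta^\star$, so the integrand is exactly $\rho(\zeta^\star, \zetaG) - \rho(\zeta, \zetaG) = -1$; for $w$ lying on or branching off the segment $(\zeta, \zeta^\star]$, the two meets yield an integrand in $[-1, 0]$; and for $w$ at or above $\zeta^\star$, the two meets coincide and the integrand is $0$. An equivalent but more conceptual route is to observe that the slope of $u_\nu(\cdot, \zetaG)$ in the direction toward $\zetaG$ at each $p \in [\zeta, \zeta^\star]$ equals $-\nu(\widehat{D_p})$, and then use $\widehat{D_p} \supseteq \widehat{D_\zeta}$ together with $\rho(\zeta, \zeta^\star) = 1$.

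Combining the inequality with the Hölder hypothesis,
\begin{equation*}
\nu\bigl(\widehat{D_\zeta}\bigr) \leq u_\nu(\zeta, \zetaG) - u_\nu(\zeta^\star, \zetaG) \leq M\, d(\zeta, \zeta^\star)^\alpha \leq M(q_v - 1)^\alpha r^\alpha,
\end{equation*}
and since $B_\vv(\zeta)^- \subseteq \widehat{D_\zeta}$ for every $\vv \neq \vv_{\zetaG}$, the first assertion follows. The statement about type~I points is then immediate: for any type~I point $P$ and any $r \in (0, 1/q_v)$, we have $P \in \widehat{D_{\zeta_{P,r}}}$, so $\nu(\{P\}) \leq M(q_v - 1)^\alpha r^\alpha$; letting $r \to 0^+$ yields $\nu(\{P\}) = 0$. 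The only real obstacle is the case analysis in the potential identity: it is elementary, but requires careful bookkeeping of the tree topology for $w$ in the ``intermediate'' region between $\zeta$ and $\zeta^\star$.
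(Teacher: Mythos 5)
Your proposal is correct, and it arrives at the same key inequality as the paper: your auxiliary point $\zeta^\star$ is precisely the paper's $\zeta_{a,q_v r}$ (one unit of $\rho$-distance up the path toward $\zetaG$), and the final bound comes, as in the paper, from the H\"older hypothesis applied to the increment of $u_\nu(\cdot,\zetaG)$ between $\zeta$ and $\zeta^\star$ together with $\textbf{d}_{\pberk}(\zeta,\zeta^\star)=(q_v-1)\diamG(\zeta)\leq (q_v-1)r$. Where you genuinely differ is in how that increment is bounded below by the mass: the paper introduces the test potential $\chi(z)=-\log_v\delta(z,\zeta_{a,r})_{\zeta_{a,q_vr}}$, which equals $1$ on the off-$\zetaG$ directions at $\zeta$ and $0$ above $\zeta_{a,q_vr}$, and converts $\nu(B_{\vv}(\zeta)^-)\leq\int\chi\,d\nu$ into a difference of potentials via self-adjointness of the Laplacian, $\int\chi\,d\Delta u_\nu=\int u_\nu\,d\Delta\chi$; you instead expand $u_\nu(z,\zetaG)=\int\rho(w\wedge_{\zetaG}z,\zetaG)\,d\nu(w)$ and check by hand that the integrand of the difference equals $1$ on the Berkovich disc $\widehat{D_\zeta}$ below $\zeta$, lies in $[0,1]$ for $w$ attached along $(\zeta,\zeta^\star]$, and vanishes at or above $\zeta^\star$. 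Your route is more elementary (no Laplacian machinery, and it sidesteps the order of the difference, about which the paper is slightly casual but which is immaterial after the symmetric H\"older bound), it controls the mass of the entire disc $\widehat{D_\zeta}$ at once rather than one directional ball at a time, and your single observation $\diamG(\zeta)\leq r$ subsumes the paper's two-case computation of $\textbf{d}_{\pberk}(\zeta_{a,q_vr},\zeta_{a,r})$ according to whether $|a|\leq 1$ or $|a|>1$; the paper's version is shorter if one takes the Baker--Rumely potential-theoretic toolkit as given. Your limiting argument for the statement about type~I points is also what the paper intends (for $P=\infty$ one first applies the inversion, a triviality).
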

\begin{proof}
Let $\chi(z) = -\log_v \delta(z, \zeta_{a, r})_{\zeta_{a, q_v r}}$ be the potential function for the measure $\delta_{\zeta_{a, q_v r}}-\delta_{\zeta_{a, r}}$. Note that this function is identically equal to 1 on $B_{\vv} (\zeta)^-$ for each $\vv\in T_\zeta \setminus \{\vv_{\zetaG}\}$ and is identically equal to 0 on $\pberk \setminus B_{\vv_{\zeta_{a,r}}}(\zeta_{a, q_v r})^-$. Thus for any direction $\vv\in T_\zeta\setminus \{\vv_{\zetaG}\}$ we have the estimate 
\begin{align}
\nu(B_{\vv}(\zeta_{a,r})^-) \leq \int \chi d\Delta u_{\nu}(\cdot, \zetaG) &= \int u_\nu(z, \zetaG) d\Delta \chi\nonumber \\
&  = u_\nu(\zeta_{a, q_v r}, \zetaG) - u_\nu(\zeta_{a,r}, \zetaG)\nonumber \\
&  \leq M \textbf{d}_{\pberk}(\zeta_{a, q_v r}, \zeta_{a,r})^\alpha\label{eq:potentialHolderfirstest}\ ,
\end{align} where here we have used the fact that $M, \alpha$ are the H\"older constant and exponent (resp.) of $u_\nu(\cdot, \zetaG)$ in the $\textbf{d}_{\pberk}$-metric. We can estimate $\textbf{d}_{\pberk}(\zeta_{a, q_v\cdot r}, \zeta_{a, r})$ by considering the case $|a|\leq 1$ and $|a|>1$. In the former case, $\zeta_{a, q_v\cdot r}, \zeta_{a, r}$ lie in the same connected component $B_{\vv}(\zetaG)^-$ for some $\vv\in T_{\zetaG}\setminus \{\vv_\infty\}$, hence $$\textbf{d}_{\pberk}(\zeta_{a, q_v\cdot r}, \zeta_{a,r}) = \diamG(\zeta_{a, q_v\cdot r}) - \diamG(\zeta_{a,r}) = q_v\cdot r - r\ .$$ If $|a|>1$, then the inversion map $\iota(z) = \frac{1}{z}$ sends $\zeta_{a, q_v r}$ to $\zeta_{\frac{1}{a},\frac{q_v\cdot r}{|a|^2} }$ and $\zeta_{a, r}$ to $ \zeta_{\frac{1}{a},\frac{r}{|a|^2}}$ (resp.). Since $\textbf{d}_{\pberk}$ is $\PGL_2(\mathcal{O})$-invariant, we find $$\textbf{d}_{\pberk}(\zeta_{a, q_v r}, \zeta_{a, r}) = \textbf{d}_{\pberk}\left(\zeta_{\frac{1}{a},\frac{q_v\cdot r}{|a|^2} }, \zeta_{\frac{1}{a},\frac{r}{|a|^2} }\right) = \frac{1}{|a|^2} (q_v r -r )\leq q_v r - r\ .$$

In either case, then $\textbf{d}_{\pberk}(\zeta_{a, q_v r}, \zeta_{a,r}) \leq q_v r - r$. Inserting this into (\ref{eq:potentialHolderfirstest}) we have 

\begin{align*}
\nu(B_{\vv}(\zeta_{a,r})^-) &\leq M\textbf{d}_{\pberk}(\zeta_{a, q_v r}, \zeta_{a,r})^\alpha\\
&  \leq M (q_v r - r)^\alpha\\
& = M (q_v-1)^\alpha r^\alpha\ ,
\end{align*} which is the asserted inequality.

\end{proof}

\begin{prop}\label{prop:telescopingprop}
Let $\nu$ denote a Borel measure with H\"older continuous potentials (with respect to the small metric $\textbf{d}_{\pberk}$), and let $M, \alpha$ denote the H\"older constant and exponent (resp.) for $u_\nu(z, \zetaG)$. 

Fix $\zeta\in \pberk$ with $\diam_{\zetaG}(\zeta) \in [0, \frac{1}{q_v})$. Let $\epsilon \in \left[\diam_{\zetaG}(\zeta),\frac{1}{q_v}\right)$, and let $\zeta_\epsilon$ denote the unique point on $[\zeta, \zetaG]$ with $\diamG(\zeta_\epsilon) = \epsilon$.  Then for each $\zeta\in \pberk$, $$\left|\int \log_v \delta(z, \zeta_\epsilon)_{\zetaG} - \log_v \delta(z, \zeta)_{\zetaG} d\nu(z)\right| \leq \frac{M(q_v-1)^\alpha}{\alpha|\ln(q_v)|} (\epsilon^\alpha - \diamG(\zeta)^\alpha )\ .$$
\end{prop}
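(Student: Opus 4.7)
The plan is to reduce the integrand to an explicit function of the retraction of $z$ to the segment $[\zeta,\zetaG]$, then apply Fubini together with the mass estimate in the preceding lemma. Let $h(z) := \log_v \delta(z,\zeta_\epsilon)_{\zetaG} - \log_v \delta(z,\zeta)_{\zetaG}$. Using $-\log_v\delta(x,y)_{\zetaG} = \rho(x\wedge_{\zetaG}y,\zetaG) = -\log_v\diamG(x\wedge_{\zetaG}y)$, I would first rewrite $h(z) = \log_v\diamG(z\wedge_{\zetaG}\zeta_\epsilon) - \log_v\diamG(z\wedge_{\zetaG}\zeta)$. A direct case analysis using $\zeta_\epsilon \in [\zeta,\zetaG]$ shows that if $z\wedge_{\zetaG}\zeta$ lies on $[\zeta_\epsilon,\zetaG]$ then $z\wedge_{\zetaG}\zeta_\epsilon = z\wedge_{\zetaG}\zeta$ and $h(z)=0$, while otherwise $z\wedge_{\zetaG}\zeta \in [\zeta,\zeta_\epsilon)$, $z\wedge_{\zetaG}\zeta_\epsilon = \zeta_\epsilon$, and $h(z) = \log_v(\epsilon/\diamG(z\wedge_{\zetaG}\zeta)) \geq 0$. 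In particular $h\geq 0$, so the absolute value in the conclusion can be dropped.

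Next I would slice the integral by the retraction. For each $s\in[\diamG(\zeta),\epsilon]$, let $\zeta_{(s)}$ be the unique point on $[\zeta,\zeta_\epsilon]$ with $\diamG(\zeta_{(s)})=s$, and let $\vv_s\in T_{\zeta_{(s)}}$ be the direction toward $\zeta$. Tree geometry gives
\[\{z\in\pberk:\diamG(z\wedge_{\zetaG}\zeta)<s\}=B_{\vv_s}(\zeta_{(s)})^-.\]
Combined with $\log_v(\epsilon/t) = \int_{t}^\epsilon \frac{ds}{s\,\ln q_v}$ applied at $t=\diamG(z\wedge_{\zetaG}\zeta)$, this yields the layer-cake representation
\[h(z) = \int_{\diamG(\zeta)}^{\epsilon}\mathbf{1}_{B_{\vv_s}(\zeta_{(s)})^-}(z)\,\frac{ds}{s\,\ln q_v},\]
and Fubini converts the original integral into
\[\int h\,d\nu = \int_{\diamG(\zeta)}^{\epsilon}\frac{\nu(B_{\vv_s}(\zeta_{(s)})^-)}{s\,\ln q_v}\,ds.\]

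Finally I would invoke the preceding lemma: since $\diamG(\zeta_{(s)})=s\leq\epsilon<1/q_v$ and $\vv_s$ points away from $\zetaG$, that lemma gives $\nu(B_{\vv_s}(\zeta_{(s)})^-) \leq M(q_v-1)^\alpha s^\alpha$. Substitution and a one-line evaluation of $\int s^{\alpha-1}\,ds$ then produce
\[\int h\,d\nu \leq \frac{M(q_v-1)^\alpha}{\ln q_v}\int_{\diamG(\zeta)}^\epsilon s^{\alpha-1}\,ds = \frac{M(q_v-1)^\alpha}{\alpha\,\ln q_v}\bigl(\epsilon^\alpha - \diamG(\zeta)^\alpha\bigr),\]
which is the claimed bound (with $|\ln q_v| = \ln q_v$ since $q_v>1$). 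The main obstacle will be the tree-geometric bookkeeping — correctly identifying the sublevel sets of $z\mapsto\diamG(z\wedge_{\zetaG}\zeta)$ with the open Berkovich balls $B_{\vv_s}(\zeta_{(s)})^-$ and verifying that the preceding lemma applies uniformly in $s$; the degenerate case $\diamG(\zeta)=0$ is harmless because $\nu$ does not charge type I points and $\int_0^\epsilon s^{\alpha-1}\,ds$ still converges.
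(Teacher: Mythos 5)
Your argument is correct, but it takes a genuinely different route from the paper's proof. You rewrite the integrand as $\log_v(\epsilon/\diamG(z\wedge_{\zetaG}\zeta))$ on the set of $z$ retracting to $[\zeta,\zeta_\epsilon)$ (and $0$ elsewhere, so the absolute value may indeed be dropped), express it by the layer-cake formula $h(z)=\int_{\diamG(\zeta)}^{\epsilon} \mathbf{1}_{B_{\vv_s}(\zeta_{(s)})^-}(z)\,\frac{ds}{s\ln q_v}$, and apply Tonelli so that the mass bound of the preceding lemma can be integrated in $s$ directly, yielding $\frac{M(q_v-1)^\alpha}{\alpha\ln q_v}\bigl(\epsilon^\alpha-\diamG(\zeta)^\alpha\bigr)$ in one computation. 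The paper instead discretizes the segment by a partition $\{\epsilon_k\}$, writes the difference of kernels as a telescoping sum, bounds each term by the same lemma, and then performs the change of variables $\eta_k=\epsilon_k^\alpha$ together with summation by parts before letting the mesh tend to zero to recover a definite integral of $\log_v x$; the degenerate case $\diamG(\zeta)=0$ then needs a separate monotone-convergence step, whereas in your version it is absorbed automatically because $\nu$ gives no mass to type I points and $\int_0^\epsilon s^{\alpha-1}\,ds$ converges. Both proofs rest on the same key input, the estimate $\nu(B_{\vv}(\zeta_{a,r})^-)\leq M(q_v-1)^\alpha r^\alpha$, and both apply it with the parameter taken to be $\diamG$ of the point on the segment, exactly as the paper does in its own proof. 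The only bookkeeping you leave implicit is the joint measurability of $(z,s)\mapsto \mathbf{1}\{\diamG(z\wedge_{\zetaG}\zeta)<s\}$ needed for Tonelli, which follows from continuity of the retraction of $\pberk$ onto $[\zeta,\zetaG]$ and is routine; your identification of the sublevel sets $\{z:\diamG(z\wedge_{\zetaG}\zeta)<s\}$ with the open balls $B_{\vv_s}(\zeta_{(s)})^-$ is correct (the boundary cases $z\wedge_{\zetaG}\zeta=\zeta_{(s)}$ fall outside both sets, and the endpoint value $s=\diamG(\zeta)$ is negligible for the $ds$-integral). In exchange for invoking Fubini--Tonelli, your route buys a shorter argument and a uniform treatment of the endpoint case; the paper's route stays at the level of elementary Riemann-sum manipulations.
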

\begin{proof}
Let $\vv\in T_{\zeta_\epsilon}$ be the direction towards $\zeta$, and note that the integrand is zero on $U=\pberk \setminus B_{\vv}(\zeta_\epsilon)^-$, so we must estimate $\left|\int_{U} \log_v \delta(z, \zeta_\epsilon)_{\zetaG} - \log_v \delta(z, \zeta)_{\zetaG} d\nu(z)\right|\ .$ Let $\epsilon = \epsilon_0 >\epsilon_1 > \epsilon_2 > ... > \epsilon_N = \diamG(\zeta)$ be a partition of the interval $[\diamG(\zeta), \epsilon]$. For each $k=0,1,..., N-1$, let $\zeta_k$ denote the point on the segment $[\zeta, \zeta_\epsilon]$ with $\diamG(\zeta_k) = \epsilon_k$. Let $\vv_k\in T_{\zeta_k}$ denote the unique direction towards $\zeta_{k+1}$. We will sometimes also write $\diamG(\zeta) = \epsilon_\zeta$.\\

We begin by rewriting the integral $\int_U \log_v \delta(z, \zeta_\epsilon)_{\zetaG} - \log_v \delta(z, \zeta)_{\zetaG} d\nu(z)$ as a telescoping sum $$S(\{\epsilon_k\}):= \sum_{k=0}^{N-1} \displaystyle \int_U \log_v \delta(z, \zeta_k)_{\zetaG} - \log_v \delta(z, \zeta_{k+1})_{\zetaG} d\mu_\phi(z)\ .$$ Each integrand is bounded above by $\log_v \epsilon_{k+1} - \log_v \epsilon_k$ on the ball $B_{\vv_k}(\zeta_k)^-$, and is constant off of this ball.
In particular, we have \begin{equation}\label{eq:SNestmeasure}S(\{\epsilon_k\}) \leq \sum_{k=0}^{N-1} (\log_v \epsilon_{k+1} - \log_v \epsilon_k )\nu(B_{\vv_k}(\zeta_k)^-)\ .\end{equation} By the preceeding lemma,  $$\nu(B_{\vv_k}(\zeta_k)^-) \leq M (q_v-1)^\alpha\epsilon_k^{\alpha}$$ where $M, \alpha$ are the H\"older constant and exponent for $\phi$ with respect to the small metric $\textbf{d}_{\pberk}$. Inserting this into (\ref{eq:SNestmeasure}) gives \begin{equation*}S(\{\epsilon_k\}) \leq M(q_v-1)^\alpha\sum_{k=0}^{N-1} (\log_v \epsilon_{k+1} - \log_v\epsilon_k) \epsilon_k^\alpha\ .\end{equation*}  Making a change of variables $\eta_k = \epsilon_k^\alpha$, we find \begin{equation}\label{eq:SNestnomeasure}S(\{\epsilon_k\}) \leq \frac{M(q_v-1)^\alpha}{\alpha}\sum_{k=0}^{N-1} (\log_v \eta_{k+1} - \log_v\eta_k) \eta_k\ .\end{equation} Applying summation by parts to the above sum gives \begin{equation}\label{eq:sbp}\sum_{k=0}^{N-1} (\log_v \eta_{k+1} - \log_v \eta_k) \eta_k = (\log_v \eta_{N}) \eta_{N} - (\log_v \eta_{0}) \eta_0 - \sum_{k=0}^{N-1} \log_v \eta_{k+1} (\eta_{k+1} - \eta_k)\ .\end{equation}

Suppose now that $\diamG(\zeta) >0$. Let $||\epsilon_k|| = \sup_{k=1,..., N} (\epsilon_{k}-\epsilon_{k-1})$ denote the mesh of the partition $\{\epsilon_k\}$. Taking the limit as $||\epsilon_k||\to 0$, the expression in (\ref{eq:sbp}) becomes a definite integral:

\begin{align}
\lim_{||\eta_k||\to 0} \sum_{k=0}^{N-1} (\log_v \eta_{k+1} - \log_v \eta_k)\eta_k &= (\alpha\cdot  \log_v \epsilon_\zeta)\epsilon_\zeta^\alpha - (\alpha\cdot \log_v \epsilon) \epsilon^\alpha - \int_{\epsilon^\alpha}^{\epsilon_\zeta^\alpha} \log_v x\  dx\nonumber \\
& =\alpha((\log_v \epsilon_\zeta)\epsilon_\zeta - (\log_v \epsilon)\epsilon) - \left. \left(x\log_v x-\frac{1}{\ln(q_v)} x\right)\right|_{\epsilon^\alpha}^{\epsilon_\zeta^\alpha}\nonumber\\
& = \frac{1}{\ln(q_v)} (\epsilon^\alpha - \epsilon_\zeta^\alpha)\label{eq:actualintegral}\ .
\end{align}
Using the fact that $||\epsilon_k|| \to 0$ if and only if $||\eta_k|| \to 0$, the estimates in (\ref{eq:SNestnomeasure}), (\ref{eq:sbp}) and (\ref{eq:actualintegral}) give 
\begin{align*}
\left|\int_U \log_v \delta(z, \zeta_\epsilon)_{\zetaG} - \log_v \delta(z, \zeta)_{\zetaG} d\nu(z)\right| &= \lim_{||\epsilon_k||\to 0} |S(\{\epsilon_k\})|\\
& \leq \left|\frac{M(q_v-1)^\alpha}{\alpha} \lim_{||\eta_k||\to 0} \sum_{k=0}^{N-1} (\log_v \eta_{k+1} - \log_v \eta_k) \eta_k\right|\\
& =\frac{M(q_v-1)^\alpha}{\alpha|\ln(q_v)|} (\epsilon^\alpha - \epsilon_\zeta^\alpha)\ .
\end{align*}

In the case that $\diamG(\zeta) = 0$, for every $\delta>0$ let $\zeta_\delta$ be the unique point on $[\zeta, \zeta_\epsilon]$ with $\diamG(\zeta_\delta)=\delta$. If we take partitions $\{\epsilon_k\}$ of the smaller interval $[\delta, \epsilon]$, the above estimates imply 
\begin{align}
\left|\int_U \log_v \delta(z, \zeta_\epsilon)_{\zetaG} - \log_v \delta(z, \zeta_\delta)_{\zetaG} d\nu(z)\right| &= \lim_{||\epsilon_k||\to 0} |S(\{\epsilon_k\})|\nonumber \\
& \leq \left|\frac{M(q_v-1)^\alpha}{\alpha} \lim_{||\eta_k||\to 0} \sum_{k=0}^{N-1} (\log_v \eta_{k+1} - \log_v \eta_k) \eta_k\right|\nonumber \\
& =\frac{M(q_v-1)^\alpha}{\alpha|\ln(q_v)|} (\epsilon^\alpha - \delta^\alpha)\label{eq:deltaintegralestimate}\ .
\end{align}
The integrand $\log_v \delta(z, \zeta_\epsilon)_{\zetaG} - \log_v \delta(z, \zeta_\delta)_{\zetaG}$ is non-negative on $U$ and is non-decreasing as $\delta \to 0$. By the monotone convergence theorem, taking the limit as $\delta\to 0 = \diamG(\zeta)$ and applying (\ref{eq:deltaintegralestimate}) gives
\begin{align*}
\left|\int_U \log_v \delta(z, \zeta_\epsilon)_{\zetaG} - \log_v \delta(z, \zeta)_{\zetaG} d\nu(z)\right| &= \lim_{\delta\to 0}\left| \int_U \log_v \delta(z, \zeta_\epsilon)_{\zetaG} - \log_v \delta(z, \zeta_\delta)_{\zetaG} d\nu(z)\right|\\
& \leq\lim_{\delta\to 0}\ \frac{M(q_v-1)^\alpha}{\alpha|\ln(q_v)|}\cdot (\epsilon^\alpha - \delta^\alpha)\\
& = \frac{M(q_v-1)^\alpha}{\alpha|\ln(q_v)|} \epsilon^\alpha
\end{align*} which is the asserted bound in the case $\diamG(\zeta) = 0$.

\end{proof}

We can finally piece this together to obtain the logarithmic equidistribution:

\begin{thm}\label{thm:quantlogeq}
Let $K$ be a complete, algebraically closed non-Archimedean valued field of characteristic 0. Let $\phi\in K(z)$ be a rational map of degree $d\geq 2$ and suppose that $\phi$ has bad reduction. Let $\mathcal{L}_\phi>1$ denote a Lipschitz constant for the action of $\phi$ on $\mathbb{P}^1(K)$ in the chordal metric and let $C_\phi$ be the constant from Lemma~\ref{lem:explicitequidistribution}. Let $M, \alpha$ be the H\"older constant and exponent (resp.) for the potential function $u_\phi(z, \zetaG)$ with respect to the small metric $\textbf{d}_{\pberk}$. For $n$ sufficiently large and for any $\zeta\in \pberk$, we have \begin{equation}\label{eq:technicalbound}\left|\int \log_v \delta(z, \zeta)_{\zetaG} d(\nu_{\phi^n} - \mu_\phi)\right| \leq \frac{36n\log_v \mathcal{L}_\phi+4C_\phi}{d^n-1} + \frac{M(q_v-1)^\alpha}{\alpha |\ln(q_v)|} (\mathcal{L}_\phi^{-n\alpha}- \diamG(\zeta)^\alpha)\ .\end{equation}

\end{thm}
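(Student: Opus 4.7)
The plan is to execute the three-term decomposition (\ref{eq:logdecompfirst})--(\ref{eq:logdecompthird}) already outlined at the start of this section, making the cut-off $\epsilon$ explicit in terms of the bound on the support of $\nu_{\phi^n}$ coming from Theorem~\ref{thm:bounds}. Concretely, I would set $\epsilon := \tilde{\mathcal{L}}_\phi^{-3n}$ and take $n$ large enough that $\epsilon \in (0, q_v^{-1})$ and that Theorem~\ref{thm:bounds} applies; with this choice $\rho(\zeta_\epsilon, \zetaG) = 3n \log_v \tilde{\mathcal{L}}_\phi$.

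First, I would show that term (\ref{eq:logdecompfirst}) vanishes. Any weighted point $P$ of $\nu_{\phi^n}$ satisfies $\rho(P, \zetaG) < 3n\log_v \tilde{\mathcal{L}}_\phi = \rho(\zeta_\epsilon, \zetaG)$, so $P$ lies strictly ``above'' $\zeta_\epsilon$ in the tree and, in particular, outside the open Berkovich ball $B_{\vv}(\zeta_\epsilon)^-$, where $\vv \in T_{\zeta_\epsilon}$ points towards $\zeta$. The integrand $\log_v \delta(z,\zeta)_{\zetaG} - \log_v \delta(z,\zeta_\epsilon)_{\zetaG}$ vanishes identically off this ball (this is the same linearity-of-Hsia-kernel observation exploited in Proposition~\ref{prop:telescopingprop}), so the integral against $\nu_{\phi^n}$ is zero.

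Next I would bound the remaining two terms by the existing tools. For (\ref{eq:logdecompsecond}), Lemma~\ref{lem:explicitequidistribution} applied to $\zeta_\epsilon$ in place of $\zeta$ gives the estimate $(4C_\phi + 12\rho(\zeta_\epsilon,\zetaG))/(d^n-1) = (4C_\phi + 36n\log_v \tilde{\mathcal{L}}_\phi)/(d^n-1)$, matching the first term of the claimed bound. For (\ref{eq:logdecompthird}), I would invoke Proposition~\ref{prop:telescopingprop} with $\nu = \mu_\phi$, using that the equilibrium measure has H\"older continuous potentials with constants $M$ and $\alpha$; this yields $\frac{M(q_v-1)^\alpha}{\alpha|\ln q_v|}(\epsilon^\alpha - \diamG(\zeta)^\alpha)$. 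Summing the three pieces gives the asserted inequality. The boundary case $\diamG(\zeta) \geq \epsilon$ reduces to $\zeta_\epsilon = \zeta$, in which case terms (\ref{eq:logdecompfirst}) and (\ref{eq:logdecompthird}) collapse and the bound follows directly from Lemma~\ref{lem:explicitequidistribution}.

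The main obstacle is the vanishing step: translating the global $\rho$-bound of Theorem~\ref{thm:bounds} into a containment statement for $B_{\vv}(\zeta_\epsilon)^-$. This uses the tree structure of $\pberk$ together with the $\PGL_2(\mathcal{O})$-invariance of $\rho$ to put $\zeta$ and $\zeta_\epsilon$ into a convenient co\"ordinate system, with care needed when $\zeta$ lies in the ``infinity'' direction from $\zetaG$ (where $\diamG$ is defined via inversion). Matching the precise constants that appear in the claimed inequality is then a matter of bookkeeping over the choice of $\epsilon$.
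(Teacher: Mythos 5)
Your proposal is correct and follows essentially the same route as the paper: the same three-term decomposition with cut-off $\epsilon=\mathcal{L}_\phi^{-3n}$, vanishing of the first term because the support bound on $\nu_{\phi^n}$ keeps all weighted points out of the ball beyond $\zeta_\epsilon$, Lemma~\ref{lem:explicitequidistribution} applied at $\zeta_\epsilon$ for the middle term, and Proposition~\ref{prop:telescopingprop} with $\nu=\mu_\phi$ for the third, together with the same reduction of the ``near'' case directly to Lemma~\ref{lem:explicitequidistribution}. The only cosmetic difference is that you invoke Theorem~\ref{thm:bounds} with $\tilde{\mathcal{L}}_\phi$ where the paper uses Theorem~\ref{thm:boundoncrucialset} with $\mathcal{L}_\phi>1$ directly, which only relabels the constant.
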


\begin{proof}

 By Theorem~\ref{thm:boundoncrucialset}, we find that a point $P$ with $w_{\phi^n}(P)>0$ for some $n$ must satisfy $$\rho(P, \zetaG) \leq \max\left(n \log_v \mathcal{L}_\phi, 2(n-1) \log_v \mathcal{L}_\phi - \log_v B_\phi - \frac{1}{p-1}\right) \ .$$ We can find a constant $N_0 = N_0(\phi)$ such that, for $n\geq N_0$ and $w_{\phi^n}(P) >0$, we have \begin{equation*}\rho(P, \zetaG) \leq 3n\log_v \mathcal{L}_\phi\ .\end{equation*} Increasing $N_0$ if necessary, we can also assume that $\mathcal{L}_\phi^{-3n} < \frac{1}{q_v}$ for $n\geq N_0$; note that this additional constraint depends only on $\phi$ and $K$.

Fix $\zeta\in \hberk$ and $n\geq N_0$. If $\rho(\zeta, \zetaG) \leq 3n\log_v \mathcal{L}_\phi$, then we may apply Lemma~\ref{lem:explicitequidistribution} to find \begin{equation}\label{eq:integralboundnear}\left|\int \log_v \delta(z, \zeta)_{\zetaG}d(\nu_{\phi^n} - \mu_\phi)\right|  \leq \frac{4C_\phi + 36n \log_v \mathcal{L}_\phi}{d^n-1}\ , \end{equation}which is stronger than the  bound in (\ref{eq:technicalbound}).\\

If $\rho(\zeta, \zetaG) > 3n \log_v \mathcal{L}_\phi$, then let $\zeta_\epsilon$ denote the point on the path $[\zeta, \zetaG]$ with $\rho(\zeta_\epsilon, \zetaG) = 3n \log_v \mathcal{L}_\phi$. More explicitly, $\epsilon :=\diamG(\zeta_\epsilon) = \mathcal{L}_\phi^{-3n}$. Recalling the decomposition given above, we rewrite our integral as 
\begin{align}
\int \log_v \delta(z, \zeta)_{\zetaG} d(\nu_{\phi^n} - \mu_\phi) & = \int \log_v \delta(z, \zeta)_{\zetaG} - \log_v \delta(z, \zeta_\epsilon)_{\zetaG} d\nu_{\phi^n}\tag{\ref{eq:logdecompfirst}} \\
& + \int \log_v \delta(z, \zeta_\epsilon)_{\zetaG} d(\nu_{\phi^n} - \mu_\phi)\tag{\ref{eq:logdecompsecond}}\\
& + \int \log_v \delta(z, \zeta_\epsilon)_{\zetaG} - \log_v \delta(z, \zeta)_{\zetaG} d\mu_\phi\tag{\ref{eq:logdecompthird}}\ .
\end{align}

Since $\rho(\zeta, \zetaG), \rho(\zeta_\epsilon, \zetaG) \geq 3n \log_v \mathcal{L}_\phi$, Theorem~\ref{thm:boundoncrucialset} guarantees that $\nu_{\phi^n}$ does not charge the segment $[\zeta, \zeta_\epsilon]$, hence (\ref{eq:logdecompfirst}) is zero. Applying Lemma~\ref{lem:explicitequidistribution} to (\ref{eq:logdecompsecond}), we find that $$\left|\int \log_v \delta(z, \zeta_\epsilon)d(\nu_{\phi^n}-\mu_\phi)\right| \leq \frac{4C_\phi + 36n\log_v \mathcal{L}_\phi}{d^n-1}\ .$$ Finally, by Proposition~\ref{prop:telescopingprop} the term (\ref{eq:logdecompthird}) can be bounded as $$\left|\int \log_v \delta(z, \zeta_\epsilon)_{\zetaG} - \log_v \delta(z, \zeta)_{\zetaG} d\mu_\phi\right| \leq \frac{M(q_v-1)^\alpha}{\alpha |\ln(q_v)|} (\mathcal{L}_\phi^{-n\alpha} - \diamG(\zeta)^\alpha)\ .$$ Combining these gives
\begin{equation}\label{eq:integralboundfar}
\left|\int \log_v \delta(z, \zeta)_{\zetaG} d(\nu_{\phi^n} - \mu_\phi)\right| \leq \frac{36n\log_v \mathcal{L}_\phi+4C_\phi}{d^n-1} + \frac{M(q_v-1)^\alpha}{\alpha |\ln(q_v)|} (\mathcal{L}_\phi^{-n\alpha}- \diamG(\zeta)^\alpha)
\end{equation}

The inequalities in (\ref{eq:integralboundnear}) and (\ref{eq:integralboundfar}) imply the bound asserted in the statement of the theorem.

\end{proof}

We can apply the preceeding technical theorem to prove Theorem~\ref{thm:logeq} asserted in the introduction:

\begin{proof}[Proof of Theorem~\ref{thm:logeq}]
Fix $\epsilon>0$, and choose $n\gg 0$ so that Theorem~\ref{thm:quantlogeq} applies and such that $$\frac{36n\log_v \mathcal{L}_\phi + 4C_\phi}{d^n-1} + \frac{M(q_v-1)^\alpha}{\alpha |\ln(q_v)|} (\mathcal{L}_\phi^{-n\alpha}) < \epsilon\ .$$ For any $\zeta_0\in \hberk$ and $z,\zeta\in \pberk$. Recall that $$\delta(z,\zeta)_{\zeta_0} = \frac{\delta(z,\zeta)_{\zetaG}}{\delta(z,\zeta_0)_{\zetaG} \delta(\zeta, \zeta_0)_{\zetaG}}\ .$$Thereofre, it sufficies to prove the result for $\zeta_0=\zetaG$. Theorem~\ref{thm:quantlogeq} implies that for any $\zeta\in \pberk$, we have 
\begin{align*}
 \left| \int \log_v \delta(z, \zeta)_{\zetaG} d(\nu_{\phi^n} - \mu_\phi)\right| &< \frac{36n\log_v \mathcal{L}_\phi + 4C_\phi}{d^n-1} + \frac{M(q_v-1)^\alpha}{\alpha |\ln(q_v)|} (\mathcal{L}_\phi^{-n\alpha}- \diamG(\zeta)^\alpha)\\
& \leq  \frac{36n\log_v \mathcal{L}_\phi + 4C_\phi}{d^n-1} + \frac{M(q_v-1)^\alpha}{\alpha |\ln(q_v)|} \mathcal{L}_\phi^{-n\alpha}\\
& < \epsilon\ .
\end{align*}
\end{proof}

As an application, we have a proof of Corollary~\ref{cor:potentialfunctionconvergences}:

\begin{proof}[Proof of Corollary~\ref{cor:potentialfunctionconvergences}]
The first result is essentially a restatement of the convergence given in Theorem~\ref{thm:logeq}, for $$u_{\nu_{\phi^n}}(z, \zeta) = -\int \log_v \delta(w, z)_{\zeta} d\nu_{\phi^n}(w)\ .$$

For the second result, we first claim that \begin{equation}\label{eq:iteratedintegralconvergence}\iint \log_v ||x,y|| d\nu_{\phi^n}(x)d\nu_{\phi^n}(y) \to \iint \log_v ||x,y|| d\mu_\phi(x) d\mu_\phi(y)\ .\end{equation} Fix $\epsilon >0$. The integrals above can be rewritten in terms of the respective potential functions $u_{\nu}(\cdot, \zetaG)$; more precisely, for any Borel measure $\nu$ we have $\iint \log_v ||x,y|| d\nu(x)d\nu(y) = -\int u_{\nu}(y, \zetaG) d\nu(y)$. 

By the uniform convergence of the potential functions $u_{\phi^n}(\cdot, \zetaG)$, we may choose $N_0$ sufficiently large so that \begin{equation}\label{eq:closepotentials}|u_{\phi^n}(y, \zetaG) - u_\phi(y, \zetaG)|< \epsilon\end{equation} for all $n\geq N_0$. Since $u_{\phi^n}(\cdot, \zetaG)$ is continuous on $\pberk$, by \cite{KJ} Theorem 2 we may increase $N_0$ if necessary to ensure that \begin{equation}\label{eq:closepotentialintegrals}\left|\int u_{\phi^n}(y, \zetaG) - u_\phi(y, \zetaG) d(\nu_{\phi^n} - \mu_\phi)(y)\right|< \epsilon\end{equation} for $n\geq N_0$. Combining (\ref{eq:closepotentials}) and (\ref{eq:closepotentialintegrals}) establishes the claim: for $n\geq N_0$,
\begin{align*}
\left|\iint \log_v ||x,y|| d\nu_{\phi^n}(x)d\nu_{\phi^n}(y) -\right. & \left. \iint \log_v ||x,y|| d\mu_\phi(x)d\mu_\phi(y)\right| \\
& = \left|\int u_{\nu_{\phi^n}}(y, \zetaG) d\nu_{\phi^n}(y) - \int u_\phi(y, \zetaG) d\mu_\phi(y)\right|\\
& \leq \left|\int u_{\nu_{\phi^n}}(y, \zetaG) - u_\phi(y, \zetaG) d\nu_{\phi^n}\right| \\
& \hspace{0.1cm}+ \left| \int u_\phi(y, \zetaG) d(\nu_{\phi^n} - \mu_\phi)(y)\right|\\
& < 2\epsilon\ .
\end{align*}

We now show the uniform convergence of the two-variable Arakelov-Green's functions $g_{\nu_{\phi^n}}(x,y)$. For any probability measure, the Arakelov-Green's function admits the decomposition $$g_{\nu}(x,y) = -\log_v ||x,y|| + u_{\nu}(x, \zetaG) + u_{\nu}(y, \zetaG) + C_\nu\ .$$ The convergence of the potential functions follows from part 1 of the Corollary. We need to only show that the constants $C_{\nu_{\phi^n}}$ converge to the constant $C_\phi$ associated to $g_\phi(x,y)$. These constants are given explicitly by $$C_{\nu} = -\iint g_\nu(x,y) d\nu(x)d\nu(y) = \iiint \log_v \delta(x,y)_\zeta d\nu(\zeta) d\nu(x)d\nu(y)\ .$$ This latter integral can be decomposed as \begin{align*}
\iiint \log_v \delta(x,y)_\zeta d\nu(\zeta)d\nu(x)d\nu(y) &= \iint \log_v ||x,y||d\nu(x)d\nu(y) - \iint \log_v ||x,\zeta||d\nu(\zeta)d\nu(x)\\
& \hspace{1cm}- \iint \log_v ||y, \zeta|| d\nu(y)d\nu(\zeta)\ .
\end{align*} Thus the convergence of the $C_{\nu_{\phi^n}}$ to $C_\phi$ follows from (\ref{eq:iteratedintegralconvergence}), and hence $g_{\nu_{\phi^n}}(x,y)$ converges uniformly to $g_\phi(x,y)$.

For the third assertion, we rely on a result of Okuyama: by \cite{KJ} Theorem 2 the measures $\nu_{\phi^n}$ converge weakly to $\mu_\phi$. By the first assertion above, $u_{\nu_{\phi^n}}(c, \zetaG) \to u_{\mu_\phi}(c, \zetaG)$ for each critical point $c$ of $\phi$. Then \cite{Ok} Lemma 3.1 implies \begin{equation*}\int_{\pberk} \log_v [\phi^\#] d\nu_{\phi^n} \to L_v(\phi):= \int_{\pberk} \log_v [\phi^\#] d\mu_\phi\end{equation*} as asserted.

\end{proof}

\section{Bounds on $\MinResLoc(\phi^n)$ and $\Bary(\mu_\phi)$}\label{sect:barybounds}

In this section we give explicit bounds for the distance from $\zetaG$ to $\MinResLoc(\phi^n)$ and to $\Bary(\mu_\phi)$. The main lemma used in this task is

\begin{lemma}\label{lem:coefficientlemma}
Let $\Phi$ be a normalized lift of $\phi$. Let $\Phi^n = [F, G]$ be a normalized lift for the $n$th iterate of $\phi$, where $F(X,Y) = \alpha_D X^D + ... + \alpha_0 Y^D$, $G(X,Y) = \beta_D X^D + ... + \beta_0 Y^D$ and $D=d^n$. Then \begin{align*} \max (|\alpha_0|, |\beta_0|) &\geq |\Res(\Phi)|^{\frac{d^n-1}{d-1}}\\ \max (|\alpha_{d^n}|, |\beta_{d^n}|) & \geq |\Res(\Phi)|^{\frac{d^n-1}{d-1}}\ .\end{align*}
\end{lemma}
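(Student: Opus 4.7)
The plan is to realize $\alpha_0, \beta_0$ (respectively $\alpha_D, \beta_D$) as the two coordinates of $\Phi^n(0,1) \in K^2$ (respectively $\Phi^n(1,0) \in K^2$), and then to bound the $K^2$-sup-norm of these images from below by tracking how $\Phi$ distorts the sup-norm under iteration. I would first work with the unnormalized $n$-fold composition $\Phi^{[n]} = (F^{[n]}, G^{[n]})$, obtained by composing $\Phi$ with itself $n$ times as a polynomial map $K^2 \to K^2$ with no intermediate normalization. Since $\Phi \in \mathcal{O}[X,Y]^2$, also $\Phi^{[n]} \in \mathcal{O}[X,Y]^2$, so its content $c_n$ (the maximum absolute value of its coefficients) satisfies $c_n \leq 1$. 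I would derive the asserted inequalities for $\Phi^{[n]}$ and then transfer them to the normalized lift $\Phi^n = [F,G]$ at the end.

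The central step is the single-iterate inequality
\begin{equation*}
\max(|F_1(v)|, |G_1(v)|) \;\geq\; |\Res(\Phi)| \cdot \max(|v_1|, |v_2|)^d \qquad \text{for all } v = (v_1,v_2) \in K^2,
\end{equation*}
where $\Phi = [F_1, G_1]$. To prove it I would invoke the classical Bezout identities produced by the adjugate of the Sylvester matrix: there exist homogeneous polynomials $A_X, B_X, A_Y, B_Y \in \mathcal{O}[X,Y]$ of degree $d-1$ satisfying $A_X F_1 + B_X G_1 = \Res(\Phi)\, X^{2d-1}$ and $A_Y F_1 + B_Y G_1 = \Res(\Phi)\, Y^{2d-1}$. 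Evaluating these at $v$ and using that $|A_\bullet(v)|, |B_\bullet(v)| \leq \max(|v_1|,|v_2|)^{d-1}$ (the cofactors have $\mathcal{O}$-coefficients), the ultrametric inequality gives $|\Res(\Phi)| \cdot |v_i|^{2d-1} \leq \max(|v_1|,|v_2|)^{d-1} \cdot \max(|F_1(v)|, |G_1(v)|)$ for $i = 1, 2$; taking the maximum over $i$ and dividing by $\max(|v_1|,|v_2|)^{d-1}$ gives the claim.

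Next I would iterate. Writing $\Phi^{[n]} = \Phi \circ \Phi^{[n-1]}$ and applying the key inequality to the vector $\Phi^{[n-1]}(v_0)$ yields the recursion $M_n \geq |\Res(\Phi)| \cdot M_{n-1}^d$ for $M_n := \max(|F^{[n]}(v_0)|, |G^{[n]}(v_0)|)$. Induction on $n$ then gives $M_n \geq |\Res(\Phi)|^{1 + d + \cdots + d^{n-1}} \max(|v_{0,1}|, |v_{0,2}|)^{d^n} = |\Res(\Phi)|^{(d^n-1)/(d-1)} \max(|v_{0,1}|, |v_{0,2}|)^{d^n}$. Specializing to $v_0 = (0,1)$ (resp.\ $v_0 = (1,0)$) picks out precisely $\max(|\alpha_0^{[n]}|, |\beta_0^{[n]}|)$ (resp.\ $\max(|\alpha_D^{[n]}|, |\beta_D^{[n]}|)$), the unnormalized coefficients we care about, and the right-hand side becomes $|\Res(\Phi)|^{(d^n-1)/(d-1)}$.

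Finally, I would pass from $\Phi^{[n]}$ to the normalized lift $\Phi^n = [F,G]$. By definition $[F,G]$ equals $[F^{[n]}/\lambda, G^{[n]}/\lambda]$ up to a unit in $\mathcal{O}^\times$, where $\lambda \in K^\times$ has $|\lambda| = c_n \leq 1$; thus every coefficient of $F, G$ has absolute value at least that of the corresponding coefficient of $F^{[n]}, G^{[n]}$, and the two stated bounds follow. I expect the main obstacle to be the careful bookkeeping around normalization, together with citing the $\mathcal{O}$-integrality of the Bezout cofactors $A_\bullet, B_\bullet$ --- this follows from Cramer's rule applied to the Sylvester matrix of $F_1, G_1$, whose entries are coefficients of $F_1, G_1$ and therefore lie in $\mathcal{O}$.
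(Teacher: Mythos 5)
Your proposal is correct and follows essentially the same route as the paper: evaluate the lift at $(0,1)$ and $(1,0)$ and iterate the inequality $\max(|F_1(v)|,|G_1(v)|) \geq |\Res(\Phi)|\cdot\max(|v_1|,|v_2|)^d$, which the paper simply cites as Lemma 10.1 of Baker--Rumely while you reprove it via the Sylvester/Bezout identities. Your explicit handling of the passage from the plain composition $\Phi^{[n]}$ to the normalized lift (dividing by a content of absolute value at most $1$, which only increases coefficients) is a point the paper's proof leaves implicit, and it is handled correctly.
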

\begin{proof}
We observe that $|\alpha_0| = |F(0,1)|, |\alpha_D| = |F(1,0)|$ and $|\beta_0| = |G(0,1)|, |\beta_D| = |G(1,0)|$. For a pair $(x,y)$, let $||(x,y)|| = \max(|x|, |y|)$. Then by \cite{BR} Lemma 10.1, we have 
\begin{align*}
\max( |\alpha_0|, |\beta_0|) = ||\Phi^n(0,1)|| & \geq ||\Phi^{n-1}(0,1)||^d\cdot |\Res(\Phi)|\\
& \geq ||\Phi^{n-2}(0,1)||^{d^2} \cdot |\Res(\Phi)|^{1+d}\\
& \dots\\
& \geq ||(0,1)|| \cdot |\Res(\Phi)|^{1+d+...+d^{n-1}} = |\Res(\Phi)|^{\frac{d^n-1}{d-1}}\ .
\end{align*}
A similar argument holds for $\max(|\alpha_{d^n}|, |\beta_{d^n}|)$. 

\end{proof}

\subsection{Bounds on $\MinResLoc(\phi^n)$ and $\Bary(\mu_\phi)$}

Lemma~\ref{lem:coefficientlemma} above gives us a bound on the size of leading and constant coefficients of the polynomials that form a normalized lift of $\phi^n$. Similar bounds appeared in the proof of \cite{Ru1} Proposition 1.8, which gave a bound for the set $\MinResLoc(\phi)$. We can use the previous lemma to strengthen this bound for iterates:

\begin{prop}\label{prop:minreslocbd}
Let $d\geq 2$ and let $R=\frac{2}{d-1} \ordRes(\phi)$. Fix a point $x\in\mathbb{P}^1(K)$. Along the segment $[\zetaG, x]$, the function $\ordRes_{\phi^n}$ satisfies \begin{equation}\label{eq:ordreslowerbound}\frac{1}{d^{2n}-d^n}\ordRes_{\phi^n}(\zeta) \geq \rho(\zetaG, \zeta)+\frac{1}{d^{2n}-d^n} \ordRes_{\phi^n}(\zetaG) - R\ .\end{equation} 

Let $\xi$ be the unique point in $[\zetaG, x]$ such that $\rho(\zetaG, \xi) = \frac{2}{d-1}\ordRes(\phi)$. Then for each $n$, the function $\ordRes_{\phi^n}(\cdot)$ is increasing along $[\xi, x]$ as one moves away from $\xi$.
\end{prop}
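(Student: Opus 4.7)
The plan is to establish the inequality by an explicit computation of $\ordRes_{\phi^n}$ along the segment $[\zetaG, x]$, followed by direct application of Lemma~\ref{lem:coefficientlemma}. First I rewrite the desired bound as
\begin{equation*}
\ordRes_{\phi^n}(\zeta) - \ordRes_{\phi^n}(\zetaG) \geq d^n(d^n-1)\bigl(\rho(\zetaG, \zeta) - R\bigr).
\end{equation*}
Both sides are invariant under conjugating $\phi$ by $\gamma \in \PGL_2(\mathcal{O})$ (which stabilizes $\zetaG$): $\rho$ is $\PGL_2(K)$-invariant, and $R = \frac{2}{d-1}\ordRes(\phi) = \frac{2}{d-1}\ordRes_\phi(\zetaG)$ is unchanged. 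Choosing $\gamma$ with $\gamma^{-1}(x) = 0$, I may assume $x = 0$. Parametrize type II points on $[\zetaG, 0]$ as $\zeta_{0,r}$ and set $t := \rho(\zetaG, \zeta_{0,r}) = -\log_v r \geq 0$; the other types follow by continuity of $\ordRes_{\phi^n}$.

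Writing $D = d^n$, I next conjugate $\Phi^n = (F, G)$ by $\gamma_r = \mathrm{diag}(y, 1)$ for any $y \in K^\times$ with $|y| = r$, producing a lift $(F^{\gamma_r}, G^{\gamma_r})$ of $(\phi^n)^{\gamma_r}$ whose coefficients have absolute values $|\alpha_i|r^{i-1}$ and $|\beta_i|r^i$. Using the identities $\Res(cF, G) = c^D \Res(F, G)$ and $\Res(F \circ A, G \circ A) = (\det A)^{D^2}\Res(F, G)$ for $A \in \GL_2$, one computes $\Res(F^{\gamma_r}, G^{\gamma_r}) = y^{D(D-1)}\Res(\Phi^n)$. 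Setting $M := \max_i\bigl(|\alpha_i|r^{i-1},\, |\beta_i|r^i\bigr)$ and dividing through by $M$ to normalize yields the key identity
\begin{equation*}
\ordRes_{\phi^n}(\zeta_{0, r}) - \ordRes_{\phi^n}(\zetaG) = D(D-1)\,t + 2D \log_v M.
\end{equation*}

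Hence (\ref{eq:ordreslowerbound}) is equivalent to $-\log_v M \leq \frac{D-1}{d-1}\ordRes(\phi)$. Since $M$ is a maximum and dominates the $Y^D$-coefficients $|\alpha_0| r^{-1}$ of $F^{\gamma_r}$ and $|\beta_0|$ of $G^{\gamma_r}$, and since $t \geq 0$,
\begin{equation*}
-\log_v M \leq \min\bigl(-\log_v|\alpha_0| - t,\ -\log_v|\beta_0|\bigr) \leq \min\bigl(-\log_v|\alpha_0|,\ -\log_v|\beta_0|\bigr).
\end{equation*}
Lemma~\ref{lem:coefficientlemma} gives that the right-hand side is at most $\frac{d^n-1}{d-1}\ordRes(\phi) = \frac{D-1}{2}R$; substituting back produces the asserted inequality.

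For the second assertion, I would use that $\ordRes_{\phi^n}$ is continuous, piecewise affine, and convex up on $\pberk$ (Rumely, \cite{Ru1}), hence attains its minimum on $[\zetaG, x]$ at some point $\zeta_*$. If $\rho(\zetaG, \zeta_*) > R$, then applying (\ref{eq:ordreslowerbound}) at $\zeta_*$ gives $\ordRes_{\phi^n}(\zeta_*) > \ordRes_{\phi^n}(\zetaG)$, contradicting minimality. Hence $\zeta_* \in [\zetaG, \xi]$, and convexity forces $\ordRes_{\phi^n}$ to be non-decreasing on $[\xi, x]$. The only substantive calculation is the resultant identity $\Res(F^{\gamma_r}, G^{\gamma_r}) = y^{D(D-1)}\Res(\Phi^n)$; once that is in hand, Lemma~\ref{lem:coefficientlemma} matches the required bound precisely, and the convexity step is essentially formal.
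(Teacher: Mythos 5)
Your proof is correct and follows essentially the same route as the paper: reduce to $x=0$ by a $\GL_2(\mathcal{O})$ change of coordinates, compute how $\ordRes_{\phi^n}$ changes along $[\zetaG,0]$ under conjugation by $z\mapsto yz$ (your explicit resultant identity and normalization by $M$ rederive exactly the estimate the paper quotes from \cite{Ru1} Proposition 1.8), and then feed in Lemma~\ref{lem:coefficientlemma}; the monotonicity claim is handled by convexity of $\ordRes_{\phi^n}$ in both arguments. The only cosmetic point is that your conclusion ``non-decreasing on $[\xi,x]$'' upgrades to the stated ``increasing'': past the last minimizer, which you have shown lies in $[\zetaG,\xi]$, a convex, piecewise affine function must have strictly positive slopes, since a zero-slope segment beyond $\xi$ would force its endpoint to violate (\ref{eq:ordreslowerbound}).
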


\begin{proof}
The proof follows \cite{Ru1} Proposition 1.8 closely. After a change of co\"ordinates by some $\gamma\in \GL_2(\mathcal{O})$, we can assume that $x=0$. Let $\Phi^n = [F, G]$ be a normalized lift of $\phi^n$, where $D=D^n$, $F(X,Y) = a_{D}X^{D} + ... + a_0 Y^{D}$, $G(X,Y) = b_{D} X^{D} + ... + b_0 Y^{D}$, where $a_i, b_j \in \mathcal{O}$ and at least one coefficient is a unit. 

Given $A\in K^\times$, let $\tau_A(z) = Az$. In \cite{Ru1} Proposition 1.8, Rumely shows that \begin{align*}
\ordRes_{\phi^n}(\zeta_{0, |A|}) -& \ordRes_{\phi^n}(\zetaG)\\
&  \geq \max \left( -2D \ord(a_0) + (D^2+D)\ord(A), -2D \ord(b_0) + (D^2-D)\ord(A),\right.\\
& \left. -2D\ord(a_D) + (D-D^2)\ord(A), -2D \ord(b_D) + (-D-D^2)\ord(A)\right)\ .
\end{align*}

\noindent Using the bounds in Lemma~\ref{lem:coefficientlemma}, this gives that 
\begin{align}
\ordRes_{\phi^n}(\zeta_{0, |A|}) -& \ordRes_{\phi^n}(\zetaG)\nonumber \\
& \geq -2D\frac{d^n-1}{d-1} \ordRes(\phi) + \max\left( (D^2-D)\ord(A), (D-D^2) \ord(A)\right)\label{eq:ordresincreasing}\ .
\end{align}

Restricting ourselves to $\ord(A)>0$, the right side of (\ref{eq:ordresincreasing}) is $-2\frac{d^{2n}-d^n}{d-1} \ordRes(\phi) + (d^{2n}-d^n) \ord(A)$.  Thus, $$\frac{1}{d^{2n}-d^n}\ordRes_{\phi^n}(\zeta_{0, |A|}) \geq  \ord(A) - \frac{2}{d-1} \ordRes(\phi) + \frac{1}{d^{2n}-d^n} \ordRes_{\phi^n}(\zetaG)$$ which establishes the first claim.

When $\ord(A)=0$, the left hand side of (\ref{eq:ordresincreasing}) is exactly equal to 0. Thus, if $\ord(A)$ is chosen large enough so that the right hand side of (\ref{eq:ordresincreasing}) is positive, the function $\ordRes_{\phi^n}(\cdot)$ must be increasing for all larger values of $\ord(A)$. This is attained for $$(D^2-D) \ord(A) \geq \frac{2D(d^n-1)}{d-1} \ordRes(\phi)\ ,$$ or equivalently, inserting the definition of $D=d^n$, $$\ord(A) \geq \frac{2}{d-1} \ordRes(\phi)\ .$$ 

\end{proof}

\begin{cor}\label{cor:boundonminresloc}
Let $\phi\in K(z)$ be a rational function of degree $d\geq 2$. Let $R=\frac{2}{d-1} \ordRes(\phi)$. Then for each $n$, $$\MinResLoc(\phi^n) \subseteq B_\rho(\zetaG, R)\ .$$ In particular, $\diam(\MinResLoc(\phi^n)) \leq \frac{4}{d-1} \ordRes(\phi)\ .$
\end{cor}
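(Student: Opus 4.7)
The plan is to deduce the corollary directly from Proposition~\ref{prop:minreslocbd} by a convexity argument on the tree. Since $\pberk$ is uniquely path-connected and $\MinResLoc(\phi^n)$ consists of minima of the continuous piecewise-affine function $\ordRes_{\phi^n}(\cdot)$, it suffices to show that no point $P \in \hberk$ with $\rho(P,\zetaG) > R$ can minimize $\ordRes_{\phi^n}$.

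Suppose for contradiction that $P \in \MinResLoc(\phi^n)$ satisfies $\rho(P,\zetaG) > R$. Extend the segment $[\zetaG, P]$ to a type I point $x \in \mathbb{P}^1(K)$ (any endpoint of $\pberk$ lying past $P$ in the tree). Let $\xi$ be the unique point on $[\zetaG, x]$ with $\rho(\zetaG, \xi) = R$; by hypothesis, $P$ lies strictly past $\xi$ on this segment, i.e.\ $P \in [\xi, x] \setminus \{\xi\}$. Proposition~\ref{prop:minreslocbd} asserts that $\ordRes_{\phi^n}(\cdot)$ is (strictly) increasing along $[\xi, x]$ as one moves away from $\xi$, so $\ordRes_{\phi^n}(P) > \ordRes_{\phi^n}(\xi)$, contradicting the minimality of $\ordRes_{\phi^n}$ at $P$. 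Hence $\MinResLoc(\phi^n) \subseteq B_\rho(\zetaG, R)$.

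For the diameter bound, recall that $\MinResLoc(\phi^n)$ is either a single point or a segment in $\hberk$ (by the cited result \cite{Ru1}, Theorem 0.1), so any two of its points are connected by a path that lies in $B_\rho(\zetaG, R)$. The triangle inequality for $\rho$ then yields
\[
\diam(\MinResLoc(\phi^n)) \;\leq\; 2R \;=\; \frac{4}{d-1}\ordRes(\phi),
\]
as claimed. The only substantive step is the first paragraph's reduction, and it is essentially immediate once one has the increasing-along-$[\xi,x]$ statement; I do not anticipate a serious obstacle, provided one is careful that $x$ can indeed be chosen as a type I point on the ray through $P$ (which is always possible since $P \in \hberk$ lies on some segment from $\zetaG$ to a type I endpoint of the tree).
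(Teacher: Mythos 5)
Your argument is correct and is exactly the deduction the paper intends: the corollary is stated without a separate proof as an immediate consequence of Proposition~\ref{prop:minreslocbd}, and your contradiction via the point $\xi$ at distance $R$ (together with the triangle inequality for the diameter bound) supplies precisely that step. The only nuance is your parenthetical ``(strictly)'': even if one reads the proposition's ``increasing'' weakly, its first claim (\ref{eq:ordreslowerbound}) already gives $\ordRes_{\phi^n}(\zeta) > \ordRes_{\phi^n}(\zetaG)$ whenever $\rho(\zetaG,\zeta) > R$, so the minimality of $P$ is contradicted either way.
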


Note that this proposition and its corollary imply that the bound in Lemma~\ref{lem:coefficientlemma} is as sharp as one would expect in general. In particular, if the bound grew more slowly, say exponentially of order $n$ rather than order $d^n$, we would have the sets $\MinResLoc(\phi^n)$ converging to $\zetaG$. 

Proposition~\ref{prop:minreslocbd} can also be used to give a lower bound for the Arakelov-Green's function:

\begin{lemma}
Let $R=\frac{2}{d-1}\ordRes(\phi)$. Fix any type I point $x$. For any point $\zeta\in [\zetaG, x]$, we have $$g_{\phi}(\zeta, \zeta) \geq \rho(\zetaG, \zeta) + g_{\phi}(\zetaG, \zetaG) - R\ .$$
\end{lemma}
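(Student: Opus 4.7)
\emph{Plan.} The strategy is to regard the lemma as the $n\to\infty$ limit of the inequality in Proposition~\ref{prop:minreslocbd}. First I would divide the Proposition's estimate by the common normalization $d^{2n}-d^n$, obtaining, for each $n\geq 1$ and $\zeta\in[\zetaG,x]$,
\begin{equation*}
\frac{\ordRes_{\phi^n}(\zeta)-\ordRes_{\phi^n}(\zetaG)}{d^{2n}-d^n}\;\geq\;\rho(\zetaG,\zeta)-R.
\end{equation*}
The goal is then to recognize the left-hand side as the difference $g_{\nu_{\phi^n}}(\zeta,\zeta)-g_{\nu_{\phi^n}}(\zetaG,\zetaG)$ and pass to the limit $n\to\infty$.

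The crux is to establish the identity
\begin{equation*}
\frac{\ordRes_{\phi^n}(\zeta)-\ordRes_{\phi^n}(\zetaG)}{d^{2n}-d^n}\;=\;g_{\nu_{\phi^n}}(\zeta,\zeta)-g_{\nu_{\phi^n}}(\zetaG,\zetaG)
\end{equation*}
along the segment $[\zetaG,x]$. Both quantities are continuous and piecewise affine in $\zeta\in[\zetaG,x]$, so it suffices to compare their one-sided derivatives in the direction towards $x$. Using the Laplacian formula $\Delta_{\widehat{\Gamma_{\text{FR}}}}\ordRes_{\phi^n}=2(d^n-1)(\mu_{\text{Br}}-\nu_{\phi^n})$ together with the decomposition $g_{\nu_{\phi^n}}(y,y)=\rho(y,\zetaG)+2\,u_{\nu_{\phi^n}}(y,\zetaG)+C_n$, I would verify that along the interior of $[\zetaG,x]$ -- where $\mu_{\text{Br}}$ places no mass -- both slopes reduce to the same affine function of the $\nu_{\phi^n}$-mass in the forward direction. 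The two functions then agree up to an additive constant, which drops out of the displayed difference.

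To finish, I would combine the two displays above to obtain
\begin{equation*}
g_{\nu_{\phi^n}}(\zeta,\zeta)-g_{\nu_{\phi^n}}(\zetaG,\zetaG)\;\geq\;\rho(\zetaG,\zeta)-R,
\end{equation*}
and then invoke Corollary~\ref{cor:potentialfunctionconvergences}(2) to let $n\to\infty$: the uniform convergence $g_{\nu_{\phi^n}}\to g_\phi$ yields $g_\phi(\zeta,\zeta)-g_\phi(\zetaG,\zetaG)\geq\rho(\zetaG,\zeta)-R$, which rearranges to the claim. The principal obstacle will be the slope identification in the second step: one needs to handle the case where the segment $[\zetaG,x]$ does not lie entirely in $\widehat{\Gamma_{\text{FR}}}$, and to keep careful track of the additive constants $C_n$ so that they cancel upon subtracting the endpoint values. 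With this identification in hand, the limiting argument is immediate.
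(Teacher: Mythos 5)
Your first and last steps match the paper's intent, but the middle step -- the claimed identity $\frac{1}{d^{2n}-d^n}\bigl(\ordRes_{\phi^n}(\zeta)-\ordRes_{\phi^n}(\zetaG)\bigr)=g_{\nu_{\phi^n}}(\zeta,\zeta)-g_{\nu_{\phi^n}}(\zetaG,\zetaG)$ -- is false, and this is a genuine gap, not a technicality to be ``handled.'' Already for $\phi(z)=z^2$ (good reduction, so $\nu_{\phi^n}=\mu_\phi=\delta_{\zetaG}$ and $g_{\nu_{\phi^n}}(\zeta,\zeta)-g_{\nu_{\phi^n}}(\zetaG,\zetaG)=\rho(\zeta,\zetaG)$): taking $\zeta=\zeta_{a,r}$ with $|a|=1$, $\tilde a\neq 0,1$, a direct resultant computation with the normalized lift of $(\phi^n)^{\gamma}$, $\gamma(z)=a+Az$, gives $\ordRes_{\phi^n}(\zeta_{a,r})=(d^{2n}+d^n)\,\rho(\zetaG,\zeta)$, so the left side of your identity equals $\frac{d^n+1}{d^n-1}\rho(\zetaG,\zeta)$, not $\rho(\zetaG,\zeta)$; the discrepancy is not even bounded in $\zeta$, so the two sides do not agree up to an additive constant either. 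The reason your slope-matching argument cannot work is that the Laplacian identity $\Delta_{\widehat{\Gamma_{\textrm{FR}}}}\ordRes_{\phi^n}=2(d^n-1)(\mu_{\textrm{Br}}-\nu_{\phi^n})$ is only valid on the truncated fixed/repelling tree, and carries the normalization $2(d^n-1)$ rather than $d^{2n}-d^n$; off that tree (which is where a generic segment $[\zetaG,x]$ lives) the slopes of $\ordRes_{\phi^n}$ are governed by counts of zeros and poles of $\phi^n$ in the relevant balls and can be as large as $d^{2n}+d^n$, whereas the slope of $g_{\nu_{\phi^n}}(y,y)$ along the segment is $1\pm 2\nu_{\phi^n}(B_{\vv}(y)^-)$ times the normalization. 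Note also that even a one-sided comparison would not save the argument: to transfer the lower bound of Proposition~\ref{prop:minreslocbd} to the Green's function you need the normalized $\ordRes$-difference to be \emph{at most} the $g_{\nu_{\phi^n}}$-difference, and the $z^2$ example shows the opposite inequality holds there. (A smaller issue: with the paper's definition $u_\nu(z,\zetaG)=-\int\log_v\delta(w,z)_{\zetaG}\,d\nu(w)$, the diagonal decomposition is $g_\nu(y,y)=\rho(y,\zetaG)-2u_\nu(y,\zetaG)+C_\nu$, with a minus sign.)

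The fix is much simpler and is what the paper does: there is no need to pass through $g_{\nu_{\phi^n}}$ at all. By \cite{KJ} Theorem~1, the normalized functions $\frac{1}{d^{2n}-d^n}\ordRes_{\phi^n}(\cdot)$ converge to $g_\phi(\cdot,\cdot)$ itself (not merely up to constants). So fix $\epsilon>0$ and $\zeta\in[\zetaG,x]$, choose $n$ large enough that $\bigl|\frac{1}{d^{2n}-d^n}\ordRes_{\phi^n}(\xi)-g_\phi(\xi,\xi)\bigr|<\epsilon$ for $\xi\in\{\zeta,\zetaG\}$, plug these into the inequality (\ref{eq:ordreslowerbound}) of Proposition~\ref{prop:minreslocbd}, and let $\epsilon\to0$. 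Your outer structure (normalize the Proposition, pass to the limit) is exactly right; replacing the false exact identity by this citation closes the gap, whereas proving the asymptotic comparison you would actually need between $\frac{1}{d^{2n}-d^n}\ordRes_{\phi^n}$ and $g_{\nu_{\phi^n}}$ amounts to reproving \cite{KJ} Theorem~1 combined with Corollary~\ref{cor:potentialfunctionconvergences}(2).
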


\begin{proof}
We use the convergence of the functions $\frac{1}{d^{2n}-d^n}\ordRes_{\phi^n}(x)$ given in \cite{KJ} Theorem 1. Let $\epsilon >0$, and fix $\zeta\in [\zetaG,x]$. We may choose $n$ large enough so that
\begin{align*}
\left|\frac{1}{d^{2n}-d^n} \ordRes_{\phi^n}(\zeta) - g_\phi(\zeta, \zeta)\right| &< \epsilon \\
\left|\frac{1}{d^{2n}-d^n} \ordRes_{\phi^n}(\zetaG) - g_\phi(\zetaG, \zetaG)\right| &< \epsilon \ .
\end{align*}

Combining this with (\ref{eq:ordreslowerbound}), we find 
\begin{align*}
g_\phi(\zeta, \zeta)+\epsilon &\geq \frac{1}{d^{2n}-d^n} \ordRes_{\phi^n}(\zeta)\\
& \geq  \rho(\zetaG, \zeta) - R + \frac{1}{d^{2n}-d^n} \ordRes_{\phi^n}(\zetaG) \\
& \geq \rho(\zetaG, \zeta) -R + g_\phi(\zetaG, \zetaG) - \epsilon\ .
\end{align*}
 Letting $\epsilon \to 0$ gives the result.

\end{proof}

We can apply this to obtain a bound on the barycenter of $\mu_\phi$:

\begin{prop}
Let $R=\frac{2}{d-1} \ordRes(\phi)$ and $m_0 = \min_{x\in \pberk} g_\phi(x,x)$. Then $$\Bary(\mu_\phi) \subseteq B_\rho(\zetaG, R+m_0 - g_\phi(\zetaG))\ .$$ We further have $$\diam(\Bary(\mu_\phi)) \leq 2 (R+m_0 - g_\phi(\zetaG))\ .$$ In particular, if we choose a co\"ordinate system so that $\zetaG\in \Bary(\mu_\phi)$, then $$\Bary(\mu_\phi) \subseteq B_\rho(\zetaG, R)$$ and $$\diam(\Bary(\mu_\phi)) \leq 2R\ .$$ 
\end{prop}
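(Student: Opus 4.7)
The plan is to leverage the preceding lemma, which lower-bounds the two-variable Arakelov-Green's function $g_\phi(\zeta,\zeta)$ along segments $[\zetaG, x]$ emanating from the Gauss point to type I endpoints. Recall that $\Bary(\mu_\phi)$ is characterized as the set of points where $g_\phi(x,x)$ attains its minimum $m_0$. So if $\zeta\in \Bary(\mu_\phi)$, the idea is simply to feed the equality $g_\phi(\zeta,\zeta) = m_0$ into the lower bound and solve for $\rho(\zetaG, \zeta)$.

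First, for any point $\zeta\in \hberk$ one can find a type I point $x\in \mathbb{P}^1(K)$ with $\zeta\in [\zetaG, x]$: the Berkovich tree has type I (and IV) points as endpoints, so extend the geodesic from $\zetaG$ through $\zeta$ out to a type I endpoint (if the endpoint turns out to be type IV, perturb the direction in the tangent space at $\zeta$; since we only need existence, the type I density on $\pberk$ suffices). Applying the preceding lemma to this choice of $x$ and to $\zeta\in \Bary(\mu_\phi)$ gives
\begin{equation*}
m_0 \;=\; g_\phi(\zeta,\zeta) \;\geq\; \rho(\zetaG, \zeta) + g_\phi(\zetaG, \zetaG) - R\ ,
\end{equation*}
which rearranges to $\rho(\zetaG, \zeta)\leq R + m_0 - g_\phi(\zetaG, \zetaG)$. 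Since this holds for every $\zeta\in \Bary(\mu_\phi)$, we get the containment $\Bary(\mu_\phi) \subseteq B_\rho(\zetaG, R+m_0 - g_\phi(\zetaG, \zetaG))$ as desired.

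For the diameter statement I would simply invoke the triangle inequality for $\rho$: given $\zeta_1,\zeta_2\in \Bary(\mu_\phi)$,
\begin{equation*}
\rho(\zeta_1,\zeta_2) \leq \rho(\zeta_1,\zetaG) + \rho(\zetaG,\zeta_2) \leq 2\bigl(R+m_0-g_\phi(\zetaG,\zetaG)\bigr)\ .
\end{equation*}
Finally, the specialized statement follows by observing that if coordinates are chosen so that $\zetaG \in \Bary(\mu_\phi)$, then by the definition of the barycenter we have $g_\phi(\zetaG,\zetaG) = m_0$, so the correction term $m_0 - g_\phi(\zetaG,\zetaG)$ vanishes and both bounds simplify to $R$ and $2R$ respectively.

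I do not anticipate a serious obstacle: the real work is encoded in the preceding lemma (which in turn rests on the convergence $\tfrac{1}{d^{2n}-d^n}\ordRes_{\phi^n}\to g_\phi(\cdot,\cdot)$ from \cite{KJ} Theorem 1 together with Proposition~\ref{prop:minreslocbd}). The only subtlety worth flagging in the writeup is the extension of the segment $[\zetaG,\zeta]$ to a type I endpoint so that the preceding lemma applies verbatim; this is essentially automatic from the tree structure of $\pberk$, but deserves a sentence so the reader does not wonder whether the lemma's hypothesis $\zeta\in[\zetaG,x]$ with $x$ of type I is really met for an arbitrary $\zeta\in \Bary(\mu_\phi)\subseteq \hberk$.
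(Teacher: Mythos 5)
Your proposal is correct and follows essentially the same route as the paper: apply the preceding lower bound for $g_\phi(\zeta,\zeta)$ along a segment $[\zetaG,x]$ at points of $\Bary(\mu_\phi)$, where $g_\phi(\zeta,\zeta)=m_0$, and rearrange to bound $\rho(\zetaG,\zeta)$, with the special case coming from $g_\phi(\zetaG,\zetaG)=m_0$ when $\zetaG\in\Bary(\mu_\phi)$. The only cosmetic difference is that the paper bounds the far endpoint of the barycenter segment while you argue pointwise and get the diameter from the triangle inequality; your added remark on extending $[\zetaG,\zeta]$ to a type I endpoint is a reasonable clarification of a hypothesis the paper leaves implicit.
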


\begin{proof}
Let $R=\frac{2}{d-1}\ordRes(\phi)$, and fix $\epsilon>0$. Let $\Bary(\mu_\phi)$ be the segment $[\zeta_1, \zeta_2]$, and without loss of generality assume $\rho(\zetaG, \zeta_2) \geq \rho(\zetaG, \zeta_1)$. By the preceeding lemma, $$\rho(\zetaG, \zeta_2) \leq g_\phi(\zeta_2, \zeta_2) + R - g_\phi(\zetaG, \zetaG)\ .$$ Since $\zeta_2\in \Bary(\mu_\phi)$ and $g_\phi(x,x)$ is minimized on $\Bary(\mu_\phi)$, this gives $$\rho(\zetaG, \zeta_2) \leq R + m_0 - g_\phi(\zetaG, \zetaG)\ .$$ The last assertion follows from the fact that $g_\phi(\zetaG, \zetaG) = m_0$ if $\zetaG\in \Bary(\mu_\phi)$. 

\end{proof}

\subsection{Multipliers of Periodic Points}

Lemma~\ref{lem:coefficientlemma} can also be used to bound how repelling a type I repelling $n$-periodic point can be. More precisely, we have

\begin{prop}
Let $P$ be a type I repelling $n$-periodic point for $\phi$. Let $\Phi$ be a normalized lift for $\phi$. If $\lambda_P$ is the multiplier of $P$, we have $$|\lambda_P| \leq |\Res(\Phi)|^{-\frac{d^n-1}{d-1}}\ .$$
\end{prop}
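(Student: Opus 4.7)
The plan is to reduce immediately to the case $P=0$ via a well-chosen change of coordinates, and then read off the multiplier directly from the coefficients of a normalized lift of $\phi^n$, applying Lemma~\ref{lem:coefficientlemma} to control the denominator.

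More concretely, first I would choose $\gamma\in\PGL_2(K)$ sending $P$ to $0$, and arrange that $\gamma$ actually lies in $\GL_2(\mathcal{O})$. This is possible: if $|P|\leq 1$, take the translation $z\mapsto z-P$; if $|P|>1$ or $P=\infty$, first apply $z\mapsto 1/z$ and then translate. Conjugation by such a $\gamma$ preserves the multiplier $|\lambda_P|$ (multipliers are $\PGL_2(K)$-invariant), and, since $\gamma\in\GL_2(\mathcal{O})$ stabilizes $\zetaG$, it preserves $\ordRes_\phi(\zetaG)$ and hence $|\Res(\Phi)|$. So after this move I may assume $P=0$.

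Next, with $\Phi^n=[F,G]$ a normalized lift and notation as in Lemma~\ref{lem:coefficientlemma}, the assumption $\phi^n(0)=0$ forces $\alpha_0 = F(0,1)=0$. Then Lemma~\ref{lem:coefficientlemma} gives
\[
|\beta_0| \;=\; \max(|\alpha_0|,|\beta_0|) \;\geq\; |\Res(\Phi)|^{\frac{d^n-1}{d-1}}.
\]
Setting $f(z)=F(z,1)$, $g(z)=G(z,1)$, a direct computation of $(\phi^n)'$ at $z=0$ using $f(0)=0$ yields
\[
\lambda_P \;=\; (\phi^n)'(0) \;=\; \frac{f'(0)\,g(0)-f(0)\,g'(0)}{g(0)^2} \;=\; \frac{\alpha_1}{\beta_0}.
\]
Because $\Phi^n$ is a normalized representative, all its coefficients (including $\alpha_1$) satisfy $|\alpha_1|\leq 1$, so
\[
|\lambda_P| \;=\; \frac{|\alpha_1|}{|\beta_0|} \;\leq\; \frac{1}{|\beta_0|} \;\leq\; |\Res(\Phi)|^{-\frac{d^n-1}{d-1}},
\]
which is the desired bound.

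There is no serious obstacle here; the proof is essentially a bookkeeping exercise once Lemma~\ref{lem:coefficientlemma} is in hand. The one point requiring a small amount of care is ensuring that the coordinate change placing $P$ at the origin lies in $\GL_2(\mathcal{O})$, so that the right-hand side $|\Res(\Phi)|^{-(d^n-1)/(d-1)}$ does not change under conjugation.
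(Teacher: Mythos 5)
Your proposal is correct and follows essentially the same route as the paper: conjugate by an element of $\PGL_2(\mathcal{O})$ to place $P$ at $0$ (which changes neither $|\lambda_P|$ nor $|\Res(\Phi)|$), observe that the constant coefficient of the numerator of a normalized lift of $\phi^n$ vanishes so that Lemma~\ref{lem:coefficientlemma} bounds $|\beta_0|$ from below, and read off $\lambda_P = \alpha_1/\beta_0$ with $|\alpha_1|\leq 1$. The only cosmetic difference is that you compute the multiplier from the homogeneous lift via the quotient rule, whereas the paper works with the dehomogenized normalized pair $f,g$; the content is identical.
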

\begin{proof}
After changing co\"ordinates by an element $\gamma\in \PGL_2(\mathcal{O})$, we may assume that $P=0$. Note that $|\Res(\Phi)|$ is unaffected by this type of conjugation. 

Let $D=d^n$ and $\phi^n(z) = \frac{f(z)}{g(z)}$, where $f(z) = a_D z^D+ ... + a_1 z$, $g(z) = b_D z^D+...+b_1 z+b_0$ are normalized, coprime polynomials representing the $n$th iterate of $\phi$. We have that $|a_1|\leq 1$ and $b_0 \neq 0$. The multiplier $\lambda_P$ is given $$\lambda_P = \frac{a_1}{b_0}\ .$$ By Lemma~\ref{lem:coefficientlemma}, we know $$\frac{1}{|b_0|} \leq |\Res(\Phi)|^{-\frac{d^n-1}{d-1}}\ .$$ Thus, $$|\lambda_P| = \frac{|a_1|}{|b_0|} \leq |\Res(\Phi)|^{-\frac{d^n-1}{d-1}}\ .$$
\end{proof}




\bibliography{EqII}
\bibliographystyle{plain}

\end{document}